\title{Bow Metrics and Hyperbolicity}
\author{Feodor F. Dragan\thanks{ Kent State University, Computer Science Department, Kent, Ohio, USA}
  \and Guillaume Ducoffe\thanks{University of Bucharest, Faculty of Mathematics and Computer Science, and National Institute for Research and Development in Informatics, Romania}
  \and Michel Habib\thanks{IRIF, CNRS \& Université Paris Cité, Paris, France}
  \and Laurent Viennot\thanks{Inria, DI ENS, Paris, France}}
\newcommand{\commentout}[1]{}
\long\def\jump#1\finjump{}
\long\def\beglongversion#1\endlongversion{#1}
\newtheorem{remark}{Remark}
\newtheorem{theorem}{Theorem}
\newtheorem{lemma}{Lemma}
\newtheorem{corollary}{Corollary}
\newtheorem{proposition}{Proposition}
\def\Box{\hbox{\hskip 1pt \vrule width 4pt height 8pt depth 1.5pt \hskip 1pt}}
\newenvironment{proof}{\medskip\noindent\textbf{Proof.}}{{}\hfill$\Box$\\}
\newtheorem{claim}{Claim}
\begin{document}

\maketitle

\begin{abstract} A ($\lambda,\mu$)-bow metric was defined in (Dragan \& Ducoffe, 2023) as a far reaching generalization of an $\alpha_i$-metric (which is equivalent to a ($0,i$)-bow metric). A graph $G=(V,E)$ is said to satisfy 
{\em($\lambda,\mu$)}-bow metric if for every four vertices $u,v,w,x$ of $G$ the following holds:  if two shortest paths $P(u,w)$ and $P(v,x)$ share a common shortest subpath $P(v,w)$  of length more than $\lambda$ (that is, they overlap by more than  $\lambda$),  then the distance between $u$ and $x$ is at least $d_G(u,v)+d_G(v,w)+d_G(w,x)-\mu$. ($\lambda,\mu$)-Bow metric can also be considered for all geodesic metric spaces. It was shown by Dragan \& Ducoffe that every $\delta$-hyperbolic graph (in fact, every  $\delta$-hyperbolic geodesic metric space)  satisfies ($\delta, 2\delta$)-bow metric. Thus, ($\lambda,\mu$)-bow metric  is a common generalization of hyperbolicity and of $\alpha_i$-metric. 
In this paper, we investigate an intriguing question whether ($\lambda,\mu$)-bow metric implies hyperbolicity in graphs. Note that, this is not the case for general geodesic metric spaces as Euclidean spaces satisfy  ($0,0$)-bow metric whereas they have unbounded hyperbolicity. We conjecture that, in graphs, ($\lambda,\mu$)-bow metric indeed implies hyperbolicity and show that our conjecture is true for several large families of graphs. 
  \medskip

\noindent
{\it Keywords:} hyperbolicity; $\alpha_i$-metric;  bow metric; metric graph classes. 
\end{abstract}

\section{Introduction} 
$\delta$-Hyperbolic metric spaces have been defined by M. Gromov \cite{Gromov1987} in 1987 via a simple 4-point condition: for any four
points $u, v, w, x$, the two larger of the distance sums $d(u, v)+
d(w, x), d(u, w) + d(v, x), d(u, x) + d(v, w)$ differ by at most
$2\delta$. They play an important role in geometric group theory, geometry of negatively curved spaces \cite{ABCFLMSS1991,GH-book,Gromov1987}, and have become of interest in several domains of computer science (see. e.g., \cite{AbDr16,ADM14,BoChCa15,delta-hyp-1st,CDEHVX12,Chepoi2018FastEA,ChepoiEst,slimness,Hyp-p-center,Eppstein07,GaLy05,JoLo04,KrLee06,KSN16,Vien,ShTa04,VSuri} and subsection on related work for some details). $\delta$-Hyperbolicity measures, to some extent, the deviation of a metric from a tree metric. Recall that a metric space $(X, d)$ embeds into a tree network (with positive real edge lengths), that is, $d$ is a tree metric, if and only if for any four points $u, v, w, x,$ the two larger of the distance
sums $d(u, v) + d(w, x), d(u, w) + d(v, x), d(u, x) + d(v, w)$ are equal. 
A connected graph $G = (V, E)$ equipped with standard graph metric $d_G$ is $\delta$-hyperbolic if the metric space $(V, d_G)$ is $\delta$-hyperbolic. The smallest value $\delta$ for which $G$ is $\delta$-hyperbolic is called the hyperbolicity $\delta(G)$ of $G$. 

For graphs, in 1986, V. Chepoi introduced a notion of $\alpha_i$-metric \cite{Ch1986-ch,Ch88} via another simple 4-point condition: for any four
vertices $u, v, w, x$, if a shortest path between $u$ and $w$ and a shortest path between $x$ and $v$ share a terminal edge $vw$, then $d_G(u,x)\geq d_G(u,v) + d_G(v,x)-i$. Roughly, gluing together any two shortest paths along a common terminal edge may not necessarily result in a shortest path but yields a ``near-shortest'' path with defect at most $i$. 
A graph is called $\alpha_i$-metric if it satisfies the $\alpha_i$-metric for every four vertices $u,v,w,x$. 
Evidently, every graph is an $\alpha_i$-metric graph for some $i$. Furthermore, several known graph classes are $\alpha_i$-metric for some small values of $i$ (see \cite{Ch1986-ch,Ch88,HHD-03,alpha-centers,alpha-hyperb,WG16,Howorka,YuCh1991}  and subsection on related work). 

Little was known until recent paper \cite{alpha-hyperb} about the relationships between the $\alpha_i$-metric  and the hyperbolicity. Earlier, the authors of~\cite{BaCh-survey} observed that every $0$-hyperbolic graph must be $\alpha_0$-metric, and every $\frac 1 2$-hyperbolic graph must be $\alpha_1$-metric. 
The authors of~\cite{ChChChJa22+} briefly discussed the  hyperbolicity and the $\alpha_i$-metric for geodesic metric spaces (i.e., metric spaces where any two points can be connected by a geodesic/shortest path).  They observed that Euclidean spaces must be $\alpha_0$-metric (because the union of two geodesics, i.e., line segments, that slightly overlap must be a geodesic, i.e., a line segment) whereas they have unbounded hyperbolicity. They also noted that ``for graphs, the links between $\delta$-hyperbolic graphs and graphs with $\alpha_i$-metrics are less clear''. 

Those missing links for graphs were recently clarified in \cite{alpha-hyperb}. It was shown that every  $\alpha_i$-metric graph is $\delta$-hyperbolic for some $\delta\le \frac{3}{2}(i+1)$. Furthermore, the authors of \cite{alpha-hyperb} proved that every  $\alpha_1$-metric graph is 1-hyperbolic and conjectured that the right upper bound for the hyperbolicity of an $\alpha_i$-metric graph might be $f(i) = \frac{i+1}{2}$, that would be sharp. They also noted  that, for any positive integer $i$, there exists a $1$-hyperbolic graph that is not $\alpha_i$-metric; the $(1\times n)$-rectilinear grid has hyperbolicity 1 and is $\alpha_i$-metric only for $i=\Omega(n)$. Paper \cite{alpha-hyperb} additionally proposed a far reaching generalization of the $\alpha_i$-metric. This new metric, called in \cite{alpha-hyperb}  ($\lambda,\mu$)-bow metric, generalizes also the hyperbolicity. A graph $G=(V,E)$ is said to satisfy 
{\em($\lambda,\mu$)}-bow metric if for every four vertices $u,v,w,x$ of $G$ the following holds: 
\begin{description} 
\item[($\lambda,\mu$)-bow metric:] if two shortest paths $P(u,w)$ and $P(v,x)$ share a common shortest \\ subpath $P(v,w)$  
of length more than $\lambda$ (that is, they overlap by more than  $\lambda$),  then \\ the distance between $u$ and $x$ is at least $d_G(u,v)+d_G(v,w)+d_G(w,x)-\mu$. 
\end{description} 

\noindent 
 Clearly, $\alpha_i$-metric graphs satisfy ($0,i$)-bow metric. 
Furthermore, this generalization is more robust to some graph operations. For instance, the $1$-subdivision of a ($\lambda,\mu$)-bow metric graph must satisfy ($2\lambda+2,2\mu$+2)-bow metric (see Lemma \ref{lm:1-subdiv}). 
This notion of ($\lambda,\mu$)-bow metric can also be considered for all geodesic metric spaces. In \cite{alpha-hyperb}, it was shown that every $\delta$-hyperbolic graph (in fact, every  $\delta$-hyperbolic geodesic metric space)  satisfies ($\delta, 2\delta$)-bow metric. Thus, ($\lambda,\mu$)-bow metric  is a common generalization of hyperbolicity and of $\alpha_i$-metric. We believe that the study of ($\lambda,\mu$)-bow metrics could help in deriving new properties of $\alpha_i$-metric graphs and $\delta$-hyperbolic graphs. 

In this paper, we investigate an intriguing question whether ($\lambda,\mu$)-bow metric implies hyperbolicity in graphs. Note that, this is not the case for general geodesic metric spaces as Euclidean spaces satisfy  ($0,0$)-bow metric whereas they have unbounded hyperbolicity. We conjecture that, in graphs, ($\lambda,\mu$)-bow metric indeed implies hyperbolicity and show that our conjecture is true for several large families of graphs.

     \paragraph{Additional related work on $\alpha_i$-metrics and hyperbolicity.}

The $\alpha_i$-metric property was introduced by V. Chepoi in~\cite{Ch1986-ch,Ch88} and was further investigated in~\cite{HHD-03,WG16,YuCh1991}. It was shown that all chordal graphs~\cite{Ch1986-ch} and all plane triangulations with inner vertices of degree at least seven~\cite{WG16} are $\alpha_1$-metric. 
All distance-hereditary graphs~\cite{YuCh1991}, and even more strongly, all HHD-free graphs~\cite{HHD-03}, are $\alpha_2$-metric. The $\alpha_0$-metric graphs are exactly the ptolemaic graphs, {\it i.e.} the chordal distance-hereditary graphs~\cite{Howorka}. Chepoi and Yushmanov in \cite{YuCh1991} also provided a characterization of all $\alpha_1$-metric graphs. They are exactly the graphs where all balls are convex and a specific isometric subgraph is forbidden.  
Recently, additional properties of $\alpha_1$-metric graphs and $\alpha_i$-metric graphs ($i \in {\cal N}$) were reported in~\cite{alpha-centers,alpha-hyperb,DrGu21,WG16}.
In~\cite{alpha-centers}, the first algorithmic applications of $\alpha_i$-metric graphs to classical distance problems, such as diameter, radius and all eccentricities computations, were presented. 
More specifically, all vertex eccentricities in an $\alpha_i$-metric graph can be approximated in linear time up to some additive term in ${\cal O}(i)$. Furthermore, there exists a subquadratic-time algorithm for exact computation of the radius of an $\alpha_1$-metric graph.

Hyperbolicity was introduced by Gromov in his study on automatic groups~\cite{Gromov1987}. 
Since then, the study of $\delta$-hyperbolic graphs has become an important topic in Metric Graph Theory~\cite{BaCh-survey}.
This parameter has attracted further attention in Network Science, both as a way to better classify complex networks~\cite{AbDr16,KSN16} and to explain some of their properties such as core congestion~\cite{Chepoi:2017:CCI:3039686.3039835}. Many real-world networks have small hyperbolicity~\cite{AbDr16,KSN16,Vien}.  
See also~\cite{ADM14,BoChCa15,slimness,JoLo04} for other related results on the hyperbolicity. 
Several approaches have been proposed in order to upper bound the hyperbolicity in some graph classes~\cite{obstructions,KoMo02,DBLP:journals/combinatorics/WuZ11}. 
In particular, chordal graphs are $1$-hyperbolic, and the chordal graphs with hyperbolicity strictly less than one can be characterized with two forbidden isometric subgraphs~\cite{uea21813}.
The $0$-hyperbolic graphs are exactly the block graphs, {\it i.e.} the graphs such that every biconnected component is a clique~\cite{Hov79}.
Characterizations of $\frac 1 2$-hyperbolic graphs were given in~\cite{BaCh-1-hyp,CoDu14}.
Furthermore, the algorithmic applications of $\delta$-hyperbolic graphs have been studied much earlier than for $\alpha_i$-metric graphs~\cite{DBLP:journals/dcg/ChalopinCDDMV21,delta-hyp-1st,CDEHVX12,Chepoi2018FastEA,Hyp-p-center,GaLy05,KrLee06,ShTa04}.  
In~\cite{delta-hyp-1st,Chepoi2018FastEA} it was proved that all vertex eccentricities in a $\delta$-hyperbolic graph can be approximated in linear time up to some additive term in ${\cal O}(\delta)$. 

    \paragraph{Our contributions.}
We prove that our conjecture that {\em ($\lambda,\mu$)-bow metric implies hyperbolicity} is true for major graph classes known in Metric Graph Theory, namely, for meshed graphs, for graphs with convex balls, and for generalized Helly graphs (see Section \ref{Sec:imply-hyp}). Note that the class of meshed graphs alone contains basis graphs of matroids and of even $\Delta$-matroids,   1-generalized Helly graphs, weakly modular graphs, as well as the graphs in which all median sets induce connected or isometric subgraphs. Weakly modular graphs in turn contain such classical graph classes from Metric Graph Theory as modular graphs, pseudo-modular graphs, pre-median graphs, weakly median graphs, quasi-median graphs, dual polar graphs, median graphs, distance-hereditary graphs, bridged graphs, Helly graphs, chordal graphs, dually chordal graphs and  many others (see survey~\cite{BaCh-survey} for Metric Graph Theory and definitions of those graph classes). In Section \ref{Sec:imply-hyp}, we show also that our conjecture is true for all graphs in which side lengths of metric triangles are bounded or interval thinness is bounded. In Section \ref{Sec:graphclasses}, as a warm up, 
we demonstrate that many known in literature graph classes satisfy $(\lambda,\mu)$-bow metric for some bounded values of $\lambda$ and $\mu$. Additionally to graphs with hyperbolicity $\delta$, precise bounds are given for graphs with bounded slimness, with bounded tree-length, $k$-chordal graphs, AT-free graphs, and others  (see Table \ref{table:hb_sl_bow}).

\begin{table} [htbp]
	\centering
	\begin{tabular}{|l|l|l|l|}
		\hline
		Graph Class                & Hyperbolicity & Slimness & Bow Metric\\ \hline\noalign{\smallskip}
  graphs with hyperbolicity $\delta$ & $=\delta$ & $\le 3\delta+\frac{1}{2}$ \cite{approx-hb}  & ($\delta, 2\delta$)-bow \cite{alpha-hyperb}\\ \hline
\noalign{\smallskip}	
 graphs with slimness $\varsigma$ & $\le 2\varsigma +\frac{1}{2}$ \cite{Soto} & $=\varsigma$ & ($\varsigma, 2\varsigma$)-bow  [here]\\ \hline	
\noalign{\smallskip}	
		graphs with tree-length $\lambda$ & $\leq\lambda$\cite{{delta-hyp-1st}} & $\leq\lfloor\frac{3}{2}\lambda\rfloor$ \cite{diestel2012connectedTW,slimness} & ($\lambda, 2\lambda$)-bow  [here] \\ \hline\noalign{\smallskip}	
  k-chordal graphs ($k\ge 4$) & $\leq\lfloor\frac{k}{2}\rfloor/2$\cite{DBLP:journals/combinatorics/WuZ11} &$\leq\lfloor\frac{k}{4}\rfloor+1$ \cite{bermudo2016hyperbolicity,slimness} & ($\lfloor\frac{k}{4}\rfloor, \lfloor\frac{k}{2}\rfloor$)-bow  [here] \\ \hline
\noalign{\smallskip}	
		AT-free graphs     & $\leq1$   \cite{DBLP:journals/combinatorics/WuZ11}          & $\leq1$ \cite{slimness}     &  ($1,2$)-bow  [here] \\ \hline
\noalign{\smallskip}	
		HHD-free  graphs    &      $\leq1$  \cite{DBLP:journals/combinatorics/WuZ11}             & $\leq1$  \cite{slimness}     & ($0,2$)-bow \cite{HHD-03}  \\  \hline		
\noalign{\smallskip}	  
  Chordal graphs            & $\leq1$ \cite{uea21813}            & $\leq1$ \cite{slimness}     & ($0,1$)-bow \cite{Ch1986-ch,Ch88}  \\  \hline
\noalign{\smallskip}	
		              Distance-Hereditary graphs & $\leq1$ \cite{DBLP:journals/combinatorics/WuZ11}               &$\leq1$  \cite{slimness}   &   ($0,2$)-bow \cite{YuCh1991}   \\ \hline
\noalign{\smallskip}	
		              Ptolemaic graphs & $\leq1/2$ \cite{BaCh-1-hyp}               &$\leq1$  \cite{slimness}   &   ($0,0$)-bow \cite{Howorka}   \\ \hline     \end{tabular}
	\caption{Hyperbolicity, slimness and bow metric for various  graph classes.}
	\label{table:hb_sl_bow}
\end{table}

  \section{Preliminaries} 
For any undefined graph terminology, see~\cite{BoMu08}.
In what follows, we only consider graphs $G=(V,E)$ that are finite, undirected, unweighted, simple and connected. 

The {\em distance} $d_G(u,v)$ between two vertices $u,v \in V$ is the minimum length (number of edges) of a path between $u$ and $v$ in $G$. The {\em ball} with center $v$ and radius $r$ is defined as $B(v,r) = \{u \in V : d_G(u,v) \le r\}$. For any subset $S\subseteq V$, let $B(S,r)= \{u \in V : d_G(u,S)\le r\}$, where $d_G(u,S):=\min\{d_G(u,v): v\in S\}$,  denote the ball around $S$ with radius $r$ (equivalently, the {\em $r$-neighborhood} of $S$).  

The {\em interval} $I_G(u,v)$ between $u$ and $v$ contains every vertex on a shortest $(u,v)$-path, {\it i.e.}, $I_G(u,v) = \{ w \in V : d_G(u,v) = d_G(u,w) + d_G(w,v)\}$.
Let also $I_G^o(u,v)=I_G(u,v)\setminus \{u,v\}$. We will omit the subscript if $G$ is clear from the context. 
For every integer $k$ such that $0 \le k \le d(u,v)$, we define the {\em slice} $S_k(u,v) = \{x \in I(u,v) : d(u,x) = k\}$.
Let $\tau(u,v)$ denote the maximum diameter of a slice between $u$ and $v$: $\tau(u,v) = \max_{0 \le k \le d(u,v)}\max\{d_G(x,y): x,y \in S_k(u,v)\}$.
We define the {\em interval thinness} of $G$ as $\tau(G) = \max\{\tau(u,v) : u,v \in V\}$. We say that the intervals of a graph $G$
are {\em $p$-thin} if $\tau(G)\le p$. 
It is easy to prove that every $\delta$-hyperbolic graph has interval thinness at most $2\delta$.
Conversely, odd cycles are examples of graphs with interval thinness equal to zero (they are so-called ``geodetic graphs'') but unbounded hyperbolicity.
However, let the {\em $1$-subdivision} graph $\Sigma(G)$ of a graph $G$ be obtained by replacing all its edges $e=uv$ by internally vertex-disjoint paths $[u,e,v]$ of length two.
Papasoglu~\cite{Pap95} proved that the hyperbolicity of $G$ is at most doubly exponential in the interval thinness of $\Sigma(G)$.


Three vertices $x$, $y$, and  $z$ of a graph $G$ form a {\em metric
triangle} $xyz$ if the intervals $I(x,y)$, $I(y,z)$ and $I(x,z)$ pairwise intersect only
in the common end vertices. The pairs $xy$, $xz$, and $yz$  are called 
the {\em sides} of  $xyz$. The integer $k:=\max\{d(x,y),d(y,z),$ $d(z,x)\}$ is called the {\em  maximum side-length} of the triangle. If $d(x,y)= d(y,z) = d(z,x) = k$, then this metric triangle is called {\em equilateral of size} $k$. A metric triangle $xyz$ has type ($k_1,k_2,k_3$) if its sides have lengths $k_1$, $k_2$, $k_3$ and
$k_1\ge k_2\ge k_3$.

Given a triple $u,v,w$, a {\em quasi-median} of $u,v,w$ is a metric triangle $u'v'w'$ such that:
  \begin{itemize}
   \item[]  $d(u,v) = d(u,u') + d(u',v') + d(v',v);$ 
   \item[]  $d(v,w) = d(v,v') + d(v',w') + d(w',w);$
  \item[]  $d(w,u) = d(w,w') + d(w',u') + d(u',u).$
\end{itemize}
If $u', v',$ and $w'$ are the same vertex $z$, or equivalently, if the size 
of $u'v'w'$ is zero, then this vertex $z$ is called a median of $u,v,w$. A median may not exist and may not be unique. On the other hand, every triple has at least one quasi-median ({\it e.g.}, see~\cite{BaCh-survey}).

A {\em geodesic triangle} $\Delta(u,v,w) = P(u,v) \cup P(v,w) \cup P(w,u)$ is the union of a shortest $(u,v)$-path $P(u,v)$, a shortest $(v,w)$-path $P(v,w)$ and a shortest $(w,u)$-path $P(w,u)$. Note that $P(u,v),P(v,w),P(w,u)$ are called the {\em sides of the triangle}, and they may not be disjoint. 
A geodesic triangle $\Delta(u,v,w)$ is called {\em $\varsigma$-slim} if for any vertex $x\in V$ on any side $P(u,v)$ the distance from $x$ to $P(u,w)\cup P(w,v)$ is at most $\varsigma$, i.e., each path is contained in the union of the $\varsigma$-neighborhoods of the two others. A graph $G$ is called {\em $\varsigma$-slim}, if all geodesic triangles in $G$ are $\varsigma$-slim. The smallest value $\varsigma$ for which $G$ is $\varsigma$-slim is called the {\em slimness} $\varsigma(G)$ of $G$. It is known that the hyperbolicity of a graph and its slimness are within constant factors from each other. 

\begin{proposition} [\cite{approx-hb,Soto}] \label{prop:sl-vs-hb} The following inequalities are true between the hyperbolicity $\delta$ and the slimness $\varsigma$ of a graph: $\delta \le 2\varsigma + \frac{1}{2}$ and   $\varsigma\le 3\delta+\frac{1}{2}$. 
\end{proposition}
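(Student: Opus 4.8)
The plan is to prove the two inequalities separately, in both directions working with the Gromov product $(y|z)_x := \tfrac12\big(d_G(x,y)+d_G(x,z)-d_G(y,z)\big)$ and with the standard fact that the $4$-point condition defining $\delta$-hyperbolicity is equivalent to the inequality $(x|z)_w \ge \min\{(x|y)_w,(y|z)_w\}-\delta$ holding for every quadruple $w,x,y,z$. I will repeatedly use that in an unweighted graph every distance, and hence every Gromov product, is a half-integer; rounding a half-integer Gromov product to an integer vertex position is exactly where the additive $\tfrac12$ terms will arise.

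For the bound $\varsigma \le 3\delta+\tfrac12$, I would fix a geodesic triangle $\Delta(x,y,z)$, a vertex $p$ on the side $P(x,y)$, and set $\alpha=(y|z)_x$. If $d_G(x,p)\le\alpha$ I project $p$ onto $P(x,z)$; otherwise (equivalently, when $d_G(y,p)<(x|z)_y$) I project onto $P(y,z)$, so by symmetry it suffices to treat the first case. Since $p\in P(x,y)$ one has $(p|y)_x=d_G(x,p)$, so a single application of the Gromov-product inequality along $p,y,z$ gives $(p|z)_x\ge\min\{d_G(x,p),\alpha\}-\delta=d_G(x,p)-\delta$. Letting $q$ be the vertex of $P(x,z)$ at integer distance $\lceil(p|z)_x\rceil$ from $x$, a second application along $p,z,q$ bounds $d_G(p,q)$ by $\big(d_G(x,p)-(p|z)_x\big)+2\delta+\tfrac12\le 3\delta+\tfrac12$, the $\tfrac12$ being the rounding error. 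Hence $d_G\big(p,P(x,z)\cup P(y,z)\big)\le 3\delta+\tfrac12$. This is the clean direction; the only subtlety is choosing $q$ at an integer distance to absorb the half-integrality of $(p|z)_x$.

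For the reverse bound $\delta\le 2\varsigma+\tfrac12$ I would argue geometrically from slimness. Given four vertices, relabel them $u,v,w,x$ so that $d_G(u,w)+d_G(v,x)$ is the largest of the three distance sums, and view it as the pair of diagonals of the geodesic quadrilateral with sides $P(u,v),P(v,w),P(w,x),P(x,u)$; the statement to prove is exactly $S_1-S_2\le 4\varsigma+1$. Taking the midpoint $m$ of the diagonal $P(u,w)$ and applying $\varsigma$-slimness to the two triangles $\Delta(u,v,w)$ and $\Delta(u,w,x)$, both of which have $P(u,w)$ as a side, places $m$ within $\varsigma$ of one side of each triangle. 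When these two sides form a pair of \emph{opposite} sides of the quadrilateral, the triangle inequality together with $d_G(u,m)+d_G(m,w)=d_G(u,w)$ yields directly $d_G(u,w)+d_G(v,x)\le\big(d_G(u,v)+d_G(w,x)\big)+4\varsigma$ (or the symmetric side pair), which is even stronger than required.

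The main obstacle is the degenerate configurations in which $m$ is $\varsigma$-close to two sides meeting at a common vertex (to $P(u,v)$ and $P(u,x)$, or to $P(v,w)$ and $P(w,x)$): there the estimate bounds the diagonal sum only by a pair of sides incident to $u$ or to $w$, which need not be one of the two competing side-pair sums. To handle this I would sweep $m$ along $P(u,w)$: near $u$ one is necessarily in the bad case at $u$ and near $w$ in the bad case at $w$, so a discrete intermediate-value argument produces either a point in a good (opposite-pair) case, or a point $m$ lying within $\varsigma$ of a triangle vertex, the latter forcing the diagonal to pass near $v$ or $x$ and collapsing the configuration (in which case $S_1-S_2$ is small directly). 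Tracking the parity of $d_G(u,w)$ in the choice of midpoint is what converts the generic $4\varsigma$ defect into the sharp $4\varsigma+1$, i.e. $\delta\le 2\varsigma+\tfrac12$. This degenerate-case analysis, rather than the generic estimate, is where the real work and the exact additive constant of \cite{approx-hb,Soto} lie.
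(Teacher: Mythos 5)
The paper does not prove this proposition at all: it is imported verbatim from \cite{approx-hb,Soto}, so there is no in-paper argument to compare yours against, and your attempt has to stand on its own merits.

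Your proof of $\varsigma\le 3\delta+\frac{1}{2}$ does stand. Since $p\in P(x,y)$ gives $(p|y)_x=d_G(x,p)$, in the case $d_G(x,p)\le(y|z)_x$ the Gromov-product inequality yields $(p|z)_x\ge d_G(x,p)-\delta$, and for the vertex $q\in P(x,z)$ with $d_G(x,q)=\lceil (p|z)_x\rceil$ (which exists because $(p|z)_x\le d_G(x,z)$) a second application gives $d_G(p,q)=d_G(x,p)+d_G(x,q)-2(p|q)_x\le \bigl(d_G(x,p)-(p|z)_x\bigr)+2\delta+\frac{1}{2}\le 3\delta+\frac{1}{2}$; the case $d_G(x,p)>(y|z)_x$ is symmetric. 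This part is complete and correct.

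The proof of $\delta\le 2\varsigma+\frac{1}{2}$ has a genuine gap, and it sits exactly where you yourself defer the work. Two concrete problems. First, the dichotomy you invoke for the degenerate case is false. If no vertex of $P(u,w)$ is $\varsigma$-close to an opposite pair of sides, slimness of the two triangles only says that the sets $T_u$ (vertices $\varsigma$-close to both $P(u,v)$ and $P(u,x)$) and $T_w$ (vertices $\varsigma$-close to both $P(v,w)$ and $P(w,x)$) cover $P(u,w)$, with $u\in T_u$ and $w\in T_w$; the discrete sweep then produces two \emph{adjacent} vertices $m\in T_u$, $m'\in T_w$, and nothing more. In particular no vertex need lie within $\varsigma$ of $v$ or of $x$: a vertex can be $\varsigma$-close to both $P(u,v)$ and $P(v,w)$ while arbitrarily far from their common endpoint $v$, since two geodesics ending at $v$ may fellow-travel far from $v$. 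So the "collapsing" alternative simply does not occur, and the sweep cannot be said to terminate in one of your two cases. Second, even granting the transition pair $m,m'$, the bound it yields is too weak. Taking $a\in P(u,v)$ and $e\in P(w,x)$ with $d_G(m,a)\le\varsigma$ and $d_G(m',e)\le\varsigma$, one gets $d_G(u,w)=d_G(u,m)+1+d_G(m',w)\le d_G(u,a)+d_G(e,w)+2\varsigma+1$ and $d_G(v,x)\le d_G(v,a)+d_G(a,m)+1+d_G(m',e)+d_G(e,x)\le d_G(v,a)+d_G(e,x)+2\varsigma+1$, hence $d_G(u,w)+d_G(v,x)\le d_G(u,v)+d_G(w,x)+4\varsigma+2$, i.e.\ only $\delta\le 2\varsigma+1$. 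The defect of $+2$ rather than $+1$ arises because the edge $mm'$ is unavoidably traversed in both estimates; it has nothing to do with the parity of $d_G(u,w)$ or with how a midpoint is rounded, so your closing parity remark cannot repair it. Shaving that last $+1$ --- exactly the difference between what your argument gives and what the proposition asserts --- is the actual content of Soto's result, and it is missing here.
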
 

Using the notion of interval, we can reformulate the definition of a {\em($\lambda,\mu$)}-bow metric. A graph $G=(V,E)$ is said to satisfy 
{\em($\lambda,\mu$)}-bow metric if for every four vertices $u,v,w,x$ of $G$ the following holds: 
\begin{description} 
\item[($\lambda,\mu$)-bow metric:] if $v\in I(u,w), w\in I(v,x)$ and $d(v,w)>\lambda$,  then \\ $~~~~~~~~~~~~~~~~~~~~~~d(u,x)\ge d(u,v)+d(v,w)+d(w,x)-\mu$. 
\end{description} 

Other notations, terminology, and important graph classes are locally defined at appropriate places throughout the paper.

\section{Graph classes and 
($\lambda,\mu)$-bow metric} \label{Sec:graphclasses}
 Since $\alpha_i$-metric graphs are exactly the graphs satisfying  ($0,i$)-bow metric, we know that ptolemaic graphs (i.e., graphs which are chordal as well as distance-hereditary)  are exactly the graphs with ($0,0$)-bow metric (see \cite{Howorka}), chordal graphs satisfy ($0,1$)-bow metric (see \cite{Ch1986-ch,Ch88}), HHD-free graphs as well as distance-hereditary graphs satisfy ($0,2$)-bow metric (see \cite{HHD-03} and \cite{YuCh1991}). The graphs with ($0,1$)-bow metric were characterized in \cite{YuCh1991} as the graphs with convex balls and without one specific isometric subgraph. Recall that, a {\em distance-hereditary graph} is a graph where each induced path is a shortest path, a graph $G$ is {\em chordal} if every induced cycle of $G$ has length 3.  A graph containing no induced domino or house or a hole (i.e., an induced cycle of length greater that 4) is called a {\em House-Hole-Domino--free} graph (HHD-free graph, for short). 
 
 We can show that various additional graph classes satisfy a ($\lambda,\mu$)-bow metric for some small values of $\lambda$ and $\mu$. 
First we formulate an important result from \cite{alpha-hyperb} and prove its analog 
for graphs with slimness $\varsigma$. 

\begin{proposition} [\cite{alpha-hyperb}] \label{prop:bow-metric}
Every $\delta$-hyperbolic graph and, generally, every   $\delta$-hyperbolic geodesic metric space satisfies ($\delta, 2\delta$)-bow metric. 
\end{proposition}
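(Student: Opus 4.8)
The plan is to turn the bow-metric conclusion into a comparison among the three Gromov distance sums of the quadruple $u,v,w,x$ and then apply the four-point condition directly. First I would record the two collinearity hypotheses as exact distance identities: since $v\in I(u,w)$ we have $d(u,w)=d(u,v)+d(v,w)$, and since $w\in I(v,x)$ we have $d(v,x)=d(v,w)+d(w,x)$. Abbreviating $a=d(u,v)$, $b=d(v,w)$, $c=d(w,x)$, the three distance sums of $u,v,w,x$ become
\[
S_1=d(u,v)+d(w,x)=a+c,\quad S_2=d(u,w)+d(v,x)=a+2b+c,\quad S_3=d(u,x)+d(v,w)=d(u,x)+b.
\]

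Next I would identify $S_2$ as the largest of the three sums. Indeed $S_2-S_1=2b\ge 0$, while the triangle inequality $d(u,x)\le a+b+c$ gives $S_3=d(u,x)+b\le a+2b+c=S_2$. The inequality we want to establish, namely $d(u,x)\ge d(u,v)+d(v,w)+d(w,x)-2\delta$, is just a rewriting of $S_2-S_3\le 2\delta$, so it suffices to show that $S_3$ is the \emph{second} largest sum and that it is within $2\delta$ of $S_2$.

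The crux is the following use of the hypothesis $b=d(v,w)>\lambda=\delta$. The four-point condition asserts that the two larger of $S_1,S_2,S_3$ differ by at most $2\delta$. Since $S_2-S_1=2b>2\delta$, the sum $S_1$ cannot be one of the two larger sums (otherwise the gap between the two larger sums would already exceed $2\delta$). Hence the two larger sums are precisely $S_2$ and $S_3$, and $\delta$-hyperbolicity yields $S_2-S_3\le 2\delta$, which is exactly the desired bound. This strict inequality $b>\delta$ is the one genuine idea, and it is the only place where any subtlety arises: with mere $b\ge\delta$ one could not exclude $S_1$ from being tied for largest, so the strictness in the bow-metric hypothesis is what makes the argument go through.

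Finally I would observe that every ingredient above—the two distance identities coming from points lying on geodesics, the triangle inequality, and the four-point condition—holds verbatim in an arbitrary geodesic metric space, with $v\in I(u,w)$ and $w\in I(v,x)$ reinterpreted as $v$ lying on a geodesic from $u$ to $w$ and $w$ on a geodesic from $v$ to $x$. Thus the same computation proves the statement for every $\delta$-hyperbolic geodesic metric space, not just for graphs, and no separate argument for the general case is needed.
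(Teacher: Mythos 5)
Your proof is correct. Note first that this paper does not actually prove Proposition~\ref{prop:bow-metric}: it imports the result from \cite{alpha-hyperb}, so the only in-paper argument to compare against is the proof of its slimness analogue, Proposition~\ref{prop:bow-metric-sl}. That proof takes a different route: it considers a geodesic triangle $\Delta(u,v,x)$ with $w$ on the side $P(v,x)$, uses $d(v,w)>\varsigma$ to rule out any point of $P(u,v)$ being $\varsigma$-close to $w$, so that slimness forces a point $y\in P(u,x)$ with $d(w,y)\le\varsigma$, and then finishes with two triangle inequalities. Your argument is instead the natural direct one for hyperbolicity: you convert the two interval hypotheses into exact distance identities, observe that $S_2=d(u,w)+d(v,x)$ is always the (weakly) largest Gromov sum, and use $d(v,w)>\delta$ to show $S_1=d(u,v)+d(w,x)$ cannot be among the two larger sums, so the four-point condition forces $S_2-S_3\le 2\delta$, which is precisely the desired inequality. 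This buys two things: it requires no geodesics at all beyond interpreting the hypotheses (so the extension to $\delta$-hyperbolic geodesic metric spaces is immediate, as you note), and it makes the dependence on the strictness of $d(v,w)>\delta$ transparent --- your remark that with $d(v,w)=\delta$ the sum $S_1$ could be tied for second largest, leaving $d(u,x)$ unbounded below, correctly identifies the one place where the hypothesis does real work. The slimness proof, by contrast, genuinely needs sides of a geodesic triangle and would not transfer to a four-point formulation this cleanly.
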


From Proposition \ref{prop:bow-metric} and Proposition \ref{prop:sl-vs-hb}, it already follows that  the graphs with slimness $\varsigma$ satisfy a bow metric. Below, we give a direct proof with sharper bounds. 

\begin{proposition}  \label{prop:bow-metric-sl}
Every graph and, generally, every geodesic metric space whose geodesic triangles are $\varsigma$-slim satisfies ($\varsigma, 2\varsigma$)-bow metric. 
\end{proposition}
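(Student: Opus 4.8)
The plan is to produce a single point on a shortest $(u,x)$-path that lies close to $w$, and then finish by two applications of the triangle inequality. First I would unfold the hypotheses of the reformulated bow-metric condition: $v\in I(u,w)$ means $d(u,w)=d(u,v)+d(v,w)$, so concatenating a shortest $(u,v)$-path $P(u,v)$ with the common overlap $P(v,w)$ (a shortest $(v,w)$-path) produces a genuine shortest $(u,w)$-path; symmetrically, $w\in I(v,x)$ means that the shortest $(v,x)$-path $P(v,x)$ passes through $w$, having $P(v,w)$ as its initial segment.

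Next I would set up the geodesic triangle $\Delta(u,v,x)$ whose three sides are exactly $P(u,v)$, the given $P(v,x)$ (so that $w$ is a point sitting on this side), and an arbitrary shortest $(u,x)$-path $P(u,x)$. Applying $\varsigma$-slimness to the point $w$, which lies on the side $P(v,x)$, yields a point at distance at most $\varsigma$ from $w$ lying on $P(u,v)\cup P(u,x)$.

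The crux of the argument is to rule out that this nearby point lies on $P(u,v)$, and this is precisely where the overlap hypothesis $d(v,w)>\varsigma$ enters. Indeed, since $P(u,v)\cup P(v,w)$ is a shortest $(u,w)$-path, every point $a$ on the side $P(u,v)$ satisfies $d(a,w)=d(a,v)+d(v,w)\ge d(v,w)>\varsigma$; hence no point of $P(u,v)$ can be within $\varsigma$ of $w$. Consequently the point guaranteed by slimness must lie on the third side: there is some $m\in P(u,x)$ with $d(w,m)\le\varsigma$.

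Finally I would conclude by exploiting that $m$ lies on a shortest $(u,x)$-path, so $d(u,x)=d(u,m)+d(m,x)$, and bound each term via the triangle inequality together with $d(w,m)\le\varsigma$: namely $d(u,m)\ge d(u,w)-\varsigma=d(u,v)+d(v,w)-\varsigma$ and $d(m,x)\ge d(w,x)-\varsigma$, whose sum gives $d(u,x)\ge d(u,v)+d(v,w)+d(w,x)-2\varsigma$, as required. The only genuine obstacle is the case distinction in the slimness step, but it dissolves at once upon noticing that an overlap strictly longer than $\varsigma$ forces $w$ to be far from the \emph{entire} side $P(u,v)$; the general geodesic metric space version is verbatim the same, replacing each occurrence of the word vertex by point.
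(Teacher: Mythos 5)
Your proof is correct and follows essentially the same route as the paper's: form the geodesic triangle $\Delta(u,v,x)$ with $w$ on the side $P(v,x)$, use slimness together with the overlap condition $d(v,w)>\varsigma$ to exclude a nearby point on $P(u,v)$ and thus obtain a point $m\in P(u,x)$ with $d(w,m)\le\varsigma$, then conclude by the triangle inequality applied to $d(u,x)=d(u,m)+d(m,x)$. The only (immaterial) difference is that you rule out points of $P(u,v)$ directly via additivity of distances along the concatenated geodesic, whereas the paper does so by a short contradiction with $d(u,w)>d(u,v)+\varsigma$.
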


\begin{proof} 
Consider arbitrary four vertices $u,v,w,x$ such that $v\in I(u,w), w\in I(v,x)$ and $d(v,w)>\varsigma$. We need to show that $d(u,x)\ge d(u,v)+d(v,w)+d(w,x)-2\varsigma.$  Consider a geodesic triangle  $\Delta(u,v,x)$ of $G$ formed by shortest paths $P(u,x)$,
$P(u,v)$, and $P(v,x)$ containing vertex $w$. We know that $d(u,w)=d(u,v)+d(v,w)>d(u,v)+\varsigma$. If there is a vertex $w'\in P(u,v)$ such that $d(w,w')\le \varsigma$, then $d(u,w)\le d(u,w')+d(w',w)\le d(u,v)+\varsigma$, contradicting with $d(u,w)>d(u,v)+\varsigma$. Thus, there must exist a vertex $y$ in $P(u,x)$ with $d(w,y)\le \varsigma$ by $\varsigma$-slimness of the triangle. But then, from $d(u,v)+d(v,w)=d(u,w)\le d(u,y)+d(y,w)\le d(u,y)+\varsigma$ we get $d(u,y)\ge d(u,v)+d(v,w)-\varsigma$. Additionally, from $d(x,w)\le d(x,y)+d(y,w)\le d(x,y)+\varsigma$ we get $d(x,y)\ge d(x,w)-\varsigma$. Hence, $d(u,x)=d(u,y)+d(x,y)\ge d(u,v)+d(v,w)-\varsigma+d(x,w)-\varsigma= d(u,v)+d(v,w)+d(w,x)-2\varsigma.$ 
\end{proof}

Note that  the results of Proposition \ref{prop:bow-metric} and Proposition \ref{prop:bow-metric-sl} are rather sharp. 
$(\delta\times \delta)$-Rectilinear grid has hyperbolicity $\delta$, slimness $\delta$, and satisfies $(\delta-1,2\delta)$-bow metric but neither $(\delta-1,2\delta-1)$-bow metric nor $(\delta,2\delta)$-bow metric. 
\medskip

From Proposition \ref{prop:bow-metric} and known in literature results on the hyperbolicity of corresponding graph classes (see Table \ref{table:hb_sl_bow}) we get also that graphs with tree-length $\lambda$, $k$-chordal graphs for every $k\ge 3$, and AT-free graphs all satisfy a ($\lambda,\mu$)-bow metric for some  values of $\lambda$ and $\mu$. Below, we give direct proofs for  those results.

\medskip

Recall that an independent set of three vertices such that each pair is joined by a path that avoids the neighborhood of the third is called an \emph{asteroidal triple}. A graph $G$ is called an \emph{AT-free graph} if it does not contain any asteroidal triple. The class of AT-free graphs contains many intersection families of graphs, including permutation graphs, trapezoid graphs, co-comparability graphs and interval graphs. 

\begin{proposition}  \label{prop:bow-metric-AT}
Every AT-free graph satisfies $(1,2)$-bow metric. 
\end{proposition}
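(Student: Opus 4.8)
The plan is to prove the statement by contradiction, assuming that some four vertices $u,v,w,x$ witness a failure of $(1,2)$-bow metric, and then extracting an asteroidal triple from the resulting configuration. So I would suppose $v\in I(u,w)$, $w\in I(v,x)$, and $d(v,w)>1$ (that is, $d(v,w)\ge 2$), yet $d(u,x)<d(u,v)+d(v,w)+d(w,x)-2$. The first step is to fix a shortest $(u,x)$-path $P(u,x)$ and to understand how much the overlap hypothesis forces $P(u,w)=P(u,v)\cup P(v,w)$ and $P(v,x)=P(v,w)\cup P(w,x)$ to deviate from geodesity when glued. Since both $u\cdots v\cdots w$ and $v\cdots w\cdots x$ are shortest, the defect lives entirely in how $u$ and $x$ relate across the shared subpath $P(v,w)$; I would name the relevant distances and reduce the inequality $d(u,x)<d(u,v)+d(v,w)+d(w,x)-2$ to a statement saying that a shortest $(u,x)$-path must ``shortcut'' past the interior of the overlap $P(v,w)$.

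The key structural step is to locate a witness vertex on the overlap and argue that it is separated from $u$ and from $x$ in the asteroidal-triple sense. Concretely, I would pick an interior vertex of $P(v,w)$ — call it $t$ — and try to show that $\{u,t,x\}$ (or a slight perturbation, perhaps using the neighbors of $t$ along the overlap) forms an asteroidal triple: each pair among them should be joinable by a path avoiding the closed neighborhood of the third. The pair $(u,x)$ is connected by $P(u,x)$, and I must check that $P(u,x)$ avoids $N[t]$; this is exactly where the bow-metric defect of more than $2$ should be used, since if $P(u,x)$ entered $N[t]$ it would give a short $(u,x)$-route through the overlap contradicting the assumed strict inequality. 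For the pair $(u,t)$ the path $P(u,v)$ extended into the overlap up to $t$ should avoid $N[x]$ (using $w\in I(v,x)$ and that $w$ is past $t$ toward $x$), and symmetrically $P(t,x)$ should avoid $N[u]$.

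The main obstacle I anticipate is verifying the three \emph{neighborhood-avoidance} conditions simultaneously while keeping the three vertices pairwise non-adjacent; the constant $2$ in the $\mu$-parameter is presumably tuned precisely so that a defect \emph{strictly greater} than $2$ is incompatible with any of the three connecting paths grazing the forbidden neighborhoods. The delicate point is that an asteroidal triple requires an \emph{independent} triple, so I must ensure $t$ is far enough (distance at least $2$) from both $u$ and $x$ and not adjacent to either, which is why the hypothesis $d(v,w)>1$ (strict) and the choice of an interior vertex $t$ matter: they guarantee enough room on the overlap. If choosing a single interior $t$ does not immediately yield non-adjacency, I would instead take the two neighbors $t_v,t_w$ of the overlap adjacent to $v$ and $w$ respectively and argue with whichever gives the cleaner separation, or combine distance estimates from $v\in I(u,w)$ and $w\in I(v,x)$ to push the witness to an appropriate position. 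Once the asteroidal triple $\{u,t,x\}$ is exhibited, it contradicts AT-freeness, completing the proof.
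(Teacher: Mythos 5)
Your plan is correct in substance and can be completed into a valid proof, but it is not the paper's proof: the two arguments use different asteroidal triples. The paper first reduces without loss of generality to $d(v,w)=2$, assumes $u\neq v$ and $w\neq x$, and works with the \emph{local} triple $\{v',y,w'\}$, where $y$ is the midpoint of the overlap, $v'$ is the neighbor of $v$ toward $u$, and $w'$ is the neighbor of $w$ toward $x$; easy adjacency checks show the pairs $(v',y)$ and $(y,w')$ are joined by paths missing the third vertex's neighborhood, so AT-freeness forces $y$ to lie on, or have a neighbor $z$ on, a fixed shortest $(u,x)$-path, and then $d(u,z)\ge d(u,v)$ and $d(z,x)\ge d(x,w)$ give the bound directly. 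You instead keep $d(v,w)$ arbitrary, pick an interior vertex $t$ of the overlap, and argue by contradiction with the \emph{global} triple $\{u,t,x\}$: if $d(u,x)<d(u,v)+d(v,w)+d(w,x)-2$, then no vertex of a shortest $(u,x)$-path can lie in $N[t]$, since such a vertex $z$ would give $d(u,x)=d(u,z)+d(z,x)\ge d(u,t)+d(t,x)-2=d(u,v)+d(v,w)+d(w,x)-2$; hence the triple would be asteroidal. The underlying mechanism is identical---AT-freeness pushes a vertex of a shortest $(u,x)$-path into the neighborhood of a middle vertex of the overlap---so the two proofs are cousins; yours avoids the reduction to $d(v,w)=2$, at the price of needing global distance estimates along $P(u,v)$ and $P(w,x)$.

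Three points you gloss over must be supplied to finish your version. First, the degenerate cases $u=v$ and $x=w$, in which your triple fails to be independent, have to be dispatched separately; they are trivial, since the bow inequality then holds with no defect at all. Second, independence also requires $u\not\sim x$, a pair you never check; it follows by summing the triangle inequalities $d(u,x)\ge d(v,x)-d(u,v)$ and $d(u,x)\ge d(u,w)-d(w,x)$, which yields $d(u,x)\ge d(v,w)\ge 2$. Third, your justification that $P(u,v)$ prolonged to $t$ avoids $N[x]$ (``$w$ is past $t$ toward $x$'') only covers the vertices on the overlap; for a vertex $a\in P(u,v)$ you need the summing trick $d(v,w)+d(w,x)=d(v,x)\le d(v,a)+d(a,x)$ and $d(u,v)+d(v,w)=d(u,w)\le d(u,a)+d(a,x)+d(x,w)$, which gives $d(a,x)\ge d(v,w)\ge 2$ (this is exactly the computation of Claim \ref{cl:one} in the paper's proof of Proposition \ref{prop:bow-metric-k-ch}, specialized to $b=x$), and symmetrically for the $(t,x)$-path avoiding $N[u]$. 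With these three points supplied, your argument is complete.
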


\begin{proof} Consider arbitrary four vertices $u,v,w,x$ such that $v\in I(u,w), w\in I(v,x)$ and $d(v,w)>1$. We need to show that $d(x,u)\ge d(u,v)+d(v,w)+d(w,x)-2.$ Without loss of generality, we can choose $d(v,w)$ to be equal to $2$. Consider a common neighbor $y$ of $v$ and $w$. 
We can also assume without loss of generality $u\not= v$ and $w\not= x$. Let $v'$ be a neighbor of $v$ one step closer to $u$ and $w'$ be a neighbor of $w$ one step closer to $x$. 
Fix shortest paths $P(u,v')$, $P(w',x)$ and $P(u,x)$ connecting corresponding vertices. We know from distance requirements that $v'$ cannot be adjacent to $y,w,w'$, that $w'$ cannot be adjacent to $y,v,v'$, and that $y$ cannot have any neighbors in paths $P(u,v')$ and  $P(w',x)$. To avoid an asteroidal triple formed by vertices $v',y,w'$,  either $y \in P(u,x)$, or vertex $y$ must have a neighbor in path $P(u,x)$.  In the former case, we are done since $d(u,x) = d(u,y)+d(y,x) = d(u,v)+d(v,w)+d(w,x)$. Now in the latter case, let $z$ be any neighbor of $y$ on $P(u,x)$.  Since $d(u,y)=d(u,v)+1$ and $d(x,y)=d(x,w)+1$, we get $d(u,z)\ge d(u,v)$ and $d(x,z)\ge d(x,w)$. Therefore, $d(x,u)=d(u,z)+d(z,x)\ge d(u,v)+d(w,x)=d(u,v)+d(v,w)+d(w,x)-2.$
\end{proof}

Note that the result of Proposition  \ref{prop:bow-metric-AT} is sharp. $(1\times n)$-Rectilinear grid (so-called {\em ladder}) gives an example of an AT-free graph that satisfies a ($0,\mu$)-bow metric only for  $\mu=\Omega(n)$. Furthermore, $(1\times 2)$-rectilinear grid (so-called {\em domino}) gives an example of an AT-free graph that satisfies ($1,2$)-bow metric but not ($1,1$)-bow metric. 
\medskip

Recall that a {\em tree-decomposition} of a graph $G=(V,E)$ is a tree $T$
whose vertices $V(T)$, called {\em bags}, are subsets of $V$ such that \medskip

\noindent
(i) $\cup_{X\in V(T)}X = V$; \\
(ii) for all $uv \in E$, there exists
$X \in V(T)$ with $u, v \in X$; \\
(iii) for all $X, Y, Z \in V(T)$, if $Y$
is on the path from $X$ to $Z$ in $T$ then $X \cap Z \subseteq Y$. 
\medskip

\noindent
The {\em length} of a tree-decomposition of $G$ is the maximal diameter in $G$ of a bag of the decomposition and the {\em tree-length} of $G$ \cite{DoGa2007} is the minimum, over all tree-decompositions $T$ of $G$, of the
length of $T$.  
 
\begin{proposition}  \label{prop:bow-metric-tl}
Every graph with tree-length $\lambda$ satisfies $(\lambda,2\lambda)$-bow metric. 
\end{proposition}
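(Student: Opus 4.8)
The plan is to mimic the slimness argument of Proposition~\ref{prop:bow-metric-sl}, replacing the geometric $\varsigma$-slimness condition by a structural consequence of the tree-decomposition. Fix four vertices $u,v,w,x$ with $v\in I(u,w)$, $w\in I(v,x)$ and $d(v,w)>\lambda$; I must produce a vertex $y$ lying on some shortest $(u,x)$-path that is ``close'' (within distance $\lambda$) to the overlap region, and then conclude exactly as in Proposition~\ref{prop:bow-metric-sl}. The key observation is that tree-length $\lambda$ forces every bag to have diameter at most $\lambda$, and that the standard shortest-path properties of tree-decompositions let me locate such a $y$.

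First I would fix a tree-decomposition $T$ of $G$ of length $\lambda$, fix a shortest $(u,x)$-path $P(u,x)$, and consider the bags containing $v$ and $w$ respectively. The main tool is the well-known fact that in a tree-decomposition the bags separating two vertices must contain a vertex of every $u$--$x$ path: if $X_v$ is a bag containing $v$ and $X_w$ a bag containing $w$, then along the tree path between the bag meeting $u$ and the bag meeting $x$, some bag $X$ separates the overlap region from the rest, and $X$ must intersect $P(u,x)$ in some vertex $y$ while also being within distance $\lambda$ (the bag diameter) of a vertex of the geodesic through $v,w$. Concretely, I would argue that there is a vertex $y\in P(u,x)$ and a vertex $z$ on the shortest path $P(v,w)$ (a subpath of $I(u,x)$ realizing the overlap) with $d(y,z)\le \lambda$, because $y$ and $z$ lie in a common bag.

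Once such a pair $(y,z)$ is produced, the numerical estimate is identical in spirit to Proposition~\ref{prop:bow-metric-sl}. From $d(y,z)\le\lambda$ and the fact that $z$ lies on the geodesic from $u$ through $v,w$ to $x$, I would derive the two one-sided bounds $d(u,y)\ge d(u,v)+d(v,z)-\lambda$ and $d(x,y)\ge d(x,w)+d(w,z)-\lambda$ (splitting the overlap length $d(v,w)=d(v,z)+d(z,w)$ across the two sides), and then add them using $d(u,x)=d(u,y)+d(y,x)$ to obtain $d(u,x)\ge d(u,v)+d(v,w)+d(w,x)-2\lambda$, which is exactly the $(\lambda,2\lambda)$-bow inequality.

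The main obstacle is the rigorous production of the separating bag and the resulting vertex $y\in P(u,x)$ together with its witness $z$ on the overlap geodesic; the delicate point is ensuring that $z$ can be chosen on the common subpath $P(v,w)$ (so that the overlap length is fully accounted for) rather than merely somewhere on $I(u,x)$, and that the single bag genuinely contains both $y$ and a suitable $z$ so that the diameter bound $\lambda$ applies to the pair. A clean way around this is to invoke the characterization of tree-length via the property that for any geodesic and any vertex at distance more than $\lambda$ from its endpoints there is a nearby separator, or simply to route through Proposition~\ref{prop:bow-metric} using the known bound $\delta\le\lambda$ for tree-length-$\lambda$ graphs; I would present the direct separator argument as the primary proof since it yields the sharper constant claimed in Table~\ref{table:hb_sl_bow}.
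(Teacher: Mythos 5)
Your numerical step (third paragraph) is correct, but the crux of your argument --- the existence of $y\in P(u,x)$ and $z\in P(v,w)$ lying in a common bag, hence with $d(y,z)\le\lambda$ --- is exactly what you leave unproven, and it is where the entire difficulty of the proposition sits. A structural red flag: your sketch never uses the hypothesis $d(v,w)>\lambda$. If the common-bag claim held unconditionally, your arithmetic would show that every graph of tree-length $\lambda$ satisfies $(0,2\lambda)$-bow metric; this is false, since the $(1\times n)$-grid (the ladder) has tree-length $2$ yet, as recalled in the paper, satisfies $(0,\mu)$-bow metric only for $\mu=\Omega(n)$. So the existence claim is true only because $d(v,w)>\lambda$, and a correct proof must invoke that hypothesis precisely at this point. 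The gap can be filled as follows. Let $T_1$ (resp.\ $T_2$) be the set of bags meeting $P(v,w)$ (resp.\ $P(u,x)$); each is a subtree of the decomposition tree. If $T_1\cap T_2\neq\emptyset$, a common bag yields the pair $(y,z)$ and your computation finishes the proof. If $T_1\cap T_2=\emptyset$, take the tree edge $\{X,Y\}$ separating them and set $S=X\cap Y$: then $S$ has diameter at most $\lambda$, is disjoint from both geodesics, and separates $\{v,w\}$ from $\{u,x\}$. Since $v\in I(u,w)$, the concatenation of a shortest $(u,v)$-path with $P(v,w)$ is a shortest $(u,w)$-path and must cross $S$ on its $(u,v)$-portion, at some $s_1$ with $d(u,v)=d(u,s_1)+d(s_1,v)$; symmetrically there is $s_2\in S$ with $d(w,x)=d(w,s_2)+d(s_2,x)$. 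Comparing $d(u,w)=d(u,s_1)+d(s_1,v)+d(v,w)$ with the route $u\to s_1\to s_2\to w$ gives $d(s_1,v)+d(v,w)\le \lambda+d(s_2,w)$, and comparing $d(v,x)=d(v,w)+d(w,s_2)+d(s_2,x)$ with the route $v\to s_1\to s_2\to x$ gives $d(v,w)+d(w,s_2)\le \lambda+d(v,s_1)$; adding the two yields $d(v,w)\le\lambda$, a contradiction. Only with this case analysis does your primary argument become a proof.

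Once completed this way, your route is genuinely different from the paper's. The paper never locates a point where the two geodesics come close; it compares the three distance sums $A=d(u,w)+d(v,x)$, $B=d(u,x)+d(v,w)$, $C=d(u,v)+d(w,x)$, roots the decomposition tree at a bag containing $w$, and takes the lowest common ancestor $M$ of bags containing $u,v,x$. Since $M$ separates every pair among $u,v,w,x$ except possibly one, the two largest sums are at least $L=\ell_u+\ell_v+\ell_w+\ell_x$, while all three are at most $L+2\lambda$; as $A\ge B$ and $A-C=2d(v,w)>2\lambda$, the sum $C$ cannot be second largest, so $A-B\le 2\lambda$, which is the bow inequality. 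Note that the hypothesis $d(v,w)>\lambda$ enters there, to disqualify $C$ --- every correct proof uses it somewhere, which is the test your sketch fails. Finally, your fallback of citing $\delta\le\lambda$ for tree-length-$\lambda$ graphs and applying Proposition~\ref{prop:bow-metric} is legitimate and, contrary to your stated reason for preferring the separator argument, gives exactly the same constants $(\lambda,2\lambda)$; but it is precisely the indirect derivation that the paper mentions and then sets aside, the declared purpose of Proposition~\ref{prop:bow-metric-tl} being a direct proof.
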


\begin{proof} Consider arbitrary four vertices $u,y,w,x$ such that $y\in I(u,w), w\in I(y,x)$ and $d(y,w)>\lambda$. We need to show that $d(x,u)\ge d(u,y)+d(y,w)+d(w,x)-2\lambda.$ Consider three distance sums $A=d(u,w)+d(y,x)$, $B=d(u,x)+d(y,w)$, and $C=d(u,y)+d(w,x)$. Clearly, $A-C=2d(y,w)>2\lambda$. Furthermore,  $B\le A$ since $B=d(u,x)+d(y,w)\le d(u,y)+d(y,w)+d(w,x)+d(y,w)=A$.  

Consider now a decomposition tree $T$ of $G$ with bags
of diameter at most $\lambda$. Let $X, W, Y, U$ be
some bags of $T$ containing the vertices $x, w, y, u$, respectively.
Root the tree $T$ at $W$ and let $M$ be the lowest common
ancestor in $T$ of $X, Y$, and $U$. Let $\ell_a$ be the distance in $G$ between $a\in \{x,w,y,u\}$ and a closest to $a$ vertex in $M$.  Set $L:=\ell_x+\ell_w+\ell_y+\ell_u$.  For every $a,b\in \{x,w,y,u\}$, by the triangle inequality, we have 
$d(a,b)\le \ell_a+\ell_b+\lambda$. Hence, each of the sums $A$, $B$, and $C$ are at most $\ell_x+\ell_w+\ell_y+\ell_u+2\lambda=L+2\lambda$. 
By the choice of $M$, there exists a bag from $\{X, Y, U\}$ such that the two paths of $T$ connecting it to the two other bags must pass via $M$. 
By definition of the tree
$T$, the bag $M$ is a separator in $G$ for any pair of vertices $a, b$ from the set $\{x, y, u, w\}$ except possibly one pair. This
shows that $d(a, b)\ge \ell_a + \ell_b$ (except maybe for the unique pair $a,b$ for which $M$ is not a separator). Therefore, two largest
distance sums among $A,B$ and $C$ are larger than or equal to $L=\ell_x+\ell_w+\ell_y+\ell_u$. 

Thus, the difference between the two largest distance sums is at most $2\lambda$. We know that $A\ge B$ and $A-C>2\lambda$. Hence, $A$ is  largest among the three distance sums and $C$ cannot be second largest (as $A-C> 2\lambda$). Necessarily, $B$ is second largest and, therefore, $2\lambda\ge A-B= d(u,w)+d(y,x) -d(u,x)-d(y,w)$, i.e.,  $d(u,x)\ge d(u,y)+d(y,w)+d(w,x)-2\lambda.$
\end{proof}

\medskip 

\noindent
A graph $G$ is {\em $k$-chordal} if every induced cycle of $G$ has length at most $k$. The parameter $k$ is usually called \emph{the chordality} of $G$.
When $k=3$, $G$ is called a \emph{chordal graph}. As we mentioned earlier, each chordal graph satisfies (0,1)-bow metric. Next proposition covers all remaining cases  with $k\ge 4$.  
 
\begin{proposition}  \label{prop:bow-metric-k-ch}
Every $k$-chordal graph $G$ $(k\ge 4)$ satisfies $(\lfloor\frac{k}{4}\rfloor, \lfloor\frac{k}{2}\rfloor)$-bow metric. 
\end{proposition}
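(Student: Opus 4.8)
The plan is to reduce the claim to a single projection inequality, and then to establish that inequality by an isometric-cycle argument that exploits $k$-chordality directly. Fix shortest paths $P(u,v)$, $P(v,w)$, $P(w,x)$ and let $Q=P(u,x)$ be a shortest $(u,x)$-path; write $p=d(u,v)$, $q=d(v,w)$, $r=d(w,x)$, so that $P(u,v)\cup P(v,w)$ is a geodesic from $u$ to $w$ (as $v\in I(u,w)$) and $P(v,w)\cup P(w,x)$ is a geodesic from $v$ to $x$ (as $w\in I(v,x)$). Let $y\in Q$ be a vertex closest to $w$ and put $t=d(w,y)=d(w,Q)$. Exactly as in the proof of Proposition~\ref{prop:bow-metric-sl}, the inequalities $d(u,y)\ge d(u,w)-t=p+q-t$ and $d(x,y)\ge d(x,w)-t=r-t$, together with $d(u,x)=d(u,y)+d(y,x)$ (because $y\in Q$), give $d(u,x)\ge p+q+r-2t$. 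Since $2\lfloor k/4\rfloor\le\lfloor k/2\rfloor$ for every $k$, it therefore suffices to prove the projection bound $t=d(w,Q)\le\lfloor k/4\rfloor$.

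Next I would record why the hypothesis $d(v,w)>\lfloor k/4\rfloor$ forces this projection to land on $Q$ rather than on $P(u,v)$. View $w$ as an interior vertex of the side $[v,x]:=P(v,w)\cup P(w,x)$ of the geodesic triangle $\Delta(u,v,x)$ whose third side is $Q$. Any vertex $w'\in P(u,v)$ satisfies $d(w,w')\ge d(u,w)-d(u,w')\ge (p+q)-p=q>\lfloor k/4\rfloor$, so $w$ is strictly farther than $\lfloor k/4\rfloor$ from the whole side $P(u,v)$. Hence, once we know that $w$ is within $\lfloor k/4\rfloor$ of $P(u,v)\cup Q$, the realizing vertex must lie on $Q$, and the desired bound $d(w,Q)\le\lfloor k/4\rfloor$ follows.

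The technical core is thus to show that, in a $k$-chordal graph, the vertex $w$ on side $[v,x]$ of the geodesic triangle $\Delta(u,v,x)$ is within distance $\lfloor k/4\rfloor$ of the union of the other two sides. Here I would argue with cycles rather than quote a slimness bound: take a shortest path $N$ from $w$ to a closest vertex of $P(u,v)\cup Q$, close it into a cycle using sub-arcs of the three geodesic sides $P(u,v)$, $[v,x]$, $Q$, and repeatedly shortcut chords—each chord being short because each of $P(u,v),P(v,w),P(w,x),Q,N$ is a geodesic—to extract an \emph{isometric} cycle $C$ running through a neighbourhood of $w$. Any two vertices of $C$ at $G$-distance $d$ force $|C|\ge 2d$, so feeding in that $w$ is far (distance $\ge q$) from $P(u,v)$ and that its closest approach to $Q$ is realized at $y$, a standard analysis should bound $t$ by a quarter of $|C|$; with $|C|\le k$ this yields $t\le\lfloor k/4\rfloor$.

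I expect the main obstacle to be the constant, not the overall scheme. The generic slimness bound for $k$-chordal graphs is $\lfloor k/4\rfloor+1$ (cf.\ Table~\ref{table:hb_sl_bow}), which combined with $\lambda=\lfloor k/4\rfloor$ is off by exactly one; this is precisely why a direct argument is needed, and shaving the $+1$ requires careful bookkeeping of cycle-distances versus $G$-distances and of parities in the floors $\lfloor k/4\rfloor$ and $\lfloor k/2\rfloor$. Making the extracted cycle genuinely isometric (so that $k$-chordality applies) while keeping its length large enough to deliver the factor $\tfrac14$, and verifying that the closest approach is attained on $Q$ (where the hypothesis $q>\lfloor k/4\rfloor$ is essential), are the delicate points the full write-up must handle with care.
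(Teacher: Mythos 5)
Your reduction in the first two paragraphs is sound: by the argument of Proposition~\ref{prop:bow-metric-sl} one indeed gets $d(u,x)\ge d(u,v)+d(v,w)+d(w,x)-2t$ with $t=d(w,Q)$, and since $2\lfloor k/4\rfloor\le\lfloor k/2\rfloor$ it would suffice to prove $t\le\lfloor k/4\rfloor$; your observation that any realizing vertex must lie on $Q$ rather than on $P(u,v)$ is also correct. But that projection bound is exactly where your proposal stops being a proof. The known slimness bound for $k$-chordal graphs is $\lfloor k/4\rfloor+1$, and you give no argument that shaves the $+1$: the isometric-cycle sketch (``a standard analysis should bound $t$ by a quarter of $|C|$'') is never carried out, and it is not clear it can be, since a cycle through $w$ and a nearest vertex $y\in Q$ only yields $|C|\ge 2t$, i.e.\ $t\le k/2$; to reach $k/4$ you would need four pairwise-distant vertices on a single induced cycle of length at most $k$, and producing them is the entire difficulty. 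You acknowledge this yourself in your last paragraph, which means the core claim of the proposition is assumed rather than proved.

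Moreover, your reduction demands strictly more than the paper needs, and possibly more than is true. The paper's Claim~\ref{cl:two} gives $d(u,x)\ge d(u,v)+d(v,w)+d(w,x)-2d(a,b)$ for \emph{every} pair $a\in P(v,w)$, $b\in P(u,x)$, not just for $a=w$; so it suffices to exhibit \emph{some} vertex of $P(v,w)$ within $\lfloor k/4\rfloor$ of $P(u,x)$. The paper obtains this existential statement by contradiction: assuming all such pairs are at distance $>k/4$ (which by Claim~\ref{cl:one} and symmetry also forces $d(u,v)>k/4$ and $d(w,x)>k/4$), it forms the simple cycle $C=P(u,v)\cup P(v,w)\cup P(w,x)\cup P(u,x)$ of length $>k$, shows every chord of $C$ must join $P(u,x)$ to $P(u,v)$ or to $P(w,x)$, then chooses extremal chords $ab$ and $st$ and extracts a subcycle $C'$ of length $>k$ that can have no chords at all, contradicting $k$-chordality. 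Nothing in that argument locates the close vertex at $w$ itself, and the hypotheses give no control on where along $P(v,w)$ it sits; your pointwise bound $d(w,Q)\le\lfloor k/4\rfloor$ is a genuinely stronger statement for which a new argument would be required. Until either it or the weaker existential version (with the flexibility of Claim~\ref{cl:two}) is actually established, the proof is incomplete at its decisive step.
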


\begin{proof} Consider arbitrary four vertices $u,v,w,x$ such that $v\in I(u,w), w\in I(v,x)$ and $d(v,w)>k/4$. We need to show that $d(x,u)\ge d(u,v)+d(v,w)+d(w,x)-k/2$ holds. Since the distances are integers, the required statement will follow.  Consider arbitrary shortest paths $P(u,v)$, $P(v,w)$, $P(w,x)$ and $P(u,x)$ connecting the corresponding vertices. First we prove two claims. 

\begin{claim}\label{cl:one}
    For every vertices $a\in P(u,v)$ and $b\in P(w,x)$, $d(a,b)\ge d(v,w)>k/4$ holds. 
\end{claim}
We have $d(v,w)+d(w,x)=d(v,x)\le d(x,b)+d(a,b)+d(a,v)$ and 
$d(w,v)+d(v,u)=d(w,u)\le d(u,a)+d(a,b)+d(b,w)$. Summing up these inequalities, we get $2d(v,w)+d(u,v)+d(w,x)\le d(x,b)+d(a,b)+d(a,v)+d(u,a)+d(a,b)+d(b,w)=d(x,w)+2d(a,b)+d(u,v)$, i.e., $d(a,b)\ge d(v,w)>k/4$.   {\hfill $\diamond$}

\begin{claim}\label{cl:two}
 For every vertices $a\in P(w,v)$ and $b\in P(u,x)$,  $d(u,x)\ge d(u,v)+d(v,w)+d(w,x)-2d(a,b)$ holds. 
\end{claim}
We have $d(v,w)+d(w,x)=d(v,x)\le d(x,b)+d(a,b)+d(a,v)$ and 
$d(w,v)+d(v,u)=d(w,u)\le d(u,b)+d(a,b)+d(a,w)$. Summing up these inequalities, we get $2d(v,w)+d(u,v)+d(w,x)\le d(x,b)+d(a,b)+d(a,v)+d(u,b)+d(a,b)+d(a,w)=d(x,u)+2d(a,b)+d(w,v)$, i.e., $d(u,x)\ge d(u,v)+d(v,w)+d(w,x)-2d(a,b)$.   {\hfill $\diamond$}
\medskip

In Claim \ref{cl:two}, $d(a,b)\le k/4$ implies $d(u,x)\ge d(u,v)+d(v,w)+d(w,x)-k/2$.  So, we may assume, in what follows,  that $d(a,b)> k/4$ holds for every $a\in P(w,v)$ and $b\in P(u,x)$. 
In particular, we have $d(u,v)> k/4$ and  $d(w,x)> k/4$.  We prove below that this situation leads to a contradiction with $G$ being a $k$-chordal graph.  

Consider a cycle $C$ formed by paths $P(u,v)$, $P(v,w)$, $P(w,x)$ and $P(x,u)$. Without loss of generality, we may assume that path  $P(u,x)$ shares with paths $P(u,v)$ and $P(x,w)$ only end vertices $u$ and $x$, respectively, i.e., this cycle $C$ is simple (see also Claim \ref{cl:one} and Claim \ref{cl:two}).  Since $P(u,v)\cup  P(v,w)$ is a shortest path between $u$ and $w$ of length greater that $k/2$, this simple cycle $C$ has length greater that $k$. By  $k$-chordality of $G$, $C$ must have chords. From the discussion above, any chord of $C$ must connect a vertex of $P(u,v)$ with a vertex of $P(u,x)$ or a vertex of $P(w,x)$ with a vertex of $P(u,x)$. Chose a chord $ab$ between  $P(u,v)$ and  $P(u,x)$ with $a\in P(u,v)$ and  $b\in P(u,x)$ 
(if it exists) with the maximum sum $d(a,u)+d(b,u)$ and a 
chord $st$ between  $P(u,x)$ and  $P(x,w)$ with $s\in P(u,x)$ and  $t\in P(x,w)$ 
(if it exists) with the maximum sum $d(s,x)+d(t,x)$ (in case chord $ab$ exists, we choose $st$ such that $d(b,x)\ge d(s,x)$ and $d(s,x)+d(t,x)$ is maximal among all such choices of $st$; note that such $st$ may not exist).  Now consider a subcycle $C'$ of $C$ 
formed by path $P(v,w)$, subpath $P(w,t)$ of $P(w,x)$, edge $ts$ (it such a chord $ts$ of $C$ does not exist then let $s=t=x$), subpath $P(s,b)$ of $P(x,u)$, edge $ba$ (it such a chord $ba$ of $C$ does not exist then let $a=b=u$), and subpath $P(a,v)$ of $P(u,v)$. Since $P(a,v)\cup  P(v,w)$ is a shortest path between $a$ and $w$ of length greater that $k/2$ (recall that $d(v,w)>k/4$ and $d(v,a)\ge k/4$ as a consequence of $d(v,b)>k/4$), this simple cycle $C'$ has also length greater that $k$. By  $k$-chordality of $G$, $C'$ must have chords. Again, from the discussion above, any chord of $C'$ must connect a vertex of $P(a,v)$ with a vertex of $P(b,s)$ or a vertex of $P(w,t)$ with a vertex of $P(b,s)$. However, such chords cannot exist by the choices of chords $ab$ and $st$. The contradiction obtained completes the  proof.  
\end{proof}
 

\section{When ($\lambda,\mu$)-bow metric guaranties hyperbolicity}\label{Sec:imply-hyp}

We  are now looking for some converse relationship
between hyperbolicity and ($\lambda,\mu$)-bow metric.  
We will need the following important result from \cite{delta-hyp-1st}.

\begin{proposition} [\cite{delta-hyp-1st}]  \label{prop:delta-hyp-1st} 
If the intervals of a graph $G$
are $p$-thin and the metric triangles of $G$ have sides of length
at most $q$, then $G$ has slimness at most 
$2p+\frac{q}{2}$ and, hence, $G$ is $(4p + q+\frac{1}{2})$-hyperbolic. 
\end{proposition}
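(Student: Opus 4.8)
The plan is to establish the slimness bound $\varsigma(G)\le 2p+\frac{q}{2}$ directly, after which the stated hyperbolicity bound is immediate from Proposition \ref{prop:sl-vs-hb}: since $\delta\le 2\varsigma+\frac12$, we get $\delta\le 2(2p+\frac{q}{2})+\frac12=4p+q+\frac12$. So the whole effort goes into the slimness estimate. Fix a geodesic triangle $\Delta(u,v,w)$ and a vertex $x$ on the side $P(u,v)$ with $d(u,x)=k$; I must bound $d(x,P(u,w)\cup P(v,w))$. The organizing device is a quasi-median $u'v'w'$ of the triple $u,v,w$, which exists for every triple. Being a metric triangle, all of $d(u',v'),d(v',w'),d(w',u')$ are at most $q$, and the defining identities force $u'\in I(u,v)\cap I(u,w)$, $v'\in I(u,v)\cap I(v,w)$, and $u'\in I(u,v')$; the last inclusion guarantees a single $(u,v)$-geodesic passing through both $u'$ and $v'$, with $d(u,u')=:a$ and $d(u,v')=a+d(u',v')$.

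Two auxiliary facts drive everything. First, a projection estimate: any $r\in I(u,w)$ with $d(u,r)=j\le d(u,w)$ lies within $p$ of $P(u,w)$, because the vertex $r_0$ of $P(u,w)$ at distance $j$ from $u$ and $r$ itself both belong to the slice $S_j(u,w)$, whose diameter is at most $\tau(G)\le p$ by $p$-thinness. Second, I replace $x$ by a comparison point $x^{*}$ taken at distance $k$ from $u$ on the auxiliary $(u,v)$-geodesic through $u'$ and $v'$; since $x$ and $x^{*}$ share the slice $S_k(u,v)$, we have $d(x,x^{*})\le p$.

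Now I case on the location of $x^{*}$ along that auxiliary geodesic. If $k\le a$, then $x^{*}\in I(u,u')\subseteq I(u,w)$, so the projection estimate gives $d(x^{*},P(u,w))\le p$ and hence $d(x,P(u,w))\le 2p$; the case $k\ge a+d(u',v')$ is symmetric, routing through $v'$ onto $P(v,w)$. In the remaining case $x^{*}$ lies strictly between $u'$ and $v'$, so $d(x^{*},u')+d(x^{*},v')=d(u',v')\le q$ and at least one summand is at most $q/2$; assuming $d(x^{*},u')\le q/2$, I route $x\to x^{*}\to u'\to P(u,w)$ and add the contributions $p+\frac{q}{2}+p$, giving $d(x,P(u,w))\le 2p+\frac{q}{2}$ (symmetrically through $v'$ and $P(v,w)$ when $d(x^{*},v')\le q/2$). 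Taking the worst of the three cases yields $d(x,P(u,w)\cup P(v,w))\le 2p+\frac{q}{2}$, which is exactly the slimness bound.

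The step I expect to be the main obstacle, and the reason the bound carries a factor $2p$ rather than $p$, is that the quasi-median vertices $u',v',w'$ need not lie on the chosen sides $P(u,v),P(u,w)$ of the geodesic triangle. This is what forces the detour through the comparison point $x^{*}$ on an auxiliary geodesic and hence two separate invocations of thinness (one to pass from $x$ to $x^{*}$, one to project onto $P(u,w)$), each costing $p$. Some care is also needed to verify the interval inclusions $I(u,u')\subseteq I(u,w)$ and $u'\in I(u,v')$ that legitimize both the case analysis and the auxiliary geodesic; these are short computations from the quasi-median identities.
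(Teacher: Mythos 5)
Your proof is correct, and it is worth noting that the paper itself offers no proof of this proposition at all: it is quoted as a known result from \cite{delta-hyp-1st}, so there is no internal argument to compare against. Your argument --- reducing to the slimness bound, then combining a quasi-median $u'v'w'$ of the corner triple with two invocations of $p$-thinness (one to pass from $x$ to the comparison point $x^{*}$ on the geodesic through $u'$ and $v'$, one to project $u'$, $v'$, or $x^{*}$ onto the appropriate side), and splitting $d(u',v')\le q$ to pay only $q/2$ in the middle case --- is a valid, self-contained derivation of $\varsigma \le 2p+\frac{q}{2}$, and the hyperbolicity bound then follows from Proposition~\ref{prop:sl-vs-hb} exactly as you say; this is essentially the argument of the cited reference.
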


We complement Proposition  \ref{prop:delta-hyp-1st} with the following result which generalizes a result from \cite{obstructions} on Helly graphs.

\begin{proposition} \label{prop:delta-hyp-2nd} 
If the intervals of a graph $G$
are $p$-thin and the metric triangles of $G$ have sides of length
at most $q$, then $G$ is $(2q+\frac{p}{2})$-hyperbolic  and, hence, $G$ has slimness at most $6q+\frac{3}{2}p+\frac{1}{2}$.  
\end{proposition}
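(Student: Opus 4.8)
The goal is to bound the hyperbolicity $\delta(G)$ directly from the two assumptions, namely $p$-thin intervals and metric triangles with sides of length at most $q$, obtaining $\delta(G)\le 2q+\frac{p}{2}$. The second inequality (the slimness bound $6q+\frac{3}{2}p+\frac{1}{2}$) then follows immediately by plugging $\delta=2q+\frac{p}{2}$ into the inequality $\varsigma\le 3\delta+\frac{1}{2}$ from Proposition~\ref{prop:sl-vs-hb}, so I would treat that as a one-line corollary and focus all the work on the hyperbolicity bound. First I would fix four vertices $u,v,w,x$ and, without loss of generality, label the three distance sums so that $d(u,v)+d(w,x)\le d(u,w)+d(v,x)\le d(u,x)+d(v,w)=:S_1$, where $S_1$ is the largest and $S_2:=d(u,w)+d(v,x)$ is the second largest; the objective is to show $S_1-S_2\le 2(2q+\frac p2)=4q+p$.

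The natural reduction is to a quasi-median. Given the triple $u,v,w$ (or a suitably chosen triple among the four points) I would invoke the fact, quoted in the preliminaries, that every triple has a quasi-median $u'v'w'$, which is a metric triangle of maximum side-length at most $q$ by hypothesis. The quasi-median decomposes each pairwise distance among $u,v,w$ into three additive pieces running along intervals, and since $d(u',v'),d(v',w'),d(w',u')\le q$, the ``central'' metric triangle contributes at most $2q$ to any distance-sum discrepancy. The plan is then to bring the fourth point $x$ into play: project $x$ onto the relevant intervals and control the resulting errors using $p$-thinness, which bounds the spread of any slice of an interval by $p$. Concretely, two vertices lying at the same distance from a common endpoint along two shortest paths to nearby targets differ by at most $p$, and this is exactly the slack one must pay when comparing the Gromov products realized along different geodesics.

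The key computation is to express $S_1-S_2$ as a sum of (i) the defect coming from the non-degeneracy of the quasi-median metric triangle, which is at most $2q$ since each side is at most $q$ and a distance-sum difference is bounded by twice a side, and (ii) the defect coming from the fact that the projections used for the fourth point are only approximate, each such approximation costing at most $p/2$ once one passes through a common slice and applies the interval-thinness estimate. Adding a $2q$ term from the triangle and a term on the order of $\frac p2$ from the thinness of the slices should assemble into $S_1-S_2\le 2q+\frac p2$, which after the factor-of-two convention in the Gromov $4$-point condition yields $\delta\le 2q+\frac p2$; I expect the bookkeeping of constants to be where this either matches or slightly differs from the stated $2q+\frac p2$, so I would track every $\frac12$ carefully.

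The main obstacle will be obtaining the coefficient $2q$ on the metric-triangle side rather than a larger multiple: a careless argument routes the discrepancy through all three sides of the quasi-median triangle and produces $3q$ or more, which is the weaker Proposition~\ref{prop:delta-hyp-1st}-style bound. The improvement to $2q$ must exploit that only \emph{two} of the three sides of the quasi-median genuinely contribute to the gap $S_1-S_2$ between the two largest sums, the third being absorbed into the smallest sum; pinning down which side is absorbed, by comparing the roles of the three vertices $u',v',w'$ against the labeling of the largest and second-largest distance sums, is the delicate part. This is structurally parallel to the tree-length argument in Proposition~\ref{prop:bow-metric-tl}, where it was shown that the smallest distance sum is the one that can be large while the two largest differ by a controlled amount; I would reuse that ``two-largest-sums differ by a bounded quantity'' mechanism, here with the separator replaced by the quasi-median metric triangle and the bag-diameter bound $\lambda$ replaced by the side bound $q$ together with the thinness $p$.
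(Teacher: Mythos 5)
Your plan has a genuine gap at its core: the mechanism by which $p$-thinness enters. Interval thinness only compares two vertices lying in the \emph{same} interval $I(a,b)$ at equal distance from an endpoint; your proposed use of it --- ``two vertices lying at the same distance from a common endpoint along two shortest paths to nearby targets differ by at most $p$'' --- is not what the hypothesis gives, and with a single quasi-median of the triple $u,v,w$ plus a ``projection'' of $x$ you never produce two points inside one common interval to compare. The paper's proof is built precisely around manufacturing such a pair: it takes \emph{two} quasi-medians, $v'w'x'$ of the triple $v,w,x$ and $v''u''x''$ of the triple $v,u,x$, chosen so that both metric triangles are threaded onto the same interval $I(v,x)$, where $\{v,x\}$ is a pair of the largest distance sum $A=d(u,w)+d(v,x)$. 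Then $v',x',v'',x''$ all lie in $I(v,x)$, and a vertex $z\in I(v,x)$ is chosen with $d(x,z)=d(x,v')$, so that $p$-thinness legitimately yields $d(z,v')\le p$. The remainder is a case analysis on the relative positions of $v',v'',x''$ along $I(v,x)$, which across its cases gives $2\delta\le p$, $2\delta\le 2q+p$, or $2\delta<4q+p$, hence $\delta\le 2q+\frac{p}{2}$. Nothing in your outline supplies a substitute for this two-quasi-median construction, and your analogy with the tree-length proof does not help: a metric triangle, unlike a bag of a tree-decomposition, is not a separator, so the ``two largest sums differ by a bounded amount'' mechanism has no direct counterpart here.

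There is also an arithmetic inconsistency that signals the plan is not worked out: you correctly set the target as $S_1-S_2\le 2\left(2q+\frac{p}{2}\right)=4q+p$, but later claim the pieces ``assemble into $S_1-S_2\le 2q+\frac{p}{2}$'' and that the factor-of-two convention then yields $\delta\le 2q+\frac{p}{2}$; the factor of two goes the other way, since $\delta=(S_1-S_2)/2$, so these two statements are incompatible, and the claimed decomposition of the gap into a $2q$ term plus a $\frac{p}{2}$ term is never substantiated. (Your one-line derivation of the slimness bound from Proposition~\ref{prop:sl-vs-hb} is fine and matches the paper.)
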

\begin{proof} 
Consider four vertices $u,v,w,x$ of $G$ and the three distance sums $A:=d(u,w)+d(v,x)$, $B:=d(v,w)+d(u,x)$ and $C:=d(u,v)+d(w,x)$. Without loss of generality, let $A$ be largest out those three distance sums. 
Assume also that the quadruple $u,v,w,x$ realizes the hyperbolicity $\delta$ of $G$, i.e., $2\delta=A-\max\{B,C\}$. 

Let $v'w'x'$ be a quasi-median of $v,w,x$ and $v''u''x''$ be a quasi-median of $v,u,x$. Since $v'w'x'$ and $v''u''x''$ are metric triangles of $G$, we have $d(x',v')\le q$ and $d(x'',v'')\le q$. We also have (see Fig.~\ref{fig:zero-ab} for an illustration)
\begin{itemize}
    \item[]  $d(v,w)=d(v,v')+d(v',w')+d(w',w),$
    \item[]  $d(x,w) = d(x,x') + d(x',w') + d(w',w),$
    \item[]  $d(v,x) = d(v,v') + d(v',x') + d(x',x),$
\end{itemize}
and 
\begin{itemize}
    \item[]  $d(v,u)=d(v,v'')+d(v'',u'')+d(u'',u),$
    \item[]  $d(x,u) = d(x,x'') + d(x'',u'') + d(u'',u),$
    \item[]  $d(v,x) = d(v,v'') + d(v'',x'') + d(x'',x).$
\end{itemize}
Without loss of generality, we may assume $d(v,v')\ge d(v,v'')$. 

  \begin{figure}[htb]
    \begin{center} 
      \begin{minipage}[b]{16cm}
        \begin{center} 
          \includegraphics[height=16cm]{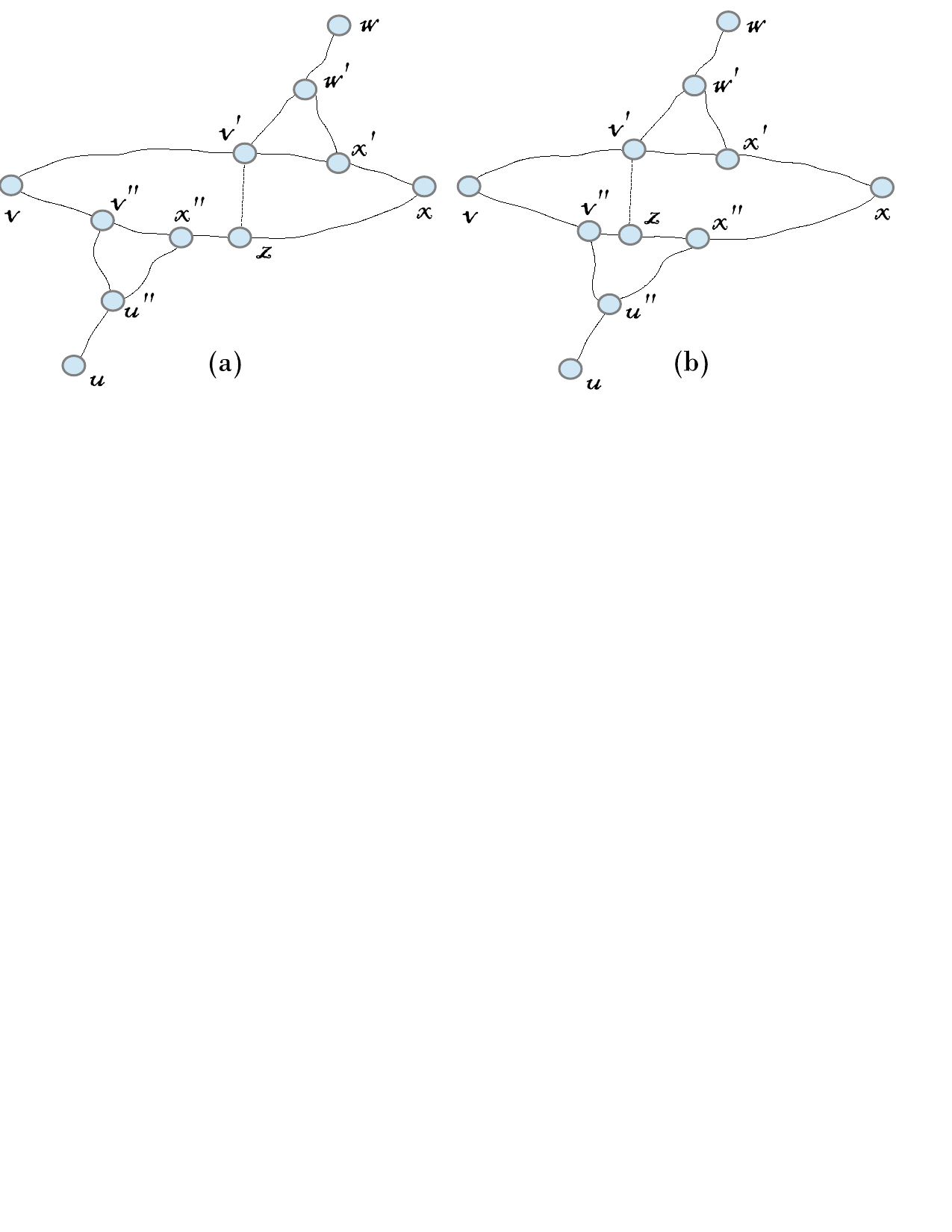}
        \end{center} \vspace*{-106mm}
        \caption{\label{fig:zero-ab} An illustration to the proof of Proposition \ref{prop:delta-hyp-2nd} .} %
      \end{minipage}
    \end{center}
   \vspace*{-1mm}
  \end{figure}
 \medskip

\noindent 
{\em Case   $d(v,v')\ge d(v,x'')$}.
\medskip

\noindent 
Consider a vertex $z\in I(x'',x)$ with $d(x,v')= d(x,z)$. See Fig.~\ref{fig:zero-ab}(a) for an illustration. Since $z,v'\in I(v,x)$ and $d(x,v')= d(x,z)$, we have $d(z,v')\le p$.  

First we show that $B\ge C$ holds. Assume, by way of contradiction, that $C>B$. 
Then, 
\begin{align*}
0> B-C   & =d(v,w)+d(u,x)-d(u,v)-d(w,x) \\
           & =(d(v,v')+d(v',w')+d(w',w))+(d(u,u'')+d(u'',x'')+d(x'',x)) \\
           & ~~~~~ -((d(v,v'')+d(v'',u'')+d(u'',u))-(d(x,x')+d(x',w')+d(w',w)) \\
           & =d(v,v')-d(v,v'') +d(v',w')-d(x',w') + d(u'',x'')-d(v'',u'') +d(x'',x)-d(x,x').
\end{align*}
Since $d(x',w')\le d(v',w')+d(v',x')$, $d(v'',u'')\le d(x'',v'')+d(x'',u'')$, $d(v,v')-d(v,v'')=d(v'',x'')+d(x'',z)$ and $d(x'',x)-d(x,x')=d(x'',z)+d(v',x')$, we get 
\begin{align*}
0 & > (d(v,v')-d(v,v'')) +(d(v',w')-d(x',w')) + (d(u'',x'')-d(v'',u'')) +(d(x'',x)-d(x,x')) \\ 
& \ge (d(v'',x'')+d(x'',z)) -d(x',v') -d(v'',x'')+(d(x'',z)+d(v',x')) \\
& = 2d(x'',z),  
\end{align*}
which is impossible. 

A contradiction obtained shows that $B=\max\{B,C\}$. Hence,    
\begin{align*}
   2\delta= A-B & =d(u,w)+d(v,x)- d(v,w)-d(u,x)   \\
                & \le (d(u,x'')+d(x'',z)+d(z,v')+d(v',w))+d(v,x)   \\
     & ~~~~~~~~~~~~~~~~~~~~~~~~~-(d(v,v')+d(v',w))-(d(u,x'')+d(x'',x)) \\
     &\le d(x'',z)+p+d(v,x)-d(v,v')-d(x'',x) \\
     &= p,   
     \end{align*}
since $d(x'',z)+d(z,x)=d(x'',x)$, $d(v,x)=d(v,v')+d(v',x)$ and $d(z,x)=d(v',x)$. 
\medskip

\noindent 
{\em Case   $d(v,v'')\le d(v,v')< d(v,x'')$}.
\medskip

\noindent 
Consider a vertex $z\in I(v'',x'')$ with $d(x,v')= d(x,z)$. See Fig.~\ref{fig:zero-ab}(b) for an illustration. Since $z,v'\in I(v,x)$ and $d(x,v')= d(x,z)$, we have $d(z,v')\le p$. As in the previous case, we have 
\begin{align*}
B-C   & =d(v,w)+d(u,x)-d(u,v)-d(w,x) \\
           & =(d(v,v')+d(v',w')+d(w',w))+(d(u,u'')+d(u'',x'')+d(x'',x)) \\
           & ~~~~~ -(d(v,v'')+d(v'',u'')+d(u'',u))-(d(x,x')+d(x',w')+d(w',w)) \\
           & =d(v,v')-d(v,v'') +d(v',w')-d(x',w') + d(u'',x'')-d(v'',u'') +d(x'',x)-d(x,x') \\
           & \ge d(v,v')-d(v,v'')-d(x',v') -d(v'',x'') +d(x'',x)-d(x,x') \\
           & =d(v'',z)-d(x',v') -d(v'',x'') +d(x'',x)-d(x,x')
\end{align*} 
since $d(v,v')-d(v,v'')=d(v'',z)$. 

Now, if $C>B$, then we get $2\delta=A-C$ and from above 
\begin{align}
d(x',v')+d(v'',x'')-d(v'',z) >d(x'',x)-d(x,x').
\end{align} 
Consequently, 
\begin{align*}
   2\delta= A-C & =d(v,x)+d(u,w)- d(v,u)-d(w,x)   \\
                & \le d(v,x)+(d(u,v'')+d(v'',z)+d(z,v')+d(v',x')+d(x',w))   \\
     & ~~~~~~~~~~~~~~~~~~~~~~~~~-(d(v,v'')+d(v'',u))-(d(w,x')+d(x',x)) \\
     & \le d(v'',x'')+d(x'',x)+d(v'',z)+p+d(v',x')-d(x,x'),
\end{align*}
since   $d(z,v')\le p $ and $d(v,x)-d(v,v'')=d(v'',x'')+d(x'',x)$. Furthermore, from inequality (1), we get  
\begin{align*}
   2\delta  &< d(v'',x'')+d(v'',z)+p+d(v',x')+d(x',v')+d(v'',x'')-d(v'',z) \\
     & = 2(d(x'',v'')+d(v',x'))+p\\
     & \le 4q+p.     
     \end{align*}

When $B\ge C$ we have 
\begin{align*}
   2\delta= A-B & =d(u,w)+d(v,x)- d(v,w)-d(u,x)   \\
                & \le (d(u,x'')+d(x'',z)+d(z,v')+d(v',w))+d(v,x)   \\
     & ~~~~~~~~~~~~~~~~~~~~~~~~~-(d(v,v')+d(v',w))-(d(u,x'')+d(x'',x)) \\
     &\le d(x'',z)+p+d(v,x)-d(v,v')-d(x'',x) \\
     & = d(x'',z)+p+d(v',x)-d(x'',x) \\
     & = 2d(x'',z)+p \\ 
     &\le 2q + p.    
     \end{align*}

In all cases we got $2\delta< 4q+p$ or $2\delta\le 2q+p$, i.e.,  $\delta\le 2q+p/2$. The bound on slimness follows from Proposition \ref{prop:sl-vs-hb}. 
\end{proof} 

The proof of Proposition \ref{prop:delta-hyp-2nd} allows to get better bounds for the graphs with equilateral metric triangles. 

\begin{corollary}\label{cor:delta-hyp-2nd-equidist} 
If the intervals of a graph $G$
are $p$-thin and the metric triangles of $G$ are equilateral of size at most  $q$, then $G$ is $\frac{p + q}{2}$-hyperbolic  and, hence, $G$ has slimness at most $\frac{3(p+q)+1}{2}$. 
\end{corollary}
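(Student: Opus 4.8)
The plan is to re-run the proof of Proposition~\ref{prop:delta-hyp-2nd} essentially verbatim, keeping the same two quasi-medians $v'w'x'$ of $v,w,x$ and $v''u''x''$ of $v,u,x$, but now exploiting that both of them are \emph{equilateral} of size at most $q$. The only extra input over the general argument is that the crude bounds $d(x',v')\le q$ and $d(x'',v'')\le q$ used there are now upgraded to the equalities $d(v',w')=d(w',x')=d(x',v')\le q$ and $d(v'',u'')=d(u'',x'')=d(x'',v'')\le q$. As before I would keep all four vertices $v',x',v'',x''$ on the geodesic interval $I(v,x)$, record their positions through $d(v,\cdot)$, normalize by the assumption $d(v,v')\ge d(v,v'')$, and use $p$-thinness to place the auxiliary vertex $z$ (chosen in the same slice of $I(v,x)$ as $v'$) within distance $p$ of $v'$.

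I would then reuse the same case split. The first case ($d(v,v')\ge d(v,x'')$) already delivered the clean inequality $2\delta\le p$, with no dependence on $q$, so that case is untouched. All of the $q$-contributions live in the second case, where the general proof controls the quasi-median terms by $2d(x'',z)\le 2q$ in the subcase $B\ge C$ and by $2(d(x'',v'')+d(v',x'))\le 4q$ in the subcase $C>B$. The whole point of the refinement is to show that equilaterality forces the total $q$-contribution down from $4q$ (worst case) to a single $q$: the equal-sides relations pin the lengths $d(x'',v'')$ and $d(v',x')$ to the positions of the four points $v',x',v'',x''$ along $I(v,x)$, and I expect them to prevent the two equilateral ``ends'' $[v',x']$ and $[v'',x'']$ from being stacked additively, so that the combined quasi-median term is bounded by $q$ rather than $2q$ or $4q$. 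Granting this, each case yields $2\delta\le p+q$.

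With the uniform bound $2\delta\le p+q$ in hand, the hyperbolicity estimate $\delta\le\frac{p+q}{2}$ is immediate, and the slimness bound then follows directly from Proposition~\ref{prop:sl-vs-hb}: $\varsigma\le 3\delta+\frac{1}{2}\le\frac{3(p+q)}{2}+\frac{1}{2}=\frac{3(p+q)+1}{2}$. This last arithmetic step is routine, so the corollary reduces entirely to establishing the sharpened case analysis.

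The hard part will be exactly the squeezing step in the second paragraph, that is, turning the three equal-sides equalities into the quantitative statement that the two equilateral segments cannot be disjoint and additively concatenated along $I(v,x)$. The delicate configuration to rule out is precisely the one in which $[v',x']$ and $[v'',x'']$ occupy disjoint stretches of the interval, since that is what would reinstate the weaker $2q$/$4q$ bounds; in the subcase $C>B$ one must additionally check that the inequality playing the role of the old inequality~(1) still closes under the tighter estimate. I would attack this by a short position-counting argument on $d(v,v'),d(v,x'),d(v,v''),d(v,x'')$, aiming to show that any such additively-stacked configuration contradicts either the hypotheses of the case ($d(v,v')\ge d(v,v'')$ together with $d(v,v')<d(v,x'')$) or the defining distance identities of the two equilateral quasi-medians.
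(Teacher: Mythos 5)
Your high-level plan coincides with the paper's: rerun the proof of Proposition~\ref{prop:delta-hyp-2nd} with the same quasi-medians $v'w'x'$ and $v''u''x''$, observe that the case $d(v,v')\ge d(v,x'')$ already gives $2\delta\le p$, and sharpen only the case $d(v,v'')\le d(v,v')<d(v,x'')$. But the sharpening itself --- the entire mathematical content of the corollary --- is never carried out: you write ``I expect them to prevent\dots'' and ``Granting this\dots'', so what you have is a reduction of the statement to an unproven claim. Moreover, the mechanism you propose for that claim is not the one that works, and as described it cannot work. You want to rule out the configuration in which $[v',x']$ and $[v'',x'']$ occupy disjoint, additively stacked stretches of $I(v,x)$. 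In the only case that matters, however, the hypothesis $d(v,v'')\le d(v,v')<d(v,x'')$ already places $v'$ strictly inside the span of $[v'',x'']$, so disjoint stacking is impossible there even without any use of equilaterality --- and yet the general argument of Proposition~\ref{prop:delta-hyp-2nd}, run under exactly that hypothesis, still yields only $2q+p$ and $4q+p$. Hence positional overlap of the two segments is not what buys the improvement, and a ``position-counting argument on $d(v,v'),d(v,x'),d(v,v''),d(v,x'')$'' cannot close the gap.

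What actually does the work in the paper is the pair of exact equalities that equilaterality supplies: $d(u,x'')=d(u,v'')$ and $d(w,x')=d(w,v')$. First, they make the cross terms in $B-C$ vanish identically (the terms $d(v',w')-d(x',w')$ and $d(u'',x'')-d(v'',u'')$, which in the general proof could only be estimated from below by $-d(x',v')$ and $-d(x'',v'')$), so that $B-C=d(v'',z)+d(x'',x)-d(x,x')$ and, when $C>B$, the old inequality (1) is replaced by the much stronger $d(v'',z)<d(x,x')-d(x'',x)$. Second, in bounding $A-B$ and $A-C$ one routes $d(u,w)\le d(u,v'')+d(v'',z)+d(z,v')+d(v',w)$ through $v''$ (not through $x''$ as in the general proof) and then uses $d(u,v'')=d(u,x'')$ so this term cancels against $d(u,x)=d(u,x'')+d(x'',x)$, while $d(v',w)$ cancels against $d(w,x')=d(w,v')$; the remaining terms telescope, giving $d(v'',z)+d(z,x'')+p=d(v'',x'')+p\le q+p$ in the subcase $B\ge C$, and $d(v'',x'')+p\le q+p$ in the subcase $C>B$ after substituting the strengthened inequality. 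These cancellations exploit the metric identities of the quasi-medians themselves, not merely the positions of $v',x',v'',x''$ along $I(v,x)$, and nothing in your sketch produces them. (Your final step, $\varsigma\le 3\delta+\frac{1}{2}$ via Proposition~\ref{prop:sl-vs-hb}, is fine.)
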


\begin{proof} One can follow the proof of Proposition \ref{prop:delta-hyp-2nd} and even simplify it because in this case we have equalities $d(u,x'')=d(u,v'')$ and $d(w,x')=d(w,v')$. 

According to the more general proof of Proposition \ref{prop:delta-hyp-2nd}, we need to consider only the case when  $d(v,v'')\le d(v,v')< d(v,x'')$ (see Fig.~\ref{fig:zero-ab}(b)). 
Since $d(u,x'')=d(u,v'')$ and $d(w,x')=d(w,v')$, we have 
\begin{align*}
B-C   & =d(v,w)+d(u,x)-d(u,v)-d(w,x) \\
           & =(d(v,v')+d(v',w))+(d(u,x'')+d(x'',x))  \\
           & ~~~~~ -(d(v,v'')+d(v'',u))-(d(x,x')+d(x',w)) \\
           & =d(v,v')-d(v,v'')  +d(x'',x)-d(x,x') \\
           & =d(v'',z)+d(x'',x)-d(x,x'). 
\end{align*} 

Now, if $C>B$, then we get $2\delta=A-C$ and from above 
\begin{align}
d(v'',z) <d(x,x')-d(x'',x).
\end{align} 
Consequently, 
\begin{align*}
   2\delta= A-C & =d(v,x)+d(u,w)- d(v,u)-d(w,x)   \\
                & \le d(v,x)+(d(u,v'')+d(v'',z)+d(z,v')+d(v',w))   \\
     & ~~~~~~~~~~~~~~~~~~~~~~~~~-(d(v,v'')+d(v'',u))-(d(w,x')+d(x',x)) \\
     & \le d(v'',x)+d(v'',z)+p-d(x,x'),
\end{align*}
since $d(w,x')=d(w,v')$, $d(z,v')\le p $ and $d(v,x)-d(v,v'')=d(v'',x)$. Furthermore, from inequality (2), we get  
\begin{align*}
   2\delta  &< d(v'',x)+d(x,x')-d(x'',x)+p-d(x,x') \\
     & = d(x'',v'')+p\\
     & \le q+p.     
     \end{align*}

When $B\ge C$ we have (recall that $d(u,x'')=d(u,v'')$ and $d(v',x)=d(z,x)$)  
\begin{align*}
   2\delta= A-B & =d(u,w)+d(v,x)- d(v,w)-d(u,x)   \\
                & \le (d(u,v'')+d(v'',z)+d(z,v')+d(v',w))+d(v,x)   \\
     & ~~~~~~~~~~~~~~~~~~~~~~~~~-(d(v,v')+d(v',w))-(d(u,x'')+d(x'',x)) \\
     &\le d(v'',z)+p+d(v,x)-d(v,v')-d(x'',x) \\
     & = d(v'',z)+p+d(v',x)-d(x'',x) \\
     & = d(v'',z)+p+d(z,x)-d(x'',x) \\
     & = d(v'',z)+d(z,x'')+p \\ 
     & = d(x'',v'')+p \\ 
     &\le q + p.    
     \end{align*}

 The bound on slimness follows from Proposition \ref{prop:sl-vs-hb}.
\end{proof}

Now we turn to the graphs satisfying a  ($\lambda,\mu$)-bow metric. 

\subsection{Graphs in which side lengths of metric triangles are bounded or interval thinness is bounded. }
First we consider the graphs in which side lengths of metric triangles are bounded.

\begin{proposition} \label{prop:bow-int-thin} Let $G$ be a graph satisfying a  ($\lambda,\mu$)-bow metric. If the metric triangles of $G$ have sides of length at most $q$, then the intervals of $G$ are $p$-thin for $p\le \max\{\mu, q+2\lambda\}.$ 
\end{proposition}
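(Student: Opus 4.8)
The plan is to bound, for arbitrary vertices $u,v$ and any two vertices $x,y$ lying in a common slice $S_k(u,v)$, the distance $d(x,y)$ by $\max\{\mu,q+2\lambda\}$; this is exactly the claimed interval-thinness bound $\tau(G)\le p$. Throughout I write $m=d(u,v)-k$, so that $d(u,x)=d(u,y)=k$ and $d(v,x)=d(v,y)=m$. The metric-triangle hypothesis will control a short ``transversal'' piece of length at most $q$, while the bow metric will kill the two ``detour'' pieces as soon as either of them exceeds $\lambda$.

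First I would invoke a quasi-median $u'x'y'$ of the triple $u,x,y$, which exists by the preliminaries. Since a quasi-median is a metric triangle, the hypothesis gives $d(x',y')\le q$. From the defining equations I read off $x'\in I(u,x)\cap I(x,y)$ and $y'\in I(u,y)\cap I(x,y)$, together with the decomposition $d(x,y)=d(x,x')+d(x',y')+d(y',y)$. Writing $\alpha=d(x,x')$ and $\beta=d(y,y')$, this reads $d(x,y)=\alpha+d(x',y')+\beta\le\alpha+\beta+q$, so it only remains to control $\alpha$ and $\beta$.

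The geometric heart of the argument is that the detour segment from $x'$ to $x$ prolongs to a geodesic on \emph{both} ends, which is what makes the bow metric applicable. On one side $x'\in I(x,y)$, so the segment extends past $x'$ towards $y$; on the other side I claim $x\in I(x',v)$. Indeed, from $x'\in I(u,x)$ and $x\in I(u,v)$ the chain $d(u,v)\le d(u,x')+d(x',v)\le d(u,x')+d(x',x)+d(x,v)=d(u,x)+d(x,v)=d(u,v)$ must be tight, forcing $d(x',v)=d(x',x)+d(x,v)$, i.e. $x\in I(x',v)$. Now if $\alpha>\lambda$, I would apply the bow metric (interval form) to the quadruple $(y,x',x,v)$ — legitimate because $x'\in I(y,x)$, $x\in I(x',v)$ and the overlap $d(x',x)=\alpha>\lambda$ — to obtain $d(y,v)\ge d(y,x')+d(x',x)+d(x,v)-\mu$. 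Using $d(y,x')=d(x,y)-\alpha$ and $d(x,v)=d(y,v)=m$, this collapses to $d(x,y)\le\mu$. The symmetric application to $(x,y',y,v)$, relying on $y'\in I(x,y)$ and the analogous $y\in I(y',v)$, handles the case $\beta>\lambda$ and again yields $d(x,y)\le\mu$.

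Finally I would assemble the case analysis. If $\alpha>\lambda$ or $\beta>\lambda$, the previous step gives $d(x,y)\le\mu$; otherwise $\alpha\le\lambda$ and $\beta\le\lambda$, whence $d(x,y)=\alpha+d(x',y')+\beta\le 2\lambda+q$. In either case $d(x,y)\le\max\{\mu,q+2\lambda\}$, which is the desired bound. I expect the main obstacle to be the correct bookkeeping for the bow metric: one must notice that anchoring the overlap to a shortest $(u,v)$-path through $x$ produces only trivial inequalities, and that the nontrivial estimate appears precisely when $y$ is chosen as the far endpoint beyond $x'$ and $v$ as the far endpoint beyond $x$. Establishing $x\in I(x',v)$ cleanly (the squeeze above) is the small technical step that unlocks this.
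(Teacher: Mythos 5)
Your proof is correct and follows essentially the same route as the paper's: a quasi-median of $\{x,y\}$ together with one endpoint of the interval supplies the metric triangle with $d(x',y')\le q$, and the bow metric applied to the legs $d(x,x')$, $d(y,y')$ (using the squeeze argument to extend the geodesic through $x$, resp.\ $y$, to the opposite endpoint) bounds $d(x,y)$ by $\mu$ whenever a leg exceeds $\lambda$. The only differences are cosmetic: the paper anchors the quasi-median at $v$ and pushes the bow inequality toward $u$ (deriving a contradiction with $d(u,x)=d(u,y)=k$), whereas you anchor it at $u$, push toward $v$, and organize the conclusion as a direct case analysis rather than by contradiction.
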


\begin{proof} 
Consider arbitrary vertices $u,v$ in $G$ and arbitrary vertices $x,y\in S_k(u,v)$, where $1\le k \le d(u,v)-1$. Assume $d(x,y)> \mu$. Let $x'y'v'$ be a quasi-median of $x,y,v$. Since $x'y'v'$ is a metric triangle of $G$, we have $d(x',y')\le q$. If $d(x,x')>\lambda$, then we can apply ($\lambda,\mu$)-bow metric to $x'\in I(x,y)$ (this follows from the definition of a quasi-median; $x'$ is on a shortest path from $y$ to $x$ passing through $y'$) and $x\in I(x',u)$ (this inclusion follows from the definition of a quasi-median and of an interval; $x$ is on a shortest path from $v$ to $u$ passing through $v'$ and $x'$; see Fig. \ref{fig:one})
and get $d(u,y)\ge d(u,x)+d(x,y)-\mu >d(u,x)$ (since $d(x,y)>\mu$). As both $x$ and $y$ are at distance $k$ from $u$, we arrived at a contradiction.  Thus, $d(x,x')\le\lambda$ must hold. By symmetry,  $d(y,y')\le\lambda$ holds, too. Consequently, $d(x,y)\le d(x,x')+d(x',y')+d(y',y)\le \lambda+q+\lambda= q+2\lambda$. 

We assumed $d(x,y)> \mu$ and got $d(x,y)\le q+2\lambda$. Hence, $d(x,y)\le \max\{\mu, q+2\lambda\}.$
\end{proof}

  \begin{figure}[htb]
    \begin{center} \vspace*{-9mm}
      \begin{minipage}[b]{15cm}
        \begin{center} 
          \hspace*{1mm}
          \includegraphics[height=16cm]{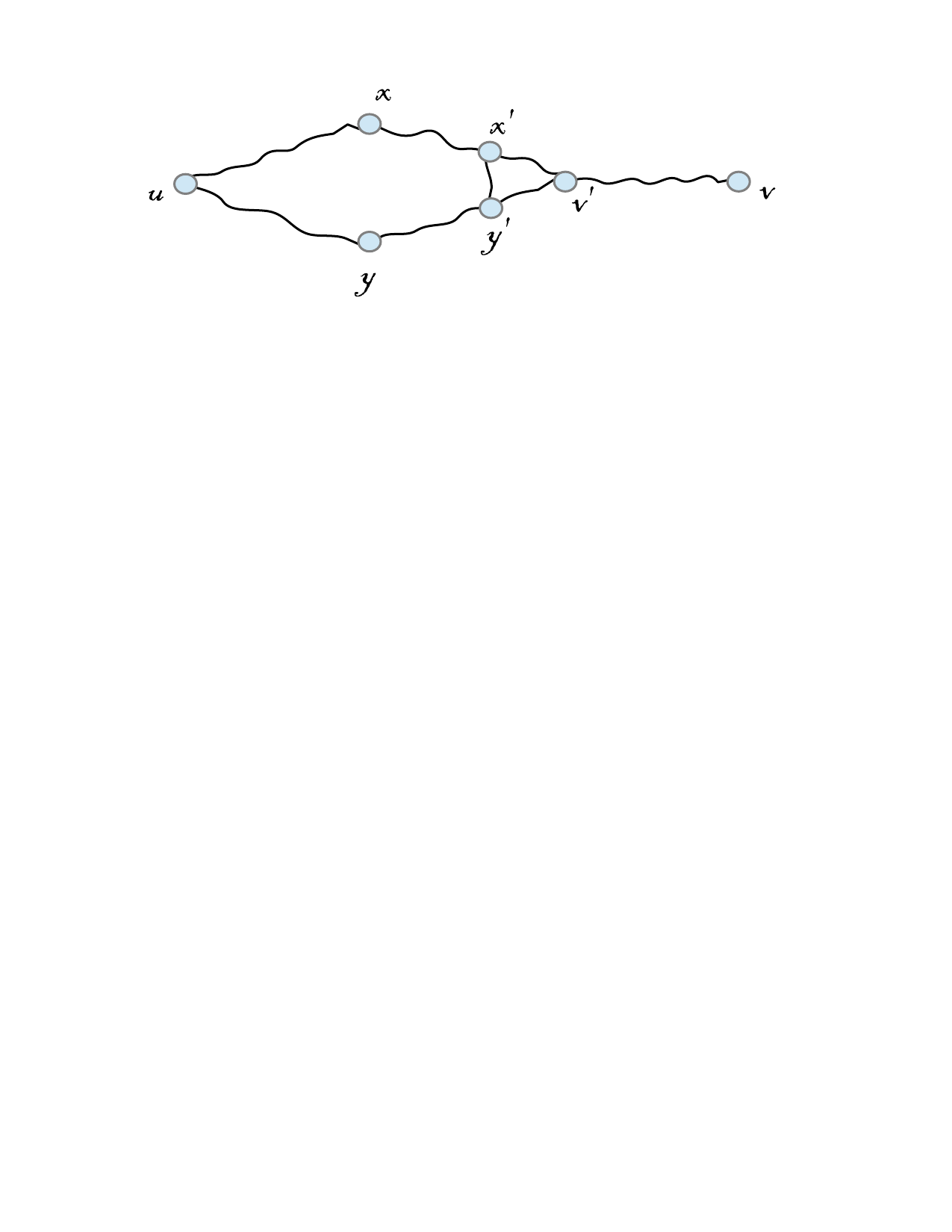}
        \end{center} \vspace*{-126mm}
        \caption{\label{fig:one} An illustration to the proof of Proposition \ref{prop:bow-int-thin}.} %
      \end{minipage}
    \end{center}
   \vspace*{-5mm}
  \end{figure}

From Proposition \ref{prop:delta-hyp-1st}, Proposition \ref{prop:delta-hyp-2nd},  and Proposition \ref{prop:bow-int-thin}, we conclude. 

\begin{theorem} \label{cor:bow-int-hyperb} Let $G$ be a graph satisfying a  ($\lambda,\mu$)-bow metric. If the metric triangles of $G$ have sides of length at most $q$, then the following inequalities hold for the hyperbolicity $\delta$ and the slimness $\varsigma$ of $G$:
\begin{align*}
   \delta\le\min & \{\frac{1}{2}\max\{\mu,q+2\lambda\}+2q, 
                      ~~4\max\{\mu,q+2\lambda\} + q+\frac{1}{2}\} \\
    \varsigma\le\min & \{\frac{3}{2}\max\{\mu,q+2\lambda\}+6q+\frac{1}{2}, 
                         ~~2\max\{\mu,q+2\lambda\}+\frac{q}{2} \}.       
     \end{align*}
\end{theorem}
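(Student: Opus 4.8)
The plan is to combine the two distinct bounds established earlier in the paper, namely Proposition~\ref{prop:delta-hyp-1st} and Proposition~\ref{prop:delta-hyp-2nd}, each of which bounds the hyperbolicity in terms of the interval thinness $p$ and the metric-triangle side length $q$, and to supply the missing quantity $p$ via Proposition~\ref{prop:bow-int-thin}. The conceptual content of the theorem is therefore just the substitution: since $G$ satisfies $(\lambda,\mu)$-bow metric and its metric triangles have sides of length at most $q$, Proposition~\ref{prop:bow-int-thin} certifies that the intervals of $G$ are $p$-thin for some $p \le \max\{\mu,\, q+2\lambda\}$. The rest is bookkeeping of two competing upper bounds, whichever happens to be smaller for a given $(\lambda,\mu,q)$.

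First I would set $p := \max\{\mu,\, q+2\lambda\}$ so that, by Proposition~\ref{prop:bow-int-thin}, the intervals of $G$ are $p$-thin and the metric triangles still have sides of length at most $q$. This places $G$ in the exact hypothesis of both Proposition~\ref{prop:delta-hyp-1st} and Proposition~\ref{prop:delta-hyp-2nd}, so both conclusions apply simultaneously. From Proposition~\ref{prop:delta-hyp-1st} I would read off $\delta \le 4p + q + \tfrac12$, and from Proposition~\ref{prop:delta-hyp-2nd} I would read off $\delta \le 2q + \tfrac{p}{2}$. Substituting $p \le \max\{\mu,\,q+2\lambda\}$ into each and taking the minimum of the two resulting expressions yields precisely the claimed bound $\delta \le \min\{\tfrac12\max\{\mu,q+2\lambda\}+2q,\; 4\max\{\mu,q+2\lambda\}+q+\tfrac12\}$.

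For the slimness bound, there are likewise two routes. One is to translate each hyperbolicity bound into a slimness bound through Proposition~\ref{prop:sl-vs-hb}, which gives $\varsigma \le 3\delta + \tfrac12$; applying this to the first hyperbolicity estimate $\delta \le 2q+\tfrac{p}{2}$ produces $\varsigma \le \tfrac32 p + 6q + \tfrac12 = \tfrac32\max\{\mu,q+2\lambda\}+6q+\tfrac12$. The other is to use the direct slimness estimate already contained in Proposition~\ref{prop:delta-hyp-1st}, namely slimness at most $2p + \tfrac{q}{2}$, which gives $\varsigma \le 2\max\{\mu,q+2\lambda\}+\tfrac{q}{2}$ without any loss through the slimness--hyperbolicity conversion. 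Taking the minimum of these two slimness estimates reproduces the second line of the theorem.

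I do not expect a genuine obstacle here, since the statement is essentially a corollary assembling already-proved ingredients; the only point requiring care is the choice of which bound for $p$ to thread through which proposition, so that one does not carelessly pass through Proposition~\ref{prop:sl-vs-hb} where a tighter direct slimness bound from Proposition~\ref{prop:delta-hyp-1st} is available. Concretely, the mild subtlety is that the slimness bound $2p+\tfrac{q}{2}$ from Proposition~\ref{prop:delta-hyp-1st} is sharper than what one would obtain by converting the hyperbolicity bound $4p+q+\tfrac12$ via $\varsigma\le 3\delta+\tfrac12$, so one must keep the two slimness estimates as independent entries of the final minimum rather than deriving both from hyperbolicity. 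With $p$ fixed as above, each of the four inequalities is an immediate substitution, and I would simply record them and take the two indicated minima.
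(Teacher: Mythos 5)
Your proposal is correct and follows exactly the paper's own route: the paper derives this theorem precisely by combining Proposition~\ref{prop:bow-int-thin} (to get interval thinness $p\le\max\{\mu,q+2\lambda\}$) with Propositions~\ref{prop:delta-hyp-1st} and~\ref{prop:delta-hyp-2nd}, taking the minimum of the two resulting hyperbolicity bounds and of the two resulting slimness bounds. Your observation about keeping the direct slimness estimate $2p+\frac{q}{2}$ from Proposition~\ref{prop:delta-hyp-1st} rather than converting through Proposition~\ref{prop:sl-vs-hb} is exactly how the stated constants arise.
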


Note that $\alpha_2$-metric graphs give an example of graph classes where the metric triangles have unbounded side length, yet all $\alpha_2$-metric graphs are (9/2)-hyperbolic (see \cite{alpha-hyperb}).  The intervals of $\alpha_i$-metric graphs are $(i+1)$-thin \cite{alpha-hyperb}. 

A set $S\subseteq V$ of a graph $G=(V,E)$ is called {\em $\lambda$-set} if $d_G(x,y)\le \lambda$ holds for every $x,y\in S$. Clearly, a $0$-set is a singleton and a $1$-set is a complete subgraph  (a clique) of $G$. A graph $G=(V,E)$ is called a {\em $\lambda$-generalized Helly graph} if for any subset $M\subseteq V$ and any $r$-function $r: M\rightarrow R^+\cup\{0\}$, inequalities $d(x,y)\leq r(x)+r(y)+\lambda$, for every $x,y\in M$, imply that there is a $\lambda$-set $S$ in $G$ such that $d(v,S)\leq r(v)$ for all $v\in M$~\cite{GenHelly}. The set $S$ is called an $r$-dominating set of $M$. Equivalently, one says that $S$ $r$-dominates $M$.  
The $0$-generalized Helly graphs are exactly the Helly graphs, i.e., the graphs whose balls satisfy the {\em Helly property} (that
is, every collection of pairwise intersecting balls has a nonempty common intersection).  
It is known \cite{GenHelly} that the length of the longest side of any metric triangle of a $k$-generalized Helly graph is at most $\max\{k+1,2k\}$. Hence, for $k>0$, we have the following Corollary \ref{cor:bow-gH-hyperb}. For the case of Helly graphs, i.e, when $k=0$, see Section \ref{subsec:MedianHellyETC}.

 \begin{corollary} \label{cor:bow-gH-hyperb} Let $G$ be a $k$-generalized Helly graph $(k>0)$ satisfying a  ($\lambda,\mu$)-bow metric.  
 Then the following inequalities are true for the hyperbolicity $\delta$ and the slimness $\varsigma$ of $G$:
\begin{align*}
   \delta\le\min & \{\frac{1}{2}\max\{\mu,2(k+\lambda)\}+4k, 
                      ~~4\max\{\mu,2(k+\lambda)\} + 2k+\frac{1}{2}\} \\
    \varsigma\le\min & \{\frac{3}{2}\max\{\mu,2(k+\lambda)\}+12k+\frac{1}{2}, 
                         ~~2\max\{\mu,2(k+\lambda)\}+k\}.       
     \end{align*}
\end{corollary}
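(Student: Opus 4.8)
The plan is to obtain this statement as a direct specialization of Theorem~\ref{cor:bow-int-hyperb}, the only extra ingredient being an upper bound on the side lengths of metric triangles in $k$-generalized Helly graphs. First I would invoke the cited result of~\cite{GenHelly} recalled just above the statement: in any $k$-generalized Helly graph, the longest side of any metric triangle has length at most $\max\{k+1,2k\}$. This converts the structural hypothesis ``$G$ is $k$-generalized Helly'' into a concrete bound $q$ on the maximum side length of metric triangles, which is exactly the form of the hypothesis required by Theorem~\ref{cor:bow-int-hyperb}.

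Next I would pin down the value of $q$ under the standing assumption $k>0$. Since $k$ is a positive integer, we have $2k\ge k+1$, so $\max\{k+1,2k\}=2k$; hence every metric triangle of $G$ has all sides of length at most $q:=2k$. Together with the hypothesis that $G$ satisfies $(\lambda,\mu)$-bow metric, both premises of Theorem~\ref{cor:bow-int-hyperb} are now met, so I may apply it verbatim with this choice of $q$.

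Finally I would carry out the substitution $q=2k$ in the four bounds supplied by Theorem~\ref{cor:bow-int-hyperb}. Using the identities $q+2\lambda=2(k+\lambda)$, $2q=4k$, $q+\tfrac12=2k+\tfrac12$, $6q+\tfrac12=12k+\tfrac12$, and $\tfrac{q}{2}=k$, the theorem's expressions $\tfrac12\max\{\mu,q+2\lambda\}+2q$, $4\max\{\mu,q+2\lambda\}+q+\tfrac12$, $\tfrac32\max\{\mu,q+2\lambda\}+6q+\tfrac12$, and $2\max\{\mu,q+2\lambda\}+\tfrac{q}{2}$ transform precisely into the four quantities displayed in the corollary, yielding the stated inequalities for the hyperbolicity $\delta$ and the slimness $\varsigma$.

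I do not expect any genuine obstacle here: the content is entirely carried by Theorem~\ref{cor:bow-int-hyperb} and by the external bound on metric-triangle side lengths, and the remaining work is the routine arithmetic of substituting $q=2k$. The one point deserving an explicit sentence is the reduction $\max\{k+1,2k\}=2k$, which is exactly where the restriction $k>0$ is used and which explains why the case $k=0$ (Helly graphs) is deferred to Section~\ref{subsec:MedianHellyETC}, where the weaker bound $q\le k+1=1$ would otherwise give a needlessly loose estimate.
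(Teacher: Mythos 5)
Your proposal is correct and follows exactly the paper's (implicit) argument: the paper likewise invokes the bound $\max\{k+1,2k\}$ from \cite{GenHelly}, notes that for $k>0$ this equals $2k$, and then applies Theorem~\ref{cor:bow-int-hyperb} with $q=2k$, which gives precisely the stated bounds after the same routine substitution. Your closing remark about why $k=0$ is deferred to Section~\ref{subsec:MedianHellyETC} is also consistent with the paper, which obtains sharper bounds for Helly graphs there by other means.
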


($\lambda,\mu$)-Bow metrics naturally generalize $\alpha_i$-metrics. From \cite{alpha-centers,alpha-hyperb} we know that, although any $\alpha_i$-metric graph has the interval thinness at most $i+1$, there are even $\alpha_1$-metric graphs whose $1$-subdivision graphs satisfy $\alpha_{i}$-metrics only for $i=\Omega(n)$. However, this generalization is more robust. The $1$-subdivision of a ($\lambda,\mu$)-bow metric graph must satisfy ($2\lambda+2,2\mu+2$)-bow metric. Recall that the $1$-subdivision graph $\Sigma(G)$ of a graph $G$ is obtained by replacing all edges $e=uv$ of $G$ by internally vertex-disjoint paths $[u,e,v]$ of length two. 

\begin{lemma} \label{lm:1-subdiv}
    If a graph $G$ satisfies $(\lambda,\mu)$-bow metric, then its $1$-subdivision $H$ satisfies ($2\lambda+2,2\mu+2$)-bow metric. 
\end{lemma}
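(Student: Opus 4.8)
The plan is to transfer a bow configuration from $H$ back to $G$ by projecting the four witnessing vertices onto original vertices of $G$, exploiting that $H$ doubles $G$ on original vertices. The elementary distance facts I would record first are these: for original $a,b\in V(G)$ one has $d_H(a,b)=2d_G(a,b)$ (a $G$-path of $k$ edges becomes an $H$-path of $2k$ edges, and no shortcuts appear since originals meet only through subdivision vertices); a subdivision vertex $m_e$ replacing an edge $e=ab$ is adjacent in $H$ only to the originals $a$ and $b$, so $d_H$ is even between two originals, odd between an original and a subdivision vertex, and even between two subdivision vertices. This parity bookkeeping is what ultimately pays for the additive constant.

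Now fix $u,v,w,x\in V(H)$ with $v\in I_H(u,w)$, $w\in I_H(v,x)$ and $d_H(v,w)>2\lambda+2$, together with shortest paths realizing these inclusions. I replace $v,w$ by the original vertices $\bar v,\bar w$ lying on the common $v$--$w$ subpath nearest $v$ and $w$ (so $\bar v=v$, $\bar w=w$ when already original, else the adjacent original along the overlap), and I replace $u,x$ by the original neighbour $\bar u$ of $u$ (resp.\ $\bar x$ of $x$) along the fixed shortest path towards $v$ (resp.\ $w$), again keeping $\bar u=u$, $\bar x=x$ when already original. Chasing the interval identities along the fixed paths, all four projected vertices inherit the geodesic structure: $\bar u\in I_H(u,\bar v)\subseteq I_H(u,w)$, and one deduces $\bar v\in I_H(\bar u,\bar w)$ and $\bar w\in I_H(\bar v,\bar x)$, hence $\bar v\in I_G(\bar u,\bar w)$ and $\bar w\in I_G(\bar v,\bar x)$ since the $\bar\cdot$ are original. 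Writing $a:=d_H(u,\bar u)$, $b:=d_H(v,\bar v)$, $c:=d_H(w,\bar w)$, $e:=d_H(x,\bar x)$, each in $\{0,1\}$, the threshold is immediate: $d_H(\bar v,\bar w)=d_H(v,w)-b-c>2\lambda+2-2=2\lambda$, so $d_G(\bar v,\bar w)>\lambda$. Crucially, the same identities give the exact telescoping $d_H(\bar u,\bar v)+d_H(\bar v,\bar w)+d_H(\bar w,\bar x)=d_H(u,v)+d_H(v,w)+d_H(w,x)-a-e$, in which the middle rounding terms $b,c$ cancel; thus rounding $v$ and $w$ is free, and only the two ends $u,x$ can contribute error.

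With the threshold secured I apply the $(\lambda,\mu)$-bow metric of $G$ to $\bar u,\bar v,\bar w,\bar x$ and multiply by $2$, obtaining $d_H(\bar u,\bar x)\ge d_H(\bar u,\bar v)+d_H(\bar v,\bar w)+d_H(\bar w,\bar x)-2\mu$. Combining with the telescoping identity, $d_H(\bar u,\bar x)\ge d_H(u,v)+d_H(v,w)+d_H(w,x)-(a+e)-2\mu$. It then remains only to compare $d_H(u,x)$ with $d_H(\bar u,\bar x)$: if I can guarantee $d_H(u,x)\ge d_H(\bar u,\bar x)-(2-a-e)$, the two estimates combine to the desired $d_H(u,x)\ge d_H(u,v)+d_H(v,w)+d_H(w,x)-(2\mu+2)$. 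When at most one of $u,x$ is a subdivision vertex (so $a+e\le 1$) this follows from the plain triangle inequality $d_H(u,x)\ge d_H(\bar u,\bar x)-a-e$, and the cases $a+e\in\{0,1\}$ even give the stronger defect $2\mu$ or $2\mu+1$.

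The main obstacle is the remaining case where both $u$ and $x$ are subdivision vertices, $a=e=1$: here the naive triangle inequality only yields $d_H(u,x)\ge d_H(\bar u,\bar x)-2$, and hence defect $2\mu+4$, so the sharp constant must be earned. Writing $u=m_e$, $x=m_f$ with $e=a_0a_1$, $f=c_0c_1$ and $\bar u=a_0$, $\bar x=c_0$ the inward endpoints, one has $d_H(u,x)=2+2\min_{i,j}d_G(a_i,c_j)$ while $d_H(\bar u,\bar x)=2d_G(a_0,c_0)$, so what is needed is exactly $\min_{i,j}d_G(a_i,c_j)\ge d_G(a_0,c_0)-1$, equivalently $d_H(u,x)\ge d_H(\bar u,\bar x)$ (which with $a+e=2$ gives defect $2\mu+2$ precisely). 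Only the doubly-switched term $d_G(a_1,c_1)$ can a priori be two smaller than $d_G(a_0,c_0)$; I would rule this out using that the inward endpoints are forced, by $\bar u\in I_H(u,w)$ and $\bar x\in I_H(v,x)$, to be the endpoints of $e$ and $f$ nearest the overlap $\bar v\bar w$, so at most one of the two endpoint switches can decrease the distance. Establishing this inequality rigorously from the interval placement of $a_0,c_0$ together with the edge-parity of $H$ is the technical heart of the proof, and it is precisely where the constant $2\mu+2$ (rather than a weaker $2\mu+4$) comes from.
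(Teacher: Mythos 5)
Your proposal follows the same route as the paper's proof: project the four vertices of $H$ onto nearby original vertices, transfer the configuration to $G$, apply the $(\lambda,\mu)$-bow metric there, and pull the estimate back to $H$. Your bookkeeping for the transfer (the interval inclusions, the threshold $d_G(\bar v,\bar w)>\lambda$, the telescoping identity, and the cases $a+e\le 1$) is correct. The gap is the case you yourself flag as unresolved, where both ends are subdivision vertices; and the claim you propose to prove there is in fact \emph{false}, so your plan cannot be completed as described. Take $G=C_9$ with vertices $g_0,\dots,g_8$ in cyclic order; $C_9$ satisfies the $(1,3)$-bow metric, and $\mu=3$ is optimal for $\lambda=1$. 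In $H=C_{18}$ let $u$ be the subdivision vertex of the edge $g_0g_1$, $v=g_8$, $w=g_5$, and $x$ the subdivision vertex of the edge $g_3g_4$. Then $v\in I_H(u,w)$, $w\in I_H(v,x)$, and $d_H(v,w)=6>2\lambda+2=4$, so the quadruple is subject to the lemma, and the inward endpoints are exactly your $a_0=\bar u=g_0$ and $c_0=\bar x=g_4$. However $d_G(g_0,g_4)=4$ while $d_G(g_1,g_4)=d_G(g_0,g_3)=3$ and $d_G(g_1,g_3)=2$: \emph{both} endpoint switches decrease the distance, $\min_{i,j}d_G(a_i,c_j)=d_G(a_0,c_0)-2$, and $d_H(u,x)=6<8=d_H(\bar u,\bar x)$. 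So the inequality $d_H(u,x)\ge d_H(\bar u,\bar x)$ to which you reduce this case does not follow from the interval placement of $a_0,c_0$, nor even from the bow property of $G$, since all of those hypotheses hold here. (The lemma's conclusion itself survives in this example, $d_H(u,x)=6\ge 3+6+3-(2\mu+2)=4$, but only because the bow inequality you invoked in $G$, namely $d_G(g_0,g_4)\ge 1+3+1-\mu$, is slack by $2$ --- not because the doubly-switched distance is controlled.)

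For a fair comparison: the paper's own proof does not handle this case either. It stops at the inequality $d_H(x',y')\ge d_H(x,u')+d_H(u',y)-2\mu-2$ (in the paper's notation), and since $d_H(x,u')=d_H(x',u')-d_H(x',x)$ and $d_H(u',y)=d_H(u',v')+d_H(v',y')-d_H(y',y)$, this falls short of the stated conclusion by $d_H(x',x)+d_H(y',y)$, which equals $2$ precisely when both ends are subdivision vertices; as written, both the paper's argument and yours establish only a $(2\lambda+2,2\mu+4)$-bow metric for $H$. Your merit is to have made the problematic case explicit and to have located exactly where the constant $2\mu+2$ must be earned. But closing it needs a different ingredient: one must show that whenever both endpoint switches decrease the distance, the bow inequality applied in $G$ to the inner endpoints cannot be tight (slack at least $1$), and the $C_9$ example shows this cannot be obtained from a purely metric argument about endpoint placement --- it requires further, global applications of the $(\lambda,\mu)$-bow property of $G$, which neither your proposal nor the paper supplies.
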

\begin{proof}
    Let $x',u',v',y'$ ($x'\neq u'$, $y'\neq v'$), be vertices of $H$ such that $u' \in I_H(x',v')$, $v' \in I_H(u',y')$ and $d_H(u',v') > 2\lambda+2$. Let $x$ (resp., $y$) be a  vertex of $G$ closest to $x'$ (resp., $y'$) on some shortest path of $H$ connecting $x'$ (resp., $y'$) with $u'$ (resp., $v'$). 
    We pick also two vertices $u,v$ of $G$ that are at maximal distance on some shortest path $P_H(u',v')$.
    Note that $d_H(u,u'),d_H(v,v'),d_H(x,x'),d_H(y,y') \le 1$.
    In particular, $d_H(u,v) \ge d_H(u',v') -2 > 2\lambda$, and therefore $d_G(u,v) > \lambda$.
    Since $G$ satisfies $(\lambda,\mu)$-bow metric, $d_G(x,y) \ge d_G(x,u) + d_G(u,y) - \mu$.
    In particular, $d_H(x,y) \ge d_H(x,u) + d_H(u,y) - 2\mu$.
    Since $u$ is on a shortest path $P_H(u',v')$, $u' \in I_H(x,u)$ and $u \in I_H(u',y)$, which implies $d_H(x,u) + d_H(u,y) = d_H(x,u') + d_H(u',y)$.
    Hence, $d_H(x,y) \ge d_H(x,u') + d_H(u',y) - 2\mu$. Since, $d_H(x,y)\le d_H(x',y')+2$, we get $d_H(x',y') \ge d_H(x,u') + d_H(u',y) - 2\mu-2$.
\end{proof}



Thus, we can state the following. 
\begin{theorem} \label{cor2:bow-int-hyperb}  If in every graph satisfying ($\lambda,\mu$)-bow metric  ($\lambda\ge 0, \mu\ge 0$) the interval thinness is  bounded by $f(\lambda,\mu)$, then the hyperbolicity of a graph satisfying a ($\lambda,\mu$)-bow metric is at most doubly exponential in $O(f(\lambda,\mu))$. 
\end{theorem}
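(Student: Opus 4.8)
The plan is to chain together three ingredients already available in the excerpt: Lemma~\ref{lm:1-subdiv}, describing how a bow metric behaves under $1$-subdivision; the hypothesis of the theorem, that satisfying a bow metric forces bounded interval thinness; and Papasoglu's theorem, recalled in the Preliminaries, which bounds the hyperbolicity of a graph by a doubly exponential function of the interval thinness of its $1$-subdivision. The entire argument is essentially a composition of these three facts, so the work lies in the bookkeeping of parameters rather than in any new combinatorial insight.

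First I would fix an arbitrary graph $G$ satisfying $(\lambda,\mu)$-bow metric and pass to its $1$-subdivision $H=\Sigma(G)$. By Lemma~\ref{lm:1-subdiv}, $H$ satisfies $(2\lambda+2,2\mu+2)$-bow metric. The point of this step is exactly that the bow-metric property survives subdivision, with only a controlled affine blow-up of the two parameters; this is why the \emph{robust} notion of bow metric is the correct tool here, in contrast to the $\alpha_i$-metric, which (as noted in the paragraph preceding the lemma) can degrade to $\alpha_i$ only for $i=\Omega(n)$ under subdivision and hence gives no uniform control.

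Next I would invoke the hypothesis, now applied at the parameters $(2\lambda+2,2\mu+2)$: since $H$ satisfies $(2\lambda+2,2\mu+2)$-bow metric, its interval thinness is at most $f(2\lambda+2,2\mu+2)$. Then Papasoglu's theorem, applied with $\Sigma(G)=H$, bounds the hyperbolicity of $G$ by a doubly exponential function of the interval thinness of $H$. Composing the two bounds yields that the hyperbolicity of $G$ is at most doubly exponential in $f(2\lambda+2,2\mu+2)$.

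The one point needing care, and the main obstacle to phrasing the conclusion as cleanly as stated, is reconciling the quantity $f(2\lambda+2,2\mu+2)$ produced by the argument with the advertised $O(f(\lambda,\mu))$. Because the subdivided parameters are only an affine function of $\lambda$ and $\mu$, one has $f(2\lambda+2,2\mu+2)=O(f(\lambda,\mu))$ under the mild regularity assumption that $f$ grows at most polynomially in its arguments; alternatively, reading the bound for fixed $\lambda,\mu$, this overhead is simply absorbed into the implicit constant of the $O(\cdot)$. With that identification, the desired doubly exponential bound follows immediately, the substantive content having been front-loaded entirely into Lemma~\ref{lm:1-subdiv} and the external Papasoglu estimate.
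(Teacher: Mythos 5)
Your proposal is correct and follows exactly the route the paper intends: the paper's own proof is just the one-sentence citation of Papasoglu's theorem, with the application of Lemma~\ref{lm:1-subdiv} and of the hypothesis at the shifted parameters $(2\lambda+2,2\mu+2)$ left implicit. Your writeup spells out this composition in full, and your closing caveat --- that $f(2\lambda+2,2\mu+2)$ only equals $O(f(\lambda,\mu))$ under a mild regularity assumption on $f$, or by absorbing the shift into the constant for fixed $\lambda,\mu$ --- is a legitimate subtlety that the paper's statement glosses over.
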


\begin{proof}
  Papasoglu~\cite{Pap95} proved that the hyperbolicity of $G$ is at most doubly exponential in the interval thinness of $\Sigma(G)$. 
\end{proof}

Theorem \ref{cor:bow-int-hyperb}  and Theorem \ref{cor2:bow-int-hyperb} suggest the following interesting remark. 

\begin{remark}
    If there exists a counterexample graph 
    to our conjecture, it must satisfy ($\lambda,\mu$)-bow metric but have unbounded side lengths of metric triangles  and an unbounded interval thinness. 
\end{remark}

\subsection{Meshed Graphs}\label{sec:meshed-gr} 
By establishing a new characterization of meshed graphs, we prove here that our conjecture is true for all meshed graphs. Meshed graphs  with a ($\lambda,\mu$)-bow metric are $\delta$-hyperbolic for some $\delta$ linearly depending only on $\lambda$ and $\mu$. The hyperbolicity constant can be further sharpened for Helly graphs, median graphs, $k$-generalized Helly graphs,  graphs with convex balls, and others. 

A graph $G$ is called {\em meshed} \cite{meshed} (see also \cite{BaCh-survey}) if for any three vertices $u, v, w$ with $d(v, w) = 2$, there exists a common neighbor $x$ of $v$ and $w$ such that $2d(u, x) \le d(u, v) + d(u, w)$. Meshed graphs are thus characterized by some (weak) convexity
property of the distance functions $d(\cdot, u)$ for $u\in V$ (see \cite{meshed,BaCh-survey}). This condition ensures that all
balls centered at complete subgraphs of a meshed graph $G$ induce isometric subgraphs and 
that every cycle can be written as a modulo 2 sum of cycles of lengths 3 and 4 (see \cite{BaCh-survey}). 


A graph $G$ is weakly modular~\cite{BaCh1996,BaMu1991,Ch1989-m-tr}
if its distance function $d(\cdot,\cdot)$ satisfies the
following conditions: \vspace{1mm} \\
{\em triangle condition}: for any three vertices $u, v, w$ with $1 = d(v, w) < d(u, v) =d(u, w)$ there exists a common neighbor $x$ of $v$ and $w$ such that $d(u, x) = d(u, v)-1$;\vspace{1mm} \\
{\em quadrangle condition}: for any four vertices $u, v, w, z$ with $d(v, z) = d(w, z) = 1$ and
$2 = d(v, w) \le d(u, v) = d(u, w) = d(u, z) -1$, there exists a common neighbor $x$ of
$v$ and $w$ such that $d(u, x) = d(u, v) -1$. \medskip

Modular graphs \cite{modular}, pseudo-modular graphs
\cite{ps-modular}, pre-median graphs \cite{Chastand}, weakly median graphs \cite{weakly-median}, quasi-median graphs \cite{ChaGraSa89,Chastand97,Wilkeit}, dual polar
graphs \cite{Cam1982-d-polar}, median graphs \cite{Bandelt1984,BanHed1983,Isbell1980,Mulder1980,van-de-Vel-book}, distance-hereditary graphs \cite{BaMu1986-dhg,Howorka}, bridged graphs \cite{bridged-FJ,bridged-SCh}, Helly graphs \cite{BanPesch,BanPris,Dr_thesis,Dr_Helly,NowRiv,Quil}, chordal graphs \cite{Ch1986-ch}, and dually chordal graphs \cite{DuChGr,Dr_HT,DrPrCh} are all instances of weakly modular  graphs. 
All weakly modular graphs are meshed~\cite{BaCh-survey}. 
Basis graphs of matroids \cite{Maurer73} and even $\Delta$-matroids \cite{delta-matr},   1-generalized Helly graphs \cite{GenHelly} as well as the graphs in which all median sets induce connected or isometric subgraphs \cite{conn-med} are meshed but in general not weakly modular. The icosahedron graph constitutes another example of a meshed graph
that is not weakly modular. 

First, for completeness, we present a simple proof of the following fact mentioned in \cite{BaCh-survey}.  
\begin{lemma} \label{lm:isom-balls} 
In a  meshed graph each ball induces an isometric subgraph.  
\end{lemma}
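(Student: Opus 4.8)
The plan is to reduce the lemma to the following claim: for any vertex $c$ and any two vertices $a,b$, there is a shortest $(a,b)$-path all of whose vertices $z$ satisfy $d(c,z)\le \max\{d(c,a),d(c,b)\}$. Once this is established, isometry of a ball $B(c,r)$ is immediate: if $a,b\in B(c,r)$ then $\max\{d(c,a),d(c,b)\}\le r$, so the guaranteed shortest $(a,b)$-path stays inside $B(c,r)$, which means that the distance measured in the subgraph induced by $B(c,r)$ agrees with $d_G$.

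To prove the claim I would use a minimality/exchange argument rather than an induction. Fix $a,b$ and set $R=\max\{d(c,a),d(c,b)\}$. Among all shortest $(a,b)$-paths of $G$ (there are finitely many) choose one, say $P=(a=z_0,z_1,\dots,z_k=b)$, that minimizes the potential $\Phi(P)=\sum_{i=0}^{k} d(c,z_i)$. I then claim that every vertex of this optimal $P$ lies within distance $R$ of $c$.

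Suppose not, and let $m=\max_i d(c,z_i)>R$. Since the endpoints satisfy $d(c,a),d(c,b)\le R<m$, the value $m$ is attained only at internal vertices; let $z_i$ (with $1\le i\le k-1$) be the one of smallest index attaining $m$. By minimality of $i$ the predecessor satisfies $d(c,z_{i-1})<m$ (it is either an endpoint, hence at distance $\le R<m$, or an earlier vertex whose distance to $c$ is not $m$), while $d(c,z_{i+1})\le m$. Because $z_{i-1}$ and $z_{i+1}$ are two apart along a geodesic we have $d(z_{i-1},z_{i+1})=2$, so the meshed condition applied to the triple $c,z_{i-1},z_{i+1}$ yields a common neighbor $x$ with $2\,d(c,x)\le d(c,z_{i-1})+d(c,z_{i+1})<2m$, i.e.\ $d(c,x)<m=d(c,z_i)$. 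Replacing $z_i$ by $x$ produces a walk from $a$ to $b$ of the same length $k=d(a,b)$, which is therefore again a shortest $(a,b)$-path, and whose potential is strictly smaller than $\Phi(P)$ because only the $i$-th summand changed and it decreased. This contradicts the choice of $P$, proving the claim and hence the lemma.

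The only delicate point, and the step I would treat most carefully, is guaranteeing that the exchange \emph{strictly} decreases the potential: the bare meshed inequality gives only $d(c,x)\le m$ when both path-neighbors of $z_i$ already lie at distance $m$. This is precisely why I select the \emph{first} vertex (in path order) realizing the maximum, which forces $d(c,z_{i-1})<m$ and pushes the average, and hence $d(c,x)$, strictly below $m$. A secondary, routine check is that the spliced sequence is still a genuine shortest path, but this is automatic: any walk of length $d(a,b)$ between $a$ and $b$ is a geodesic, and in particular simple, so no separate argument is needed. (One also notes that in the contradiction case $c\notin\{z_{i-1},z_i,z_{i+1}\}$, since otherwise $m=d(c,z_i)\le 1$ would force $R=0$ and $a=b=c$, incompatible with $k\ge 2$.)
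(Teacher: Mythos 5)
Your proof is correct and follows essentially the same route as the paper's: choose a shortest path minimizing the sum of distances to the center, then use meshedness to replace a farthest vertex and strictly decrease that potential, yielding a contradiction. The only cosmetic differences are that you select the \emph{first} maximal-distance vertex while the paper selects the one closest to the other endpoint, and that you phrase the claim with $R=\max\{d(c,a),d(c,b)\}$ instead of the ball radius; both choices serve the same purpose of forcing one path-neighbor strictly below the maximum.
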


\begin{proof} Consider in a meshed graph $G$ a ball $B(w,k)$ and two arbitrary vertices $u,v\in B(w,k)$. Among all shortest paths connecting $u$ and $v$ in $G$ consider a shortest path $P(u,v)$ whose sum $\sigma:=\sum_{x\in P(u,v)} d(x,w)$ is smallest. Consider the closest to $v$ vertex $y$ of $P(u,v)$ such that $d(y,w)=\max\{d(x,w): x\in P(u,v)\}$ (i.e., $y$ is a  vertex of  $P(u,v)$ most distant from $w$). If $d(y,w)\le k$ then we are done - all vertices of $P(u,v)$ are in $B(w,k)$.  If $d(y,w)>k$, then we can apply the meshedness of $G$ to $v',y,u'$, where $v'$ ($u'$ respectively) is the neighbor of $y$ in  $P(u,v)$ closest to $v$ (to $u$, respectively). 
  Since $d(v',w)<d(y,w)$ and $d(u',w)\le d(y,w)$ by the choice of $y$, the meshedness of $G$ implies that there exists a vertex $y'$ adjacent to $u',v'$ and at distance less than $d(w,y)$ from $w$. 
  The latter contradicts our assumption that  $P(u,v)$ had smallest sum $\sigma$; replacing $y$ with $y'$ in $P(u,v)$ will produce a shortest path with a smaller sum. 
\end{proof}

The metric triangles of meshed graphs are equilateral \cite{conn-med}. Metric triangles of
weakly modular graphs are somewhat more special: namely, a graph $G$ is weakly
modular if and only if for every metric triangle $uvw$ all vertices of the interval
$I(u, v)$ are at the same distance $k= d(u, w)$ from $w$ \cite{Ch1989-m-tr} (such metric triangles are called {\em strongly equilateral} \cite{Ch1989-m-tr}). In the next Theorem \ref{th:metric-tr}, we generalize this to a characterization of meshed graphs by exchanging $I(u, v)$ by the existence of a shortest path $P(u, v)$ with the same condition. 


\begin{theorem} \label{th:metric-tr} 
A graph $G$ is meshed if and only if for every metric triangle $uvw$ there is a shortest path $P(u,v)$ connecting $u$ and $v$ such that 
all vertices of $P(u,v)$ are at the same distance from $w$. 
\end{theorem}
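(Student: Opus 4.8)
The plan is to prove the two implications separately, using throughout that the metric triangles of a meshed graph are equilateral \cite{conn-med}. Thus in the forward direction the common distance can only be $k:=d(u,v)=d(v,w)=d(w,u)$, and in the converse direction the very requirement that all vertices of $P(u,v)$ (whose endpoints are $u$ and $v$) be equidistant from $w$ already forces $d(u,w)=d(v,w)$; applying the hypothesis to all three labelled pairs of a metric triangle then forces equilaterality for free, so both sides speak about equilateral triangles and ``the same distance'' means ``distance $k$''.

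For the \emph{if} direction I would start from arbitrary $u,v,w$ with $d(v,w)=2$ and exhibit a common neighbour $x$ of $v$ and $w$ with $2d(u,x)\le d(u,v)+d(u,w)$. I would fix a quasi-median $u'v'w'$ of $u,v,w$; being a metric triangle, the hypothesis makes it equilateral of some size $s=d(v',w')$ and, crucially, supplies geodesics along its sides whose vertices are equidistant from the opposite apex. The $(v,w)$-equation of the quasi-median reads $d(v,v')+s+d(w',w)=2$, so $s\in\{0,1,2\}$. When $s\le 1$ the quasi-median already contains a common neighbour of $v$ and $w$ (for instance, if $s=1$ and $w'=w$ then $v'$ is adjacent to both $v$ and $w$; if $s=0$ and the median splits the side as $1+1$ the median itself works), and a one-line distance estimate gives the inequality; the degenerate splittings such as $z=w$ are absorbed by taking any common neighbour of $v,w$. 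The decisive case is $s=2$: then $v'=v$, $w'=w$, the triple $u'vw$ is an equilateral metric triangle of size $2$ with $u'\in I(u,v)\cap I(u,w)$ and $d(u,v)=d(u,w)=d(u,u')+2$. Here I would invoke the hypothesis on $u'vw$ for the pair $(v,w)$: it yields a $(v,w)$-geodesic of length two whose middle vertex $x$ satisfies $d(u',x)=2$. This $x$ is a common neighbour of $v$ and $w$, and $d(u,x)\le d(u,u')+d(u',x)=d(u,u')+2=\tfrac12\bigl(d(u,v)+d(u,w)\bigr)$. This is exactly where the sphere-geodesic property, rather than mere equilaterality, is needed.

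For the \emph{only if} direction, fix an equilateral metric triangle $uvw$ of size $k$ and seek a $(u,v)$-geodesic lying entirely on the sphere $\{p\in V: d(p,w)=k\}$. Two one-sided bounds come cheaply. By Lemma \ref{lm:isom-balls} the ball $B(w,k)$ induces an isometric subgraph, so there is a $(u,v)$-geodesic $u=p_0,\dots,p_k=v$ with $d(p_i,w)\le k$ for every $i$; this is the upper bound. For a matching lower bound near the ends I would use that $uvw$ is a metric triangle: an internal vertex $p_i$ lies in neither $I(u,w)$ nor $I(v,w)$, so $d(p_i,w)>\max\{d(u,p_i),\,d(p_i,v)\}$. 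In particular $d(p_1,w)\ge k$ and $d(p_{k-1},w)\ge k$, so with the upper bound the two vertices adjacent to the endpoints are forced onto the sphere. The same meshedness-plus-metric-triangle pinning handles one more step: from a current sphere vertex $c$ with $d(c,v)=2$, meshedness applied to $w,v,c$ produces a common neighbour $x$ of $v,c$ with $2d(w,x)\le d(w,v)+d(w,c)=2k$, and since $x$ then lies internally on a $(u,v)$-geodesic with $d(u,x)=k-1$, the metric-triangle bound pins $d(w,x)=k$.

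The hard part will be the interior of the path. The metric-triangle lower bound only reaches the value $k$ within one step of the endpoints, and, decisively, meshedness is a \emph{lowering} device: applied to a pair $p_{j-1},p_{j+1}$ at distance two it returns a common neighbour no farther from $w$ than the average of $d(w,p_{j-1})$ and $d(w,p_{j+1})$, so it can never \emph{lift} a geodesic that dips below the sphere in the middle. Hence no purely local surgery on one geodesic can succeed, and a minimal-depth geodesic inside $B(w,k)$ might a priori still possess a valley; the statement is genuinely a global existence claim. I therefore expect the proof to proceed by induction on $k$: after the forced first steps onto the sphere, one reroutes each sub-geodesic joining two consecutive sphere vertices $p_{a-1},p_{b+1}$ (both at distance $k$ from $w$, at mutual distance $b-a+2<k$) by extracting, via a quasi-median of the triple $p_{a-1},p_{b+1},w$, a strictly smaller equilateral metric triangle to which the inductive hypothesis applies, and splicing the resulting higher sub-path back in. Guaranteeing that this recursion both terminates and actually raises the depth — i.e.\ that the extracted triangle is strictly smaller yet its sphere geodesic sits above the old valley — is the main obstacle, and is precisely where meshedness must be coupled with the isometricity of balls from Lemma \ref{lm:isom-balls}.
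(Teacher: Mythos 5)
Your direction ``sphere-geodesic property $\Rightarrow$ meshed'' is correct and is essentially the paper's own proof: the same quasi-median case analysis, with the hypothesis genuinely needed only in the size-$2$ case, where it supplies the middle vertex $x$ of a $(v,w)$-geodesic with $d(u',x)=2$ (your handling of the degenerate splittings is, if anything, more complete than the paper's). The genuine gap is in the direction ``meshed $\Rightarrow$ sphere-geodesic'', which you explicitly leave unfinished, and it stems from a false premise. You argue that because meshedness only returns a common neighbour no farther from $w$ than the average of its two endpoints, it ``can never lift a geodesic that dips below the sphere,'' and you conclude that ``no purely local surgery on one geodesic can succeed,'' which pushes you into an induction on $k$ via quasi-medians whose termination you admit you cannot guarantee. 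But local surgery on a single geodesic is exactly how the proof is finished; the surgery is just not aimed at \emph{lifting} the valley --- it is aimed at showing that a valley cannot exist at all, by transporting it along the path until it collides with the metric-triangle condition at the endpoint.

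Concretely, start from your own setup: an equilateral triangle $uvw$ of size $k$ (equilaterality from \cite{conn-med} is fine) and, by Lemma~\ref{lm:isom-balls}, a $(u,v)$-geodesic $u=p_0,p_1,\dots,p_k=v$ lying in $B(w,k)$, with $d(p_1,w)=k$ as you showed. Suppose some vertex is at distance $<k$ from $w$; then there is a first descent, i.e.\ a smallest index $i\ge 2$ with $d(p_i,w)=d(p_{i-1},w)-1$, and minimality forces $d(p_{i-2},w)=d(p_{i-1},w)=k$ and $d(p_i,w)=k-1$. Apply meshedness to $w$ and the pair $p_{i-2},p_i$: the resulting common neighbour $p'_{i-1}$ satisfies $2d(w,p'_{i-1})\le k+(k-1)$, hence $d(w,p'_{i-1})=k-1$ exactly, since adjacency to $p_{i-2}$ rules out $k-2$. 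Splicing $p'_{i-1}$ in place of $p_{i-1}$ keeps a geodesic inside $B(w,k)$ whose first descent is one step closer to $u$; iterating (next the pair $p_{i-3},p'_{i-1}$, the depth staying exactly $k-1$ at every step) ends with a vertex $p'_1$ adjacent to $u$, lying on a $(u,v)$-geodesic, with $d(p'_1,w)=k-1$, i.e.\ $p'_1\in I(u,v)\cap I(u,w)\setminus\{u\}$ --- contradicting that $uvw$ is a metric triangle. So the geodesic chosen inside the ball never leaves the sphere in the first place, and no induction on $k$, no extraction of smaller metric triangles, and no ``raising'' of valleys is needed. The resource your objection overlooks is precisely that the contradiction is harvested at the endpoint: the propagation never has to push any vertex up, it only has to move the depth-$(k-1)$ vertex next to $u$, where the metric-triangle condition forbids it.
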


\begin{proof}
Consider a metric triangle  $uvw$ in a meshed graph $G$. Let $P(u,v):=(u=u_0,u_1,\dots,u_i,$ $\dots, u_\ell=v)$ be a shortest path connecting $u$ and $v$ in $G$ such that all vertices of $P(u,v)$ are in $B(w,k)$ where $k=\max\{d(u, w),d(v,w)\}$   (such a shortest path exists by Lemma \ref{lm:isom-balls}). 
Let, without loss of generality, $k=d(u, w)\ge d(v,w)$.
Since $I(u,v)\cap I(u,w)=\{u\}$, vertex $u_1$ cannot be closer than $u$ to $w$. That is, $d(u_1,w)=k$. For the sake of contradiction, let $u_i$ be a vertex of $P(u,v)$ with smallest index $i$ such that $d(u_i,w)=d(u_{i-1},w)-1$. Necessarily, $i\ge 2$, $d(u_i,w)=k-1$ and $d(u_{i-1},w)=d(u_{i-2},w)=k$. By the meshedness of $G$ applied to  $u_{i-2},u_{i-1},u_i$, we can find a vertex $u'_{i-1}$ adjacent to $u_{i-2}$ and $u_i$ and at distance $k-1$ from $w$. If $i\ge 3$, we can again apply the meshedness of $G$ now to   $u'_{i-1},u_{i-2},u_{i-3}$ and get a new vertex $u'_{i-2}$ adjacent to $u_{i-3}$ and $u'_{i-1}$ and at distance $k-1$ from $w$. For $i\ge 4$, we keep applying the meshedness of $G$ until (see Fig. \ref{fig:two})  
we get a shortest path $P'(u,v):=(u=u_0,u'_1,\dots,u'_{i-1},u_i,\dots, u_\ell=v)$ between $u$ and $v$ whose vertex $u'_1$ is at distance $k-1$ from $w$. That is, $u'_1\in I(u,v)\cap I(u,w)$. The latter is impossible since $I(u,v)\cap I(u,w)=\{u\}$. Thus, such a vertex $u_i$ with $d(u_i,w)=k-1$ cannot exist, i.e., all vertices of $P(u,v)$ are at distance $k$ from $w$. 

  \begin{figure}[htb]
    \begin{center} \vspace*{-12mm}
      \begin{minipage}[b]{15cm}
        \begin{center} 
          \hspace*{25mm}
          \includegraphics[height=16cm]{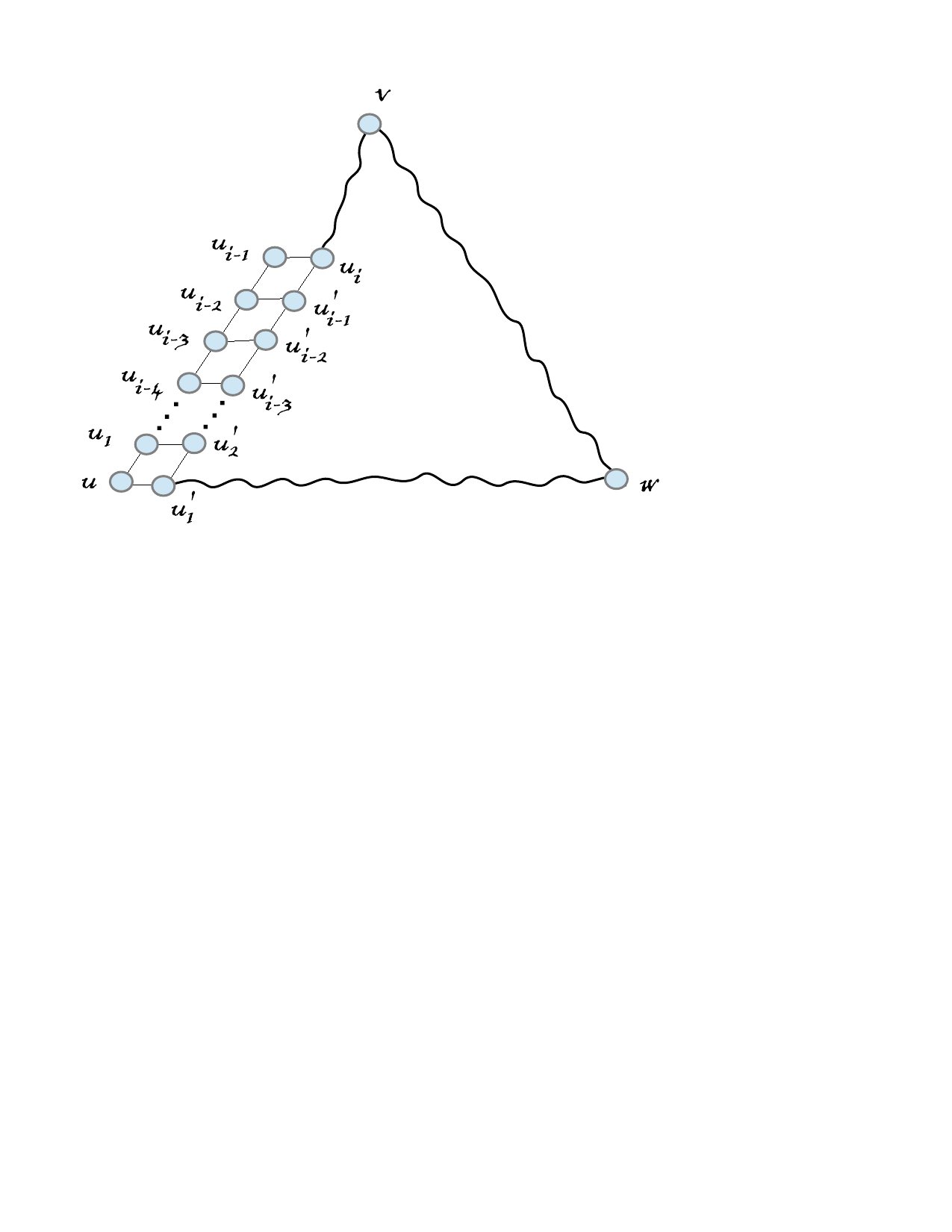}
        \end{center} \vspace*{-96mm}
        \caption{\label{fig:two} An illustration to the proof of Theorem \ref{th:metric-tr}.} %
      \end{minipage}
    \end{center}
   \vspace*{-1mm}
  \end{figure}

To prove the converse, consider arbitrary three vertices $u, v, w$ with $d(v, w) = 2$ and $d(u, v)\le d(u, w)$. We need only to consider two cases: when $d(u, v)=d(u, w)$ and when $d(u, v)=d(u, w)-1$. The reason is the following. Note that, if in $G$ for every metric triangle $uvw$ there is a shortest path $P(u,v)$ connecting $u$ and $v$ such that all vertices of $P(u,v)$ are at the same distance $k= d(u, w)$ from $w$, then in particular all metric triangles in $G$ are equilateral.  
Consider a quasi-median $u'v'w'$ of $u,v,w$. Since all metric triangles in $G$ are equilateral, we must have $d(v',u')=d(u',w')=d(v',w')=k$ where $k\in \{0,1,2\}$. This implies $d(u, v)=d(u, w)$ 
or $d(u, v)=d(u, w)-1$. 

\medskip\noindent 
{\em Case  $d(u, v)=d(u, w)$}.

\medskip\noindent 
In this case, since $d(v,w)=2$ and $d(u, v)=d(u, w)$,  necessarily, $k\in \{0,2\}$. If $k=0$ then $v'=u'=w'$ and $d(u, u')=d(u, v)-1= d(u, w)-1$, implying $2d(u, u') < d(u, v) + d(u, w).$  If $k=2$ then $v=v'$, $w=w'$ and for a metric triangle $vu'w$ there must exist a shortest path $P=(v,x,w)$ (of length two) between $v$ and $w$ such that $d(x,u')=d(v,u')=d(w,u')=2$. 
Since $d(u,u')=d(u,v)-2=d(u,w)-2$, we have $2d(u, x) \le d(u, v) + d(u, w).$

\medskip\noindent 
{\em Case  $d(u, v)=d(u, w)-1$}.

\medskip\noindent 
In this case, since $d(v,w)=2$ and $d(u, v)=d(u, w)-1$,  necessarily, $k=1$. Furthermore, we must have $v=v'$, $w'$ is adjacent to $v',w,u'$, and $u'$ is adjacent to $v'$ and at distance $d(u,v)-1$ from $u$. Consequently, $2d(u,w')\le 2d(u,v) < d(u,v)+d(u,w)$. 
   \end{proof}

Recently we learned about a new paper \cite{chepoi-new}. Our Theorem \ref{th:metric-tr} can also be deduced from its Lemma 15 and Lemma 16. 
\medskip

From Theorem \ref{th:metric-tr}  and its proof we have the following immediate corollaries. 
   
 \begin{corollary} [\cite{conn-med}] \label{cor:mtr-equil} 
The metric triangles of meshed graphs are equilateral.
\end{corollary}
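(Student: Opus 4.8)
The plan is to derive Corollary~\ref{cor:mtr-equil} directly from the ``only if'' direction of Theorem~\ref{th:metric-tr}, since that theorem is now available. Let $uvw$ be an arbitrary metric triangle in a meshed graph $G$. I want to show $d(u,v)=d(v,w)=d(w,u)$, i.e.\ that all three sides have equal length.

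First I would apply Theorem~\ref{th:metric-tr} with $w$ playing the role of the apex vertex: there is a shortest path $P(u,v)$ all of whose vertices lie at the same distance from $w$. In particular the two endpoints $u$ and $v$ of this path are equidistant from $w$, so $d(u,w)=d(v,w)$. This is the crux of the argument: the equal-distance-to-$w$ condition, applied to the endpoints, immediately equalizes two of the three side lengths.

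Next I would exploit the symmetry of the situation. The definition of a metric triangle (the intervals pairwise meeting only at common endpoints) is symmetric in $u,v,w$, so $uvw$ remains a metric triangle when we rename the vertices. Applying Theorem~\ref{th:metric-tr} a second time with $u$ as the apex yields a shortest path $P(v,w)$ whose endpoints are equidistant from $u$, giving $d(u,v)=d(u,w)$. Combining the two conclusions $d(u,w)=d(v,w)$ and $d(u,v)=d(u,w)$ forces $d(u,v)=d(v,w)=d(w,u)$, which is exactly the assertion that the metric triangle is equilateral.

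I do not anticipate a genuine obstacle here, since the corollary is an ``immediate'' consequence once Theorem~\ref{th:metric-tr} is in hand; the only point requiring a line of care is verifying that permuting the apex vertex is legitimate, which follows from the symmetric definition of a metric triangle. (One could alternatively read off equilaterality from the quasi-median computation inside the proof of Theorem~\ref{th:metric-tr}, where all three sides of the quasi-median were shown to have a common length $k$, but invoking the theorem twice by symmetry is the cleanest route.)
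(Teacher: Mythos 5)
Your proof is correct and matches the paper's own derivation: the paper states this corollary as an immediate consequence of Theorem~\ref{th:metric-tr}, and your argument (endpoints of the equidistant path give $d(u,w)=d(v,w)$, then permute the roles of $u,v,w$ using the symmetry of the metric-triangle definition) is precisely the intended reading of ``immediate.''
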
  
\begin{corollary} \label{cor:metric-tr} 
Let $G$ be a meshed graph and $uvw$ be a metric triangle of $G$ of size $k$. For every shortest path $P(u,v)$ connecting $u$ and $v$ such that $P(u,v)\subset B(w,k)$, all vertices of $P(u,v)$ are at the same distance $k$ from $w$. 
\end{corollary}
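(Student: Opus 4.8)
The plan is to observe that the contradiction argument already carried out for the ``only if'' direction of Theorem~\ref{th:metric-tr} proves this stronger, path-universal statement with essentially no extra work; the corollary merely records that the particular shortest path chosen there played no special role. So I would phrase the proof as ``re-run the Theorem's argument on an arbitrary admissible path''.

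First I would fix notation. By Corollary~\ref{cor:mtr-equil} the metric triangle $uvw$ is equilateral, so $d(u,v)=d(v,w)=d(w,u)=k$; in particular any shortest path $P(u,v)=(u=u_0,u_1,\dots,u_\ell=v)$ has $\ell=k$, and if $P(u,v)\subset B(w,k)$ then $d(u_j,w)\le k$ for every $j$. The goal is then to upgrade ``$\le k$'' to ``$=k$'' for all $j$, for \emph{every} such $P(u,v)$, not only for the one furnished by Lemma~\ref{lm:isom-balls}.

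Next I would replay the argument of Theorem~\ref{th:metric-tr} on this arbitrary $P(u,v)$. The metric-triangle condition gives $I(u,v)\cap I(u,w)=\{u\}$, so the neighbor $u_1$ of $u$ cannot be strictly closer to $w$ than $u$, whence $d(u_1,w)=k$. Suppose for contradiction that some vertex of $P(u,v)$ is strictly closer to $w$, and let $i$ be the least index with $d(u_i,w)=d(u_{i-1},w)-1$; since the distance-to-$w$ sequence starts at $k$, never exceeds $k$, and has no earlier descent, we get $d(u_j,w)=k$ for all $j<i$ and $d(u_i,w)=k-1$, with $i\ge 2$. Applying meshedness with apex $w$ to the distance-two pair $u_{i-2},u_i$ yields a common neighbor $u'_{i-1}$ with $2d(w,u'_{i-1})\le d(w,u_{i-2})+d(w,u_i)=2k-1$, hence $d(w,u'_{i-1})=k-1$ (the lower bound $k-1$ coming from adjacency to $u_{i-2}$); replacing $u_{i-1}$ by $u'_{i-1}$ keeps a shortest $u$--$v$ path. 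Iterating this replacement backward on the pairs $u_{i-3},u'_{i-1}$, then $u_{i-4},u'_{i-2}$, and so on, I would peel the path down to a neighbor $u'_1$ of $u$ lying on a shortest $u$--$v$ path with $d(w,u'_1)=k-1=d(u,w)-1$. Then $u'_1\in I(u,v)\cap I(u,w)\setminus\{u\}$, contradicting the metric-triangle condition. Hence no descent occurs; being bounded by $k$ and starting at $k$, the sequence is constant equal to $k$, which is exactly the claim.

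The only points needing care---and they are the very same as in Theorem~\ref{th:metric-tr}---are that each meshedness step (i)~keeps the modified walk a \emph{shortest} $u$--$v$ path, which holds because we swap one vertex for a neighbor common to its two path-neighbors and thus preserve the length $k$, and (ii)~produces a vertex at distance exactly $k-1$ from $w$, the upper bound $k-\tfrac12$ coming from meshedness and the matching lower bound $k-1$ from adjacency to a distance-$k$ vertex. I do not expect any genuine obstacle: the real content of the corollary is just the observation that the Theorem's proof is oblivious to \emph{which} admissible shortest path it is applied to, so its conclusion holds uniformly over all of them.
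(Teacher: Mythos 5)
Your proposal is correct and matches the paper's own treatment: the paper derives this corollary by simply noting that the proof of Theorem~\ref{th:metric-tr} uses nothing about the chosen path except its containment in $B(w,k)$, which is exactly the observation you make and then carry out in detail. Your spelled-out verifications (that each meshedness swap stays at distance exactly $k-1$ from $w$ and preserves shortestness) are the same steps implicit in the theorem's proof, so there is no substantive difference.
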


Next, we consider meshed graphs satisfying  a ($\lambda,\mu$)-bow metric for some $\lambda\ge 0$ and $\mu\ge 0$. 

A subset $S$ of a geodesic metric space or a graph is \emph{convex} if for all $x,y \in S$ the metric interval $I(x,y)$ is contained in $S$. This notion was extended by Gromov~\cite{Gromov1987} as follows:
for $\epsilon \geq 0$, a subset $S$ of a geodesic metric space or a graph is called \emph{$\epsilon$-quasiconvex} 
if for all $x,y \in S$ the metric interval $I(x,y)$ is contained in the ball $B(S,\epsilon)$. $S$ is said to be \emph{quasiconvex} if there is a constant $\epsilon \geq 0$ such that $S$ is $\epsilon$-quasiconvex.
Quasiconvexity plays an important role in the study of hyperbolic and cubical groups,
and hyperbolic graphs contain an abundance of quasiconvex sets~\cite{Chepoi:2017:CCI:3039686.3039835}.

\begin{lemma} \label{lm:balls-conv} 
Let $G$ be a meshed graph satisfying ($\lambda,\mu$)-bow metric. Then, all balls of $G$ are $k$-quasiconvex for some $k\le \max\{\lambda,\mu/2\}$.
\end{lemma}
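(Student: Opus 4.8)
The plan is to show that for any ball $B(w,k)$ and any two vertices $x,y \in B(w,k)$, every vertex $z$ on a shortest path $P(x,y)$ lies within distance $\max\{\lambda,\mu/2\}$ of the ball. First I would fix such $x,y,z$ and consider the vertex $w'$ on a shortest $(z,w)$-path that realizes $d(z,w)$; the goal reduces to bounding $d(z,w)-k$ by $\max\{\lambda,\mu/2\}$. The natural tool is to construct a metric triangle and exploit Corollary~\ref{cor:metric-tr}, which tells us that for a metric triangle $uvw$ of a meshed graph, any shortest $(u,v)$-path contained in $B(w,k)$ is ``flat'', i.e.\ all its vertices are at the same distance $k$ from $w$. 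The idea is to take a quasi-median $x'y'w''$ of the triple $x,y,w$ (which exists and is an equilateral metric triangle by Corollary~\ref{cor:mtr-equil}), decomposing the shortest $(x,y)$-path so that $z$ falls on the ``plateau'' portion between $x'$ and $y'$.

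Next I would split into the two regimes governed by the bow-metric threshold. If the overlap lengths along the decomposition are small (at most $\lambda$), then $z$ is automatically close to $x$ or $y$, hence close to the ball center within $\lambda$, giving the $\lambda$ term in the bound. If instead the relevant subpaths are long (more than $\lambda$), I would invoke the $(\lambda,\mu)$-bow metric directly: writing $z$ as an endpoint of two overlapping shortest paths that share a long common subpath through $z$, the bow-metric inequality forces $d(x,y) \ge d(x,z_1)+d(z_1,z_2)+d(z_2,y)-\mu$ for appropriately chosen anchor points. Comparing this with the triangle-inequality upper bound on $d(x,y)$ yields a $\mu$-defect, which when distributed symmetrically to the two ends of the long subpath produces the $\mu/2$ contribution. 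The combination of the two cases gives the $\max\{\lambda,\mu/2\}$ bound.

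The hard part will be setting up the geometry so that the bow-metric hypothesis actually applies: the bow metric requires two shortest paths $P(u,w)$ and $P(v,x)$ overlapping in a common subpath of length exceeding $\lambda$, so I must carefully identify which four vertices play the roles of $u,v,w,x$ and verify the interval memberships $v\in I(u,w)$ and $w\in I(v,x)$. The meshed structure, via the flatness from Corollary~\ref{cor:metric-tr}, is what guarantees that the intermediate vertices genuinely lie on geodesics at the right distances from $w$, so that the excess distance $d(z,w)-k$ can be read off as the overlap defect. I would expect the estimate to hinge on choosing $z$ to be the vertex on $P(x,y)$ maximizing $d(z,w)$ and then symmetrizing the defect bound $d(z,w)-k$ between the two subpaths $P(x,z)$ and $P(z,y)$, which is where the factor of one half on $\mu$ enters.
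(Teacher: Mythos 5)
There is a genuine gap, and it sits exactly at the point you yourself flag as ``the hard part.'' Your central device --- taking a quasi-median $x'y'w''$ of $x,y,w$ and arranging matters ``so that $z$ falls on the plateau portion between $x'$ and $y'$'' --- cannot work. The plateau geodesic supplied by Theorem~\ref{th:metric-tr}/Corollary~\ref{cor:metric-tr} has all its vertices at distance $s$ from $w''$ (where $s$ is the size of the equilateral triangle $x'y'w''$), hence at distance at most $d(w,w'')+s=d(w,x')\le d(w,x)\le r$ from the center $w$ (writing $r$ for the ball radius). Concatenating $x\to x'\to\text{plateau}\to y'\to y$ therefore just reproduces one geodesic that stays \emph{inside} $B(w,r)$ --- essentially Lemma~\ref{lm:isom-balls}. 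But quasiconvexity is a statement about \emph{every} vertex of $I(x,y)$, and the vertex $z$ you need to control is precisely one with $d(z,w)>r$; such a $z$ can never lie on the plateau, and an arbitrary geodesic through $z$ need not pass through $x'$ or $y'$ at all (quasi-medians are neither unique nor attached to a prescribed geodesic). So your decomposition controls the wrong geodesic, and neither of your two cases ever engages the far vertex; in particular the claim that ``small overlaps'' force $z$ to be within $\lambda$ of $x$ or $y$ has no support in your setup.

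The missing idea is a different construction. The paper picks $z$ (there called $y$) to be a vertex of $I(u,v)$ farthest from $w$, with excess $k=d(z,w)-r$, and then among all geodesics from $v$ to $z$ takes one, $P(v,z)$, minimizing $\sigma=\sum_{a\in P(v,z)}d(a,w)$. Meshedness applied along this minimal-sum geodesic shows that after its last exit from the ball, at an edge $v'y'$ with $d(w,v')=r$, the path is radially monotone: every step increases the distance to $w$ by one. This monotonicity is what buys the two interval memberships needed for the bow metric: $v'\in I(w,z)$ (the ascent has length exactly $k$, so $d(w,z)=d(w,v')+d(v',z)$) and $z\in I(v',u)$ (since $v'$ lies on a geodesic from $v$ to $z$ and $z\in I(u,v)$). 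Then, if $k>\lambda$, one application of the $(\lambda,\mu)$-bow metric to the quadruple $(w,v',z,u)$ gives $r\ge d(w,u)\ge d(w,v')+d(v',z)+d(z,u)-\mu\ge r+k+k-\mu$, using $d(z,u)\ge d(z,w)-d(w,u)\ge k$; hence $k\le\mu/2$, and $k\le\max\{\lambda,\mu/2\}$ overall. Note also that the factor $\frac12$ on $\mu$ does not come from ``symmetrizing the defect between the two subpaths'' as you propose, but from the fact that the ascent length $d(v',z)=k$ and the triangle-inequality bound $d(z,u)\ge k$ both appear in a single bow inequality. Your instinct to apply the bow metric at the peak is correct, but without the monotone-ascent construction the hypothesis $v'\in I(w,z)$ is simply unavailable, and the proof does not go through.
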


\begin{proof} Consider in a meshed graph $G$ a ball $B(w,r)$ and two arbitrary vertices $u,v\in B(w,r)$. Consider a vertex $y$ of $I(u,v)$ such that $d(y,w)=\max\{d(x,w): x\in I(u,v)\}$ (i.e., $y$ is a  vertex of  $I(u,v)$ most distant from $w$). Let $k:=d(y,w)-r>0$. We want to show that $k\le \max\{\lambda,\mu/2\}$. Consider a shortest path $P(v,y)$ between $v$ and $y$ whose sum $\sigma:=\sum_{x\in P(v,y)} d(x,w)$ is smallest. Let $v'y'$ be the first edge of $P(v,y)$ (when moving from $v$ to $y$) such that $d(w,v')=r=d(w,y')-1$ (i.e., the first edge leaving the ball $B(w,r)$). We claim that the subpath $P':=(v'=x_0,y'=x_1,\dots, x_i,\dots, x_{\ell}=y)$ of $P(v,y)$ is monotone in the sense that $d(w,x_i)<d(w,x_{i+1})$ for each $i$, $0\le i<\ell$ (in other words, we claim that $\ell=k$). Assume this is not the case and consider the first edge $x_ix_{i+1}$ of $P'$ (when moving from $v'$ to $y$ along $P'$) such that $d(x_i,w)\ge d(x_{i+1},w)$.  By the meshedness of $G$ applied to vertices $x_{i-1},x_i,x_{i+1}$ and $w$, we will find a new vertex $z$ which is adjacent to $x_{i-1},x_{i+1}$ and at distance at most $d(w,x_{i-1})$ from $w$.  The latter contradicts our assumption that  $P(v,y)$ had smallest sum $\sigma$; replacing $x_i$ with $z$ in $P(v,y)$  will produce a shortest path between $v$ and $y$ with a smaller sum. Thus, $\ell=k$ must hold.  

Now, if $d(v',y)=\ell=k$ is greater than $\lambda$, we can apply ($\lambda,\mu$)-bow metric to $y\in I(v',u)$, $v'\in I(w,y)$ and get $r\ge d(w,u)\ge d(w,v')+d(v',u)-\mu= r+d(v',y)+d(y,u)-\mu\ge r+2k-\mu$ (as $d(v',y)=k\le d(y,u)$). That is, if $k> \lambda$, then $k\le \mu/2$. Consequently, $k\le  \max\{\lambda,\mu/2\}$.
\end{proof}

\begin{lemma} \label{lm:metric-tr-size} 
Let $G$ be a meshed graph satisfying ($\lambda,\mu$)-bow metric. Then, all metric triangles of $G$ are equilateral of size at most $\lambda+2\mu+1$. 
\end{lemma}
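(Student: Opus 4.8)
The plan is to reduce the statement to a single quantitative bound on the common side length of a metric triangle, and to obtain that bound by re-using, in a more global form, the monotone-descent technique already developed in the proof of Lemma \ref{lm:balls-conv}.

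First, the word \emph{equilateral} is free: by Corollary \ref{cor:mtr-equil} every metric triangle of a meshed graph is equilateral, so it suffices to fix an equilateral metric triangle $uvw$ of size $k$ and prove $k \le \lambda + 2\mu + 1$. I would argue by contradiction, assuming $k$ is strictly larger. By Theorem \ref{th:metric-tr} and Corollary \ref{cor:metric-tr}, each side of the triangle is carried by a geodesic equidistant from the opposite corner; in particular there is a shortest $(u,v)$-path all of whose vertices lie at distance exactly $k$ from $w$, and symmetrically for the other two sides. This equidistant structure is what I would exploit to locate the configuration needed by the bow metric.

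The key point is that the $(\lambda,\mu)$-bow metric can only bite on a \emph{straight} overlap, i.e.\ on four vertices $U,V,W,X$ with $V\in I(U,W)$ and $W\in I(V,X)$, the two geodesics $P(U,W)$ and $P(V,X)$ sharing the middle segment $[V,W]$ of length more than $\lambda$. I would manufacture such a configuration exactly as in Lemma \ref{lm:balls-conv}: take a vertex $y$ of $I(u,v)$ maximizing $d(y,w)$, choose a shortest $(v,y)$-path of minimum distance-sum to $w$, and use the meshed condition to straighten it, so that its final portion $[v',y]$ is strictly $w$-monotone. Then $v'\in I(w,y)$, while $v'\in I(v,y)\subseteq I(u,v)$ forces $y\in I(v',u)$, giving the straight overlap $[v',y]$ with $U=w$, $V=v'$, $W=y$, $X=u$. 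When this overlap exceeds $\lambda$, bow metric yields $k=d(w,u)\ge d(w,y)+d(y,u)-\mu\ge k+d(u,y)-\mu$, hence $d(u,y)\le\mu$; the symmetric descent from the $u$-side gives $d(v,y)\le\mu$, and since $y\in I(u,v)$ we conclude $k=d(u,y)+d(v,y)\le 2\mu$.

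The main obstacle is precisely that the monotone ascent toward $y$ may be short: if $I(u,v)$ does not bulge strictly beyond distance $k$ from $w$ (for instance, along the equidistant path itself the profile is flat), the overlap $[v',y]$ can have length at most $\lambda$ and the bow-metric premise fails. Handling this residual, plateau case is the heart of the argument and is where the additive slack enters: one must either descend further toward $w$ along a genuinely monotone geodesic while preserving the betweenness $y\in I(v',u)$, or run the descent from both sides simultaneously and combine the two deficiencies, each short ascent costing at most $\lambda$ and the parity of the midpoint split costing the final $+1$. Once a straight overlap of length more than $\lambda$ is secured in every case, the bow-metric inequality together with the equilaterality of $uvw$ combine to give $k\le\lambda+2\mu+1$.
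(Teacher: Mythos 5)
Your setup matches the paper's: equilaterality is free by Corollary \ref{cor:mtr-equil}, and Theorem \ref{th:metric-tr} supplies $w$-equidistant geodesic sides. Your favorable case is also sound: if some $y\in I(u,v)$ has $d(w,y)>k+\lambda$, the monotone-path technique of Lemma \ref{lm:balls-conv} produces $v'$ with $v'\in I(w,y)$, $y\in I(v',u)$ and $d(v',y)>\lambda$, and two bow-metric applications give $k=d(u,y)+d(y,v)\le 2\mu$. But the proof stops exactly where the real work begins. Nothing forces $I(u,v)$ to bulge beyond $B(w,k)$ at all: for instance, in the triangular grid (bridged, hence meshed), the interval between two corners of a large equilateral lattice triangle is the straight side itself, lying entirely on the sphere of radius $k$ around the third corner, so the bulge is zero and your construction never produces an overlap longer than $\lambda$. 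In that plateau case your argument yields nothing, and you openly defer it ("the heart of the argument") with two unexecuted suggestions. Neither works as stated: descending from $y$ below the sphere of radius $k$ toward $w$ destroys the betweenness $y\in I(v',u)$, which in your favorable case came precisely from $v'\in I(v,y)$ with $y\in I(u,v)$ (a vertex on a geodesic from $y$ toward $w$ need not lie in $I(v,y)$, and then $d(v',u)<d(v',y)+d(y,u)$ in general); and running short ascents from both sides cannot be combined, because the bow-metric premise requires a single geodesic overlap of length more than $\lambda$, not a sum of two deficits on different geodesics.

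The paper closes this case with a construction your sketch does not contain, and the overlaps it uses point radially into the triangle rather than outward from $B(w,k)$. Assuming $k\ge\lambda+\mu+1$, pick $x\in P(w,u)$, $y\in P(w,v)$ on the equidistant sides with $d(w,x)=d(w,y)=\lambda+\mu+1$, and take a quasi-median $x'y'w'$ of $x,y,w$; the metric-triangle condition forces $w'=w$, and $x'y'w$ is equilateral of some size $\ell$ with $d(v,x')=d(u,y')=k$. If $\ell\le\lambda$, the bow metric applied to $(v,y,x',x)$ — the overlap $[y,x']$ has length $d(y,x')=d(y,w)=\lambda+\mu+1>\lambda$ and lies on a geodesic from $v$ through $y,x'$ to $x$ — gives $k=d(v,x)\ge d(v,x')+d(x',x)-\mu=k+\lambda+1-\ell$, a contradiction; hence $\ell\ge\lambda+1$. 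Then the bow metric applied to $(v,y',x',u)$, with overlap $[y',x']$ of length $\ell>\lambda$, gives $k=d(v,u)\ge d(v,x')+d(x',u)-\mu=2k-\ell-\mu$, so $k\le\ell+\mu\le d(w,x)+\mu=\lambda+2\mu+1$. This two-case analysis on the size of the inner quasi-median triangle is the missing idea; without it, the bound is not established.
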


\begin{proof}
It is known \cite{meshed} that all metric triangles of a meshed graph are equilateral (see also Corollary \ref{cor:mtr-equil}).  Consider an equilateral metric triangle $uvw$ of size $k:=d(u,v)$ and shortest paths $P(w,u)$ and $P(w,v)$ such that $d(u,y)=k$, for all $y\in P(w,v)$, and $d(v,x)=k$, for all $x\in P(w,u)$ (such paths exist by Theorem \ref{th:metric-tr}). Assuming $k\ge \lambda+\mu+1$, pick vertices $x\in P(w,u)$ and $y\in P(w,v)$ such that $d(w,x)=d(w,y)=\lambda+\mu+1$. We know $d(v,x)=d(u,y)=k$. 
Let $x'y'w'$ be a quasi-median of $x,y$ and $w$. Let $\ell:=d(w',x')=d(x',y')=d(x',w')$ (recall that $x'y'w'$ is an equilateral metric triangle). Since  $uvw$ is a metric triangle, necessarily, $w=w'$. Furthermore, by Lemma \ref{lm:isom-balls}  and Corollary \ref{cor:metric-tr},  $d(x',v)=d(y',u)=k$. See Fig. \ref{fig:three} 
for an illustration. 

  \begin{figure}[htb]
    \begin{center} \vspace*{-12mm}
      \begin{minipage}[b]{15cm}
        \begin{center} 
          \hspace*{27mm}
          \includegraphics[height=16cm]{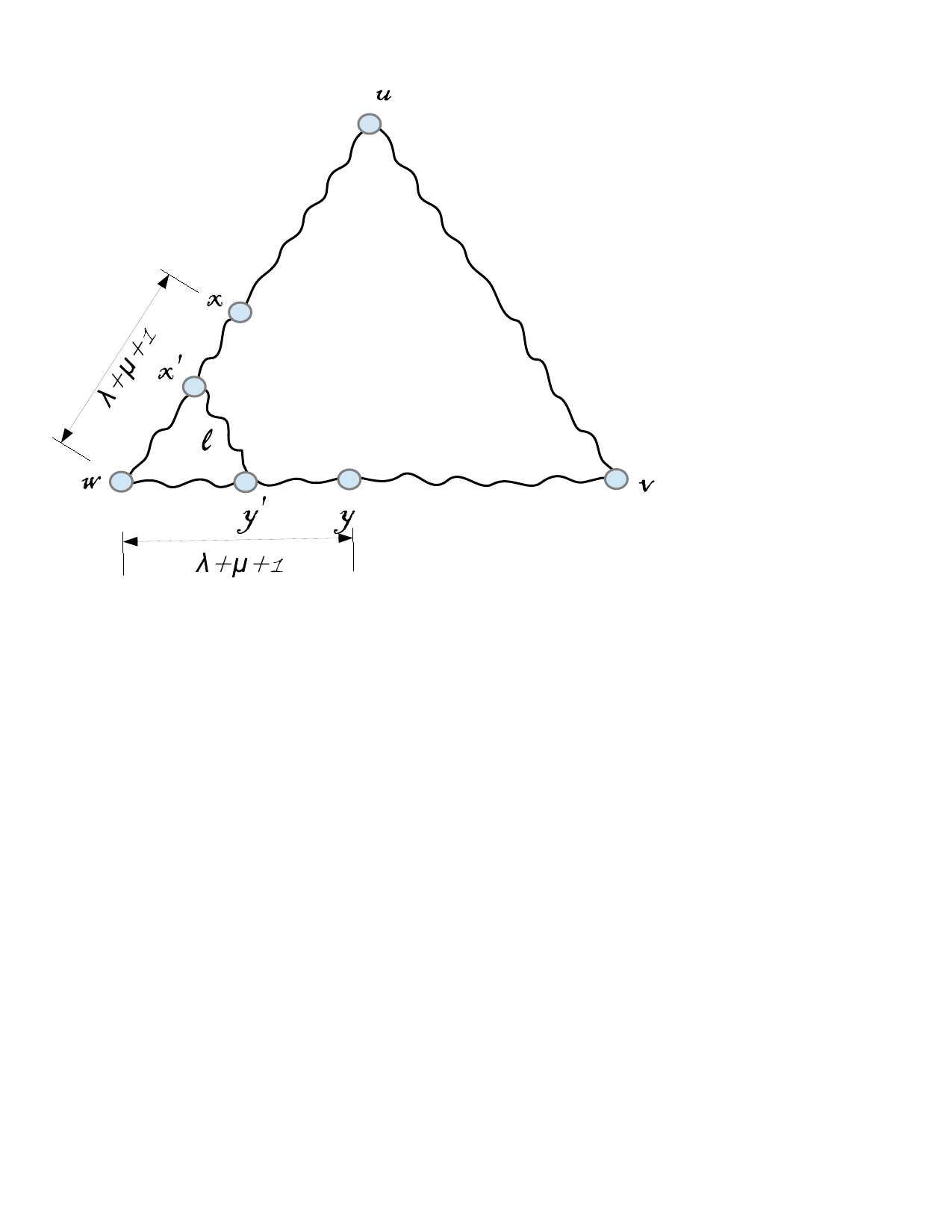}
        \end{center} \vspace*{-86mm}
        \caption{\label{fig:three} An illustration to the proof of Lemma \ref{lm:metric-tr-size}.} %
      \end{minipage}
    \end{center}
   \vspace*{-1mm}
  \end{figure}

First, assume $\ell\ge \lambda+1$. We have $y'\in I(v,x')$ and $x'\in I(u,y')$ since $k=d(v,x')\le d(v,y')+d(y',x')=d(v,y')+\ell=d(v,y')+d(y',w)=d(v,w)=k$ and, similarly, $k=d(u,y')=d(u,x')+d(x',y')$. Applying $(\lambda,\mu)$-bow metric, we get $k=d(v,u)\ge d(v,x')+d(x',u)-\mu =k+k-\ell-\mu$, i.e, $k\le \ell+\mu= d(w,x')+\mu\le d(w,x)+\mu=\lambda+\mu+1+\mu=\lambda+2\mu+1$. 

Assume now $\ell< \lambda+1$. We have  $x'\in I(x,y)$ by the definition of a quasi-median. Additionally, $y\in I(v,x')$ and $d(y,x')=d(y,w)=\lambda+\mu+1$ hold since $k=d(v,x')\le d(v,y)+d(y,x')=d(v,y)+d(y,y')+d(y',x')=d(v,y)+d(y,y')+\ell=d(v,y)+d(y,y')+d(y',w)=d(v,y)+d(y,w)=d(v,w)=k$ and the first inequality must indeed be an equality. 
Applying $(\lambda,\mu)$-bow metric, we get $k=d(v,x)\ge d(v,x')+d(x',x)-\mu=d(v,x')+d(w,x)-\ell-\mu =k+\lambda+\mu+1-\ell-\mu$. That is, $\ell\ge \lambda+1$, and a contradiction with $\ell< \lambda+1$ arises.   
\end{proof}

By Proposition  \ref{prop:bow-int-thin} and Lemma  \ref{lm:metric-tr-size}, 
the intervals of $G$ are $(3\lambda+2\mu+1)$-thin. 

\begin{corollary} \label{cor:leanness} 
Let $G$ be a meshed graph satisfying ($\lambda,\mu$)-bow metric. Then, the intervals of $G$ are $(3\lambda+2\mu+1)$-thin. 
\end{corollary}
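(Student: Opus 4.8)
The plan is to combine the two structural facts already established for meshed graphs satisfying $(\lambda,\mu)$-bow metric with the general interval-thinness bound of Proposition~\ref{prop:bow-int-thin}. By Lemma~\ref{lm:metric-tr-size}, all metric triangles of $G$ are equilateral of size at most $q:=\lambda+2\mu+1$. This gives us a concrete bound $q$ on the side lengths of metric triangles in $G$, which is exactly the hypothesis needed to invoke Proposition~\ref{prop:bow-int-thin}.

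First I would apply Proposition~\ref{prop:bow-int-thin} with $q=\lambda+2\mu+1$. That proposition states that a $(\lambda,\mu)$-bow metric graph whose metric triangles have sides of length at most $q$ has $p$-thin intervals with $p\le\max\{\mu,\,q+2\lambda\}$. Substituting $q=\lambda+2\mu+1$ yields the two candidate values $\mu$ and $(\lambda+2\mu+1)+2\lambda=3\lambda+2\mu+1$. Since $\lambda\ge 0$ and $\mu\ge 0$, we have $3\lambda+2\mu+1\ge\mu$, so the maximum is $3\lambda+2\mu+1$. Hence $p\le 3\lambda+2\mu+1$, which is precisely the claimed bound.

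The proof is therefore essentially a one-line consequence of two previously established results, and there is no genuine obstacle to overcome: the whole content has already been extracted into Lemma~\ref{lm:metric-tr-size} and Proposition~\ref{prop:bow-int-thin}. The only point requiring a moment of care is confirming that the $\max$ in Proposition~\ref{prop:bow-int-thin} resolves to the second term; this is immediate from nonnegativity of $\lambda,\mu$, but it is worth stating explicitly so the reader sees why the bound is $3\lambda+2\mu+1$ rather than $\mu$. In short, the corollary records the composition $p\le\max\{\mu,q+2\lambda\}=\max\{\mu,3\lambda+2\mu+1\}=3\lambda+2\mu+1$, and I would present it with a single sentence of justification rather than any new argument.
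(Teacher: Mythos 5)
Your proposal is correct and follows exactly the paper's own route: the paper derives this corollary by combining Lemma~\ref{lm:metric-tr-size} (metric triangles of $G$ are equilateral of size at most $q=\lambda+2\mu+1$) with Proposition~\ref{prop:bow-int-thin}, obtaining $p\le\max\{\mu,q+2\lambda\}=3\lambda+2\mu+1$. Your extra sentence noting why the maximum resolves to the second term is a harmless (and reasonable) explicit check that the paper leaves implicit.
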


By  Lemma  \ref{lm:metric-tr-size} and Corollary \ref{cor:leanness}, we can apply Corollary \ref{cor:delta-hyp-2nd-equidist} and Proposition~\ref{prop:delta-hyp-1st} with $q=\lambda+2\mu+1$ and $p=3\lambda+2\mu+1$ to obtain that meshed graphs satisfying a ($\lambda,\mu$)-bow metric are hyperbolic. Using also Proposition~\ref{prop:sl-vs-hb}, we can state the following.

 \begin{theorem} \label{th:meshed-hyperb} 
For meshed graphs satisfying ($\lambda,\mu$)-bow metric, the following inequalities hold for their hyperbolicity $\delta$ and their slimness $\varsigma$:
\begin{align*}
   \delta\le & ~2(\lambda+\mu)+1, \\
\varsigma\le\ &
  \left\{\begin{array}{l}
                 \frac{13}{2}\lambda+5\mu+\frac{5}{2},   \mbox{~~~~~~~ when } \lambda\le 2(\mu+1)\\
                6(\lambda+\mu)+\frac{7}{2},  \mbox{~~~~~ otherwise. }
    \end{array}\right.
     \end{align*}
\end{theorem}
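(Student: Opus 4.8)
The plan is to assemble the bound directly from the three structural results already established for meshed graphs satisfying $(\lambda,\mu)$-bow metric, namely the size bound on metric triangles (Lemma~\ref{lm:metric-tr-size}), the interval thinness bound (Corollary~\ref{cor:leanness}), and the two hyperbolicity-from-thinness propositions (Corollary~\ref{cor:delta-hyp-2nd-equidist} and Proposition~\ref{prop:delta-hyp-1st}). First I would fix the two parameters that feed into those results: set $q:=\lambda+2\mu+1$ for the size of the (equilateral) metric triangles and $p:=3\lambda+2\mu+1$ for the interval thinness. Both are supplied verbatim by the cited lemmas, so no new geometric argument is needed here; the remaining work is purely to substitute and take minima.

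For the hyperbolicity bound I would invoke Corollary~\ref{cor:delta-hyp-2nd-equidist}, which applies precisely because meshed-graph metric triangles are \emph{equilateral}. It yields $\delta\le\frac{p+q}{2}$. Plugging in $p=3\lambda+2\mu+1$ and $q=\lambda+2\mu+1$ gives $\delta\le\frac{(3\lambda+2\mu+1)+(\lambda+2\mu+1)}{2}=\frac{4\lambda+4\mu+2}{2}=2(\lambda+\mu)+1$, which is exactly the claimed bound on $\delta$. Here I expect no obstacle: the equilateral hypothesis of Corollary~\ref{cor:delta-hyp-2nd-equidist} is met by Corollary~\ref{cor:mtr-equil}, and the arithmetic is immediate.

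For the slimness bound the subtlety is that two different routes are available, and the stated piecewise formula reflects taking whichever is smaller. The first route feeds the hyperbolicity bound $\delta\le 2(\lambda+\mu)+1$ into Proposition~\ref{prop:sl-vs-hb}, giving $\varsigma\le 3\delta+\frac12=3(2(\lambda+\mu)+1)+\frac12=6(\lambda+\mu)+\frac72$. The second route uses the slimness half of Corollary~\ref{cor:delta-hyp-2nd-equidist}, namely $\varsigma\le\frac{3(p+q)+1}{2}$; but a sharper estimate comes from Proposition~\ref{prop:delta-hyp-1st}, which gives slimness at most $2p+\frac{q}{2}=2(3\lambda+2\mu+1)+\frac{\lambda+2\mu+1}{2}=6\lambda+4\mu+2+\frac{\lambda}{2}+\mu+\frac12=\frac{13}{2}\lambda+5\mu+\frac52$. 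Taking the minimum of the two expressions $6(\lambda+\mu)+\frac72$ and $\frac{13}{2}\lambda+5\mu+\frac52$ produces the stated case split: comparing them reduces to comparing $6\lambda+6\mu+\frac72$ with $\frac{13}{2}\lambda+5\mu+\frac52$, i.e.\ whether $\mu+1\ge\frac{\lambda}{2}$, equivalently $\lambda\le 2(\mu+1)$, in which case the $\frac{13}{2}\lambda+5\mu+\frac52$ bound wins, and otherwise the $6(\lambda+\mu)+\frac72$ bound.

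The main obstacle, such as it is, is purely bookkeeping rather than conceptual: one must apply Proposition~\ref{prop:delta-hyp-1st} and Corollary~\ref{cor:delta-hyp-2nd-equidist} with the correct $(p,q)$ pairing and then carry out the threshold comparison $\lambda\lessgtr 2(\mu+1)$ carefully to verify which of the two linear forms dominates in each regime. I would double-check the boundary case $\lambda=2(\mu+1)$ to confirm the two expressions agree there (they do, both equalling $\tfrac{13}{2}\lambda+5\mu+\tfrac52$ with $\lambda=2\mu+2$), ensuring the piecewise definition is consistent. With the two thinness/triangle-size inputs in hand, everything else is substitution and a single inequality comparison, so I would present it compactly and let Proposition~\ref{prop:sl-vs-hb} and the two hyperbolicity propositions do the heavy lifting.
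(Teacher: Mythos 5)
Your proposal is correct and follows essentially the same route as the paper: it instantiates Corollary~\ref{cor:delta-hyp-2nd-equidist} and Proposition~\ref{prop:delta-hyp-1st} with $q=\lambda+2\mu+1$ (from Lemma~\ref{lm:metric-tr-size}) and $p=3\lambda+2\mu+1$ (from Corollary~\ref{cor:leanness}), then uses Proposition~\ref{prop:sl-vs-hb} for the second slimness branch. The arithmetic, the minimum over the two slimness bounds, and the threshold $\lambda\le 2(\mu+1)$ all match the paper's derivation.
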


As weakly modular graphs are meshed, we get the following immediate corollary. 

\begin{corollary} \label{cor:wmg-hyperb} 
For weakly modular graphs satisfying a ($\lambda,\mu$)-bow metric,  the following inequalities hold for their hyperbolicity $\delta$ and their slimness $\varsigma$:
\begin{align*}
   \delta\le & ~2(\lambda+\mu)+1, \\
\varsigma\le\ &
  \left\{\begin{array}{l}
    \frac{13}{2}\lambda+5\mu+\frac{5}{2},   \mbox{~~~~~~~ when } \lambda\le 2(\mu+1)\\
                6(\lambda+\mu)+\frac{7}{2},  \mbox{~~~~~ otherwise. }
    \end{array}\right.
     \end{align*}
%
\end{corollary}

In the next subsections, we will improve bounds on $\delta$ and $\varsigma$ for various subclasses of weakly modular graphs and their relatives.

\subsection{CB-graphs}

As we mentioned earlier, all metric triangles in weakly modular graphs are strongly equilateral.  

\begin{lemma} [\cite{Ch1989-m-tr}]\label{lem:wmg-metr-tr}  A graph $G$ is weakly
modular if and only if all metric triangles of $G$  are strongly equilateral. 
\end{lemma}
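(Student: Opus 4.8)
The plan is to prove the two implications separately, reducing each to small configurations on which the triangle and quadrangle conditions (or the hypothesis of strong equilaterality) can be read off directly. I first record the trivial observation that a strongly equilateral metric triangle is equilateral: applying the definition to the endpoints $u,v\in I(u,v)$ gives $d(u,w)=d(v,w)$, and symmetry over the three sides yields $d(u,v)=d(v,w)=d(u,w)$. Thus it is only the equidistance statement that carries content beyond equilaterality.

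For the forward implication, suppose $G$ is weakly modular and let $uvw$ be a metric triangle. Since weakly modular graphs are meshed, Corollary \ref{cor:mtr-equil} already gives that $uvw$ is equilateral, of some common side length $k$, and Theorem \ref{th:metric-tr} even exhibits one shortest $(u,v)$-path all of whose vertices lie at distance $k$ from $w$; what remains is to promote this from one path to \emph{every} vertex of $I(u,v)$. I would establish this by induction on $k$, arguing by contradiction. Assuming some $p\in I(u,v)$ has $d(p,w)\neq k$, I would use the triangle condition (to treat steps where consecutive vertices are at distance one from the relevant vertex) and the quadrangle condition (to treat the distance-two configurations that arise along a shortest $(u,v)$-path) to reroute the path and descend toward $u$ or toward $v$, ultimately producing a vertex lying in $I(u,v)\cap I(u,w)$ or in $I(u,v)\cap I(v,w)$ but different from the shared endpoint. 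This contradicts the defining property of the metric triangle $uvw$ and forces $d(\cdot,w)\equiv k$ on $I(u,v)$.

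For the converse, assume every metric triangle is strongly equilateral and verify the two defining conditions by passing to a quasi-median, which is a metric triangle and hence equilateral of some size $s$. Given $u,v,w$ as in the triangle condition, with $d(v,w)=1<d(u,v)=d(u,w)=m$, expanding $d(u,v)=d(u,w)$ and $d(v,w)=1$ through the quasi-median decomposition forces $s=1$ (the degenerate case $s=0$ would make a median coincide with $v$ or $w$ and contradict $d(u,v)=d(u,w)$); then $v'=v$, $w'=w$, the vertex $u'$ is adjacent to both $v$ and $w$, and $d(u,u')=m-1$, so $x:=u'$ is the desired common neighbor. For the quadrangle condition, with $d(v,w)=2$ and $d(u,v)=d(u,w)=m=d(u,z)-1$, the same computation gives $s\in\{0,2\}$. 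The size-two case is impossible: it would yield $v'=v$, $w'=w$ and an equilateral metric triangle $u'vw$ with $d(u,u')=m-2$, whence by strong equilaterality the common neighbor $z$ of $v,w$ lies on $I(v,w)$ at distance $2$ from $u'$, giving $d(u,z)\le d(u,u')+d(u',z)=m$ and contradicting $d(u,z)=m+1$. Hence $s=0$, the quasi-median is a median of $u,v,w$, namely a common neighbor of $v$ and $w$ at distance $m-1$ from $u$, which is the required vertex $x$.

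The main obstacle will be the forward equidistance step. Equilaterality alone (the conclusion available from meshedness) controls only the single good shortest path produced by Theorem \ref{th:metric-tr}, and says nothing about vertices of $I(u,v)$ lying off it; so the delicate part is engineering the descent along an \emph{arbitrary} shortest $(u,v)$-path so that each application of the triangle or quadrangle condition strictly advances toward an endpoint without moving closer to $w$, until the metric-triangle property $I(u,v)\cap I(u,w)=\{u\}$ (or its $v$-analogue) is actually violated. By contrast, the converse reduces cleanly to the quasi-median computations above, with the far common neighbor $z$ playing exactly the role of excluding the size-two quasi-median in the quadrangle case.
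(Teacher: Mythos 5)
The paper itself does not prove this lemma; it is quoted verbatim from Chepoi~\cite{Ch1989-m-tr}, so your attempt can only be judged against the known argument. Your backward direction (strongly equilateral metric triangles $\Rightarrow$ weakly modular) is correct and essentially complete: passing to a quasi-median, ruling out size $0$ in the triangle condition and size $1$ in the quadrangle condition via the symmetry $d(u,v)=d(u,w)$, and then using strong equilaterality (not mere equilaterality) to place the common neighbor $z\in I(v,w)$ at distance exactly $2$ from $u'$ and thereby kill the size-$2$ case, is precisely how this implication is proved in the literature. That last point is the one place where strong equilaterality is genuinely needed, and you use it correctly.

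The forward direction, however, has a genuine gap. What you give is a declaration of strategy (``induction on $k$'', ``use the triangle condition \dots and the quadrangle condition \dots to reroute the path and descend''), not a proof: no concrete configuration to which the triangle or quadrangle condition applies is ever exhibited, no invariant of the claimed descent is stated, and no verification is made that the hypotheses of those conditions --- which require specific \emph{equalities} of distances --- actually hold along an arbitrary shortest $(u,v)$-path. You correctly identify this as ``the main obstacle,'' but flagging the obstacle does not overcome it; this implication is the substantive content of Chepoi's theorem and requires a genuine, carefully organized induction. To locate the missing piece precisely: meshedness (available to you through Corollary~\ref{cor:mtr-equil} and Theorem~\ref{th:metric-tr}) already yields the cheap half without any rerouting. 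For $x\in I(u,v)$, take a quasi-median $u_1x_1w_1$ of the triple $(u,x,w)$; since $u_1\in I(u,x)\cap I(u,w)\subseteq I(u,v)\cap I(u,w)=\{u\}$, we get $u_1=u$, and equilaterality of the metric triangle $ux_1w_1$, say of size $s\le d(u,x)$, gives $d(x,w)=d(x,x_1)+s+d(w_1,w)=d(u,x)+d(u,w)-s\ge d(u,w)$. Thus in any meshed graph every vertex of $I(u,v)$ lies at distance \emph{at least} $k=d(u,w)$ from $w$. What is actually hard --- and entirely absent from your sketch --- is the reverse inequality $d(x,w)\le k$, equivalently $s=d(u,x)$, i.e., $x_1=x$; this is exactly where the triangle and quadrangle conditions must be brought to bear, and your proposal contains no mechanism for doing so.
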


A graph $G$ is called a CB-graph if all its balls are convex. The graphs with convex balls have been introduced and characterized in \cite{bridged-FJ,bridged-SCh} as graphs without embedded isometric cycles of lengths different from 3 and 5 and in which for all pairs of vertices $u$ and $v$, all neighbors of $v$ lying
on a shortest $(u,v)$-path form a clique. One of their important subclasses is the class of bridged graphs:
these are the graphs without embedded isometric cycles of length greater than 3 and they are exactly the
graphs in which the balls around convex sets are convex \cite{bridged-FJ,bridged-SCh}.  A comprehensive investigation of CB-graphs was recently done in \cite{CB-graphs2023}. Among other very interesting results, it was shown that in CB-graphs, metric triangles behave quite similarly to metric triangles
in weakly modular graphs.  

\begin{lemma} [\cite{CB-graphs2023}] \label{lem:CB-metr-tr} Every metric triangle of a CB-graph is either  strongly equilateral or has type $(2,2,1)$. 
\end{lemma}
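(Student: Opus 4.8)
The plan is to argue by minimal counterexample, turning the two defining features of a CB-graph into an obstruction. Write a metric triangle with vertices $x,y,z$ and set $a=d(y,z)\le b=d(x,z)\le c=d(x,y)$, so its type is $(c,b,a)$. Call a metric triangle \emph{bad} if it is neither strongly equilateral nor of type $(2,2,1)$; the goal is to show no bad triangle exists. Since $G$ is finite, I may fix a bad metric triangle $xyz$ of minimum perimeter $a+b+c$. I will repeatedly use: (i) every embedded isometric cycle of $G$ has length $3$ or $5$; (ii) for every ordered pair $(s,t)$ the neighbours of $t$ on shortest $(s,t)$-paths form a clique; and (iii) balls are convex, so $I(p,q)\subseteq B(o,r)$ whenever $p,q\in B(o,r)$.

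\medskip\noindent\textbf{Step 1: the boundary cycle and the easy case.} Fix geodesic sides $P(x,y),P(y,z),P(z,x)$; by the metric-triangle condition $I(x,y)\cap I(x,z)=\{x\}$ (and its symmetric versions) consecutive sides meet only at their common apex, so their union $C$ is a genuine embedded cycle of length $a+b+c$. Suppose first that $C$ is \emph{isometric}. Then (i) forces $a+b+c\in\{3,5\}$. If the perimeter is $3$ then $(c,b,a)=(1,1,1)$, the three vertices are pairwise adjacent, each side-interval is a single edge, and the triangle is strongly equilateral. If the perimeter is $5$ then the only splitting of $5$ into three parts obeying the triangle inequality $c\le a+b$ is $(c,b,a)=(2,2,1)$, since $(3,1,1)$ violates $c\le a+b$. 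In either case $xyz$ is not bad, a contradiction. Hence \emph{$C$ is not isometric}: there are vertices $p,q\in C$ with $d_G(p,q)$ strictly smaller than both arc-lengths of $C$ between them, and since each side is itself a geodesic, $p$ and $q$ must lie on two different sides.

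\medskip\noindent\textbf{Step 2 (the crux): shortening to a smaller bad triangle.} The main obstacle is to convert such a shortcut into a strictly smaller bad metric triangle, which will contradict minimality. The route I would pursue is to choose an innermost shortcut (minimising $d_G(p,q)$, and among those the enclosed arc), fix a geodesic $R=P(p,q)$, and consider the smaller region bounded by $R$ and the shorter boundary arc; taking a quasi-median of a suitable triple among $\{x,y,z,p,q\}$ inside this region produces a metric triangle $x'y'z'$ of strictly smaller perimeter. The work is then to certify that $x'y'z'$ is again bad. Here ball convexity (iii) keeps the distance from each apex to the opposite side under control, so the shortcut cannot be absorbed into one of the intervals, while the clique property (ii), applied at the apexes, prevents the new corners from acquiring a common neighbour that would make $x'y'z'$ strongly equilateral or collapse it to type $(2,2,1)$. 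A shortest chord together with ball convexity ought to create an embedded $4$-cycle or an embedded cycle of length $\ge 6$ unless it descends to such a smaller bad triangle; property (i) excludes the former, leaving only the reduction. This ``harmlessness'' analysis is exactly the step that separates the two conclusions, and I expect it to be the delicate part: one must check every configuration of the shortcut (whether $p,q$ share an apex or lie on two distinct sides, and whether either coincides with an apex) and verify that the perimeter strictly drops while badness is preserved.

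\medskip\noindent\textbf{Step 3: conclusion.} Carrying out Step~2 yields a bad metric triangle of smaller perimeter, contradicting the minimality of $xyz$; hence no bad triangle exists, which is the claim. It is essential that the reduction genuinely uses the non-isometry of $C$ rather than a general property of metric-triangle boundaries: for a strongly equilateral triangle of large size—such as a corner triangle of a triangular grid, which is a CB-graph—the boundary cycle is non-isometric yet \emph{no} bad sub-triangle arises, so the failure of strong equilaterality (equivalently, the local failure of the condition in Lemma~\ref{lem:wmg-metr-tr}) must be what drives the argument through in Step~2. That is precisely where the convexity of balls and the clique-of-neighbours property of CB-graphs do the real work.
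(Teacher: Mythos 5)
This lemma is not proved in the paper at all: it is imported from \cite{CB-graphs2023}, so there is no in-paper argument to compare yours against, and your proposal must stand on its own. It does not. Step 1 is sound: the sides of a metric triangle pairwise meet only in the apexes, so they form a simple cycle $C$, and if $C$ is isometric then the CB-characterization (embedded isometric cycles have length $3$ or $5$) forces type $(1,1,1)$ (strongly equilateral) or $(2,2,1)$. But Step 2 --- the entire substance of the minimal-counterexample induction --- is a plan, not a proof, as you yourself concede (``the route I would pursue'', ``I expect it to be the delicate part''). Nothing there is established: you do not specify which triple's quasi-median is taken, you do not show the resulting metric triangle has strictly smaller perimeter, and, most importantly, you do not show it is again bad. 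The assertions that the clique-of-neighbours property ``prevents the new corners from acquiring a common neighbour'' and that a shortest chord ``ought to create an embedded $4$-cycle or an embedded cycle of length $\ge 6$ unless it descends'' to a smaller bad triangle are unsupported; note also that badness is a property of \emph{intervals}, not of the chosen geodesic sides, so even certifying that a constructed triangle is a metric triangle of the right type requires work your sketch does not contain.

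Worse, your own Step 3 identifies why the plan cannot work as stated: CB-graphs (for instance bridged graphs, such as large triangular grids) contain strongly equilateral metric triangles of arbitrarily large size, and their boundary cycles are just as non-isometric as that of a hypothetical bad triangle. Any reduction driven only by ``non-isometry of $C$ plus surgery'' would therefore apply equally to these good triangles, where it must not output a bad one. So the reduction has to consume the badness of $xyz$ in some essential way, and your proposal offers no mechanism that does so --- you merely flag that one is needed. As it stands, the argument reduces the statement of Lemma \ref{lem:CB-metr-tr} to itself; the central step, which is exactly the hard content supplied in \cite{CB-graphs2023}, is missing.
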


To show that all CB-graphs satisfying a ($\lambda,\mu$)-bow metric are hyperbolic, we first prove the following variant of Lemma  \ref{lm:metric-tr-size}. 

\begin{lemma} \label{lm:wmg+CB--metric-tr-size} 
Let $G$ be a graph satisfying a ($\lambda,\mu$)-bow metric. If all metric triangles of $G$ with maximum
side-length larger than $\psi$ are strongly equilateral, then  all metric triangles of $G$ have maximum
side-length at most $\max\{\psi,\lambda+2\mu+1\}$. 
\end{lemma}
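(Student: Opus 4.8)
The plan is to follow the proof of Lemma~\ref{lm:metric-tr-size} as closely as possible, replacing the meshedness of $G$ (which there forces \emph{every} metric triangle to be equilateral) by the present hypothesis, which only forces the \emph{large} metric triangles to be strongly equilateral. Let $uvw$ be a metric triangle of maximum side-length $k:=\max\{d(u,v),d(v,w),d(w,u)\}$. If $k\le\psi$ there is nothing to prove, so assume $k>\psi$; then $uvw$ is strongly equilateral, hence equilateral of size $k$, and moreover every vertex of $I(w,u)$ is at distance $k$ from $v$ and every vertex of $I(w,v)$ is at distance $k$ from $u$. It suffices to show $k\le\lambda+2\mu+1$.

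First I would reproduce the auxiliary construction of Lemma~\ref{lm:metric-tr-size}, checking that strong equilaterality is exactly what replaces the meshed ingredients. Set $t:=\lambda+\mu+1$; in the interesting range $k>\lambda+2\mu+1\ge t$, so we may pick $x\in I(w,u)$ and $y\in I(w,v)$ with $d(w,x)=d(w,y)=t$, whence $d(v,x)=d(u,y)=k$. Let $x'y'w'$ be a quasi-median of $x,y,w$ and put $a:=d(w,x')$, $b:=d(w,y')$, $c:=d(x',y')$. Because $uvw$ is a metric triangle, $w'\in I(w,u)\cap I(w,v)=\{w\}$, so $w'=w$; then $x'\in I(w,u)$ and $y'\in I(w,v)$, and strong equilaterality gives $d(v,x')=k=d(u,y')$, $d(u,x')=k-a$ and $d(v,y')=k-b$. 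The triangle inequality through $x'$ and $y'$ yields $a\le c$ and $b\le c$, and since $d(x,y)=(t-a)+c+(t-b)$ one obtains the key estimate $c\ge 2t-d(x,y)$.

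The main dichotomy is whether the metric triangle $x'y'w$ is equilateral. If $\max\{a,b,c\}=c>\psi$, the hypothesis makes $x'y'w$ strongly equilateral, hence $a=b=c=:\ell$, and then the two cases of Lemma~\ref{lm:metric-tr-size} apply verbatim: if $\ell\ge\lambda+1$, a single application of $(\lambda,\mu)$-bow metric whose two geodesics overlap along $x'y'$ gives $k\le\ell+\mu\le t+\mu=\lambda+2\mu+1$, and if $\ell<\lambda+1$ the application to $v,y,x',x$ produces the contradiction $\ell\ge\lambda+1$. By the estimate $c\ge 2t-d(x,y)$, this equilateral regime in particular covers every configuration with $d(x,y)\le 2t-\psi-1$. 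The degenerate extreme $d(x,y)=2t$, i.e.\ $w\in I(x,y)$, is also immediate: then $y\in I(v,w)$ and $w\in I(y,x)$ with $d(y,w)=t>\lambda$, so bow metric gives $d(v,x)\ge(k-t)+t+t-\mu=k+t-\mu$, contradicting $d(v,x)=k$.

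The hard part is the narrow remaining window $2t-\psi\le d(x,y)<2t$, where $x'y'w$ is a genuinely non-equilateral metric triangle of maximum side-length at most $\psi$ (so bow metric cannot be invoked along its short base, which may have length $\le\lambda$) and $w$ sits within distance $\psi$ of being a median of $x$ and $y$. Here I would exploit the partial collinearities that survive: one always has $x',y'\in I(x,y)$, while $x\in I(u,y')$ holds exactly when $a=c$ and $y\in I(v,x')$ exactly when $b=c$, so the degeneracy is at most one-sided. Applying $(\lambda,\mu)$-bow metric along whichever side survives (e.g.\ to $v,y,x',x$ when $b=c$) forces the remaining leg to satisfy $a\ge\lambda+1$, which pins down the shape of $x'y'w$; I expect to close the case either by pushing the construction toward the already-handled median limit, or by a minimal-counterexample argument ruling out a bounded-size non-equilateral quasi-median resting on an arbitrarily long strongly equilateral triangle. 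Controlling this bounded but non-equilateral quasi-median in the near-median window is where the genuine difficulty lies, and where the $\psi$ term of the stated bound originates.
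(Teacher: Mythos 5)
Your proof is complete only in the regime where the quasi-median $x'y'w'$ of $x,y,w$ is equilateral: you correctly reduce to $w'=w$, derive $c\ge\max\{a,b\}$, settle the case $c>\psi$ (both subcases $\ell\ge\lambda+1$ and $\ell<\lambda+1$) exactly as in Lemma~\ref{lm:metric-tr-size}, and dispose of the extreme $d(x,y)=2t$. But the remaining window --- a non-equilateral quasi-median with $\max\{a,b,c\}=c\le\psi$ --- is left open, and what you offer there (one-sided collinearities, a hoped-for minimal-counterexample argument) is not a proof. Without $a=b=c$, none of the collinearity hypotheses needed to invoke the bow metric along $x'y'$ or along $yx'$ hold (as you note, $y\in I(v,x')$ iff $b=c$, and $x'\in I(u,y')$ iff $a=c$), so the two applications that finish the equilateral case are simply unavailable. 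As submitted, the proposal does not prove the lemma.

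What makes this worth stating precisely is that the paper's own proof does not treat this case either. The paper asserts the chain $k=d(v,x')=d(v,y)+d(y,y')+d(y',x')$ and concludes $d(y',x')=d(y',w)$; but since $d(v,x')=k$, $d(v,y)=k-t$ and $d(y,y')=t-d(w,y')$, that middle equality says exactly $d(x',y')=d(w,y')$, i.e.\ it presupposes the equilaterality being derived (the triangle inequality gives only $d(x',y')\ge d(w,y')$, which is your estimate $c\ge b$). The only legitimate source of equilaterality is the lemma's hypothesis, which applies only when $c>\psi$; the complementary case $c\le\psi$ is passed over in silence. So you followed the paper's route faithfully and, to your credit, located its weak point rather than reproducing it. For what it is worth, the gap can be closed at the price of a slightly weaker constant: choose $d(w,x)=d(w,y)=t:=\lambda+\mu+\psi+1$ (if $k<t$ there is nothing to prove). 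If $c\le\psi$, then $d(x,x')=t-a\ge\lambda+\mu+1>\lambda$, while $x\in I(u,x')$ and $x'\in I(x,y)$ always hold, so the bow metric gives $k=d(u,y)\ge d(u,x)+d(x,x')+d(x',y)-\mu=k+(t-\mu)-(a+b-c)\ge k+\lambda+1$, a contradiction, using $a+b-c\le\min\{a,b\}\le\psi$. Hence $c>\psi$, the quasi-median is equilateral, and your argument then yields $k\le\ell+\mu\le t+\mu=\lambda+2\mu+\psi+1$. This proves the lemma with $\max\{\psi,\lambda+2\mu+1\}$ replaced by $\lambda+2\mu+\psi+1$, which suffices, up to constants, for all of the paper's downstream applications (in particular for CB-graphs, where $\psi=2$).
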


\begin{proof}
Consider a metric triangle $uvw$ in $G$ and assume that its maximum side-length is larger than $\psi$. Then, it is a strongly equilateral metric triangle  of size $k:=d(u,v)>\psi$. Consider arbitrary shortest paths $P(w,u)$ and $P(w,v)$. Since $uvw$ is a strongly equilateral metric triangle, we have $d(u,y)=k$, for all $y\in I(w,v)\supseteq P(w,v)$, and $d(v,x)=k$, for all $x\in I(w,u)\supseteq P(w,u)$. Assuming $k\ge \lambda+\mu+1$, pick vertices $x\in P(w,u)$ and $y\in P(w,v)$ such that $d(w,x)=d(w,y)=\lambda+\mu+1$. We know $d(v,x)=d(u,y)=k$. 
Let $x'y'w'$ be a quasi-median of $x,y$ and $w$. Since  $uvw$ is a metric triangle, necessarily, $w=w'$. We also know that $d(v,x')=d(u,y')=k$ (as $x'\in I(w,x)\subseteq I(w,u)$ and $y'\in I(w,y)\subseteq I(w,v)$). 
But then, $k=d(v,w)=d(v,y)+d(y,y')+d(y',w)$ and $k=d(v,x')=d(v,y)+d(y,y')+d(y',x')$ imply $d(y',x')=d(y',w)$. Similarly, $d(y',x')=d(x',w)$ must hold. Hence, $x'y'w$ is an equilateral metric triangle. Let $\ell:=d(w,x')=d(x',y')=d(x',w)$.  See again Fig. \ref{fig:three} 
for an illustration. 

We have  $x'\in I(x,y)$ by the definition of a quasi-median. Additionally, $d(y,x')=d(y,w)=\lambda+\mu+1$ and $y\in I(v,x')$ hold since $k=d(v,x')\le d(v,y)+d(y,x')=d(v,y)+d(y,y')+d(y',x')=d(v,y)+d(y,y')+\ell=d(v,y)+d(y,y')+d(y',w)=d(v,y)+d(y,w)=d(v,w)=k$. 
Applying $(\lambda,\mu)$-bow metric, we get $k=d(v,x)\ge d(v,x')+d(x',x)-\mu=d(v,x')+d(w,x)-\ell-\mu =k+\lambda+\mu+1-\ell-\mu$. That is, $\ell\ge \lambda+1$.  
Now, we have $y'\in I(v,x')$ and $x'\in I(u,y')$ since $k=d(v,x')\le d(v,y')+d(y',x')=d(v,y')+\ell=d(v,y')+d(y',w)=d(v,w)=k$ and, similarly, $k=d(u,y')=d(u,x')+d(x',y')$. Applying $(\lambda,\mu)$-bow metric, we get $k=d(v,u)\ge d(v,x')+d(x',u)-\mu =k+k-\ell-\mu$, i.e, $k\le \ell+\mu= d(w,x')+\mu\le d(w,x)+\mu=\lambda+\mu+1+\mu=\lambda+2\mu+1$. 
 \end{proof}

For CB-graphs, we have the following corollary. 

\begin{corollary} \label{cor:CB-metr-triangles} 
Let $G$ be a CB-graph satisfying a ($\lambda,\mu$)-bow metric. Then, all metric triangles of $G$ have maximum
side-length at most $\max\{2,\lambda+2\mu+1\}$. 
\end{corollary}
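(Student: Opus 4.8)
The plan is to combine the structural classification of metric triangles in CB-graphs (Lemma~\ref{lem:CB-metr-tr}) with the general size bound established in Lemma~\ref{lm:wmg+CB--metric-tr-size}. The statement to prove is that, for a CB-graph $G$ satisfying $(\lambda,\mu)$-bow metric, every metric triangle has maximum side-length at most $\max\{2,\lambda+2\mu+1\}$, so the whole corollary amounts to an instantiation of the preceding lemma with the right value of $\psi$.

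First I would invoke Lemma~\ref{lem:CB-metr-tr}, which asserts that every metric triangle of a CB-graph is either strongly equilateral or has type $(2,2,1)$. The point is that the type-$(2,2,1)$ triangles are the only metric triangles that need not be strongly equilateral, and each such triangle has maximum side-length exactly $2$. Therefore any metric triangle of $G$ whose maximum side-length exceeds $2$ cannot be of type $(2,2,1)$ and must be strongly equilateral. In other words, the hypothesis of Lemma~\ref{lm:wmg+CB--metric-tr-size} holds with the threshold $\psi=2$: all metric triangles with maximum side-length larger than $2$ are strongly equilateral.

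Then I would simply apply Lemma~\ref{lm:wmg+CB--metric-tr-size} with $\psi=2$. That lemma concludes that all metric triangles of $G$ have maximum side-length at most $\max\{\psi,\lambda+2\mu+1\}=\max\{2,\lambda+2\mu+1\}$, which is exactly the desired bound. Since Lemma~\ref{lm:wmg+CB--metric-tr-size} only requires that $G$ satisfy a $(\lambda,\mu)$-bow metric and that all sufficiently large metric triangles be strongly equilateral — both of which we have verified — the corollary follows immediately.

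The only subtlety, and the one place where care is warranted rather than any real obstacle, is checking that the threshold $\psi=2$ is correctly matched to the classification: one must confirm that every non-strongly-equilateral metric triangle in a CB-graph indeed has maximum side-length at most $2$, which is precisely the content of the type-$(2,2,1)$ alternative in Lemma~\ref{lem:CB-metr-tr}. There is no heavy computation here; the corollary is genuinely a short deduction, with all the metric work already carried out in Lemma~\ref{lm:wmg+CB--metric-tr-size}. I would therefore keep the proof to two or three sentences, explicitly naming $\psi=2$ and citing both lemmas.
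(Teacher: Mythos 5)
Your proposal is correct and matches the paper's own (implicit) argument exactly: the corollary is stated right after Lemma~\ref{lm:wmg+CB--metric-tr-size} precisely so that it follows by combining Lemma~\ref{lem:CB-metr-tr} (non-strongly-equilateral metric triangles in a CB-graph have type $(2,2,1)$, hence maximum side-length $2$) with Lemma~\ref{lm:wmg+CB--metric-tr-size} instantiated at $\psi=2$. Your identification of $\psi=2$ as the correct threshold and your check that every triangle of maximum side-length greater than $2$ is strongly equilateral is exactly the intended deduction.
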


Proposition \ref{prop:bow-int-thin} can be improved for CB-graphs. 

\begin{lemma}\label{lm:CB-leanness}
Let $G$ be a CB-graph satisfying a ($\lambda,\mu$)-bow metric. Then, the intervals of $G$ are $\max\{2,\lambda+2\mu+1\}$-thin. 
\end{lemma}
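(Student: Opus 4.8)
The plan is to bound the interval thinness of a CB-graph $G$ by invoking Proposition~\ref{prop:bow-int-thin} together with the just-established bound on metric triangle side-lengths from Corollary~\ref{cor:CB-metr-triangles}. Recall that Proposition~\ref{prop:bow-int-thin} states that if $G$ satisfies $(\lambda,\mu)$-bow metric and the metric triangles of $G$ have sides of length at most $q$, then the intervals of $G$ are $p$-thin for $p\le\max\{\mu,q+2\lambda\}$. The key preliminary fact I would use is Corollary~\ref{cor:CB-metr-triangles}, which tells us that in a CB-graph satisfying $(\lambda,\mu)$-bow metric every metric triangle has maximum side-length at most $q:=\max\{2,\lambda+2\mu+1\}$.

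First I would substitute this value of $q$ into the bound of Proposition~\ref{prop:bow-int-thin} to get that the intervals are $p$-thin with $p\le\max\{\mu,\ \max\{2,\lambda+2\mu+1\}+2\lambda\}$. The substantive work is then to show this right-hand side collapses to exactly $\max\{2,\lambda+2\mu+1\}$. I would argue that $\mu\le\lambda+2\mu+1\le\max\{2,\lambda+2\mu+1\}$, so the $\mu$-term is dominated; and that $\max\{2,\lambda+2\mu+1\}+2\lambda$ is already accounted for, since in the regime where the second term of Proposition~\ref{prop:bow-int-thin} could be large, the stronger triangle bound for CB-graphs feeds back through the proof of Proposition~\ref{prop:bow-int-thin} itself.

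Concretely, the cleanest route is not merely to plug $q$ into the statement of Proposition~\ref{prop:bow-int-thin}, but to re-run its short argument with the CB-specific triangle structure. In that proof, for $x,y\in S_k(u,v)$ one takes a quasi-median $x'y'v'$ of $x,y,v$; the $(\lambda,\mu)$-bow metric forces $d(x,x'),d(y,y')\le\lambda$, and then $d(x,y)\le d(x,x')+d(x',y')+d(y',y)\le 2\lambda+d(x',y')$. Since $x'y'v'$ is itself a metric triangle of the CB-graph, its side $d(x',y')$ is bounded by $\max\{2,\lambda+2\mu+1\}$ via Corollary~\ref{cor:CB-metr-triangles}; combining this with the alternative bound $d(x,y)\le\mu$ from the other branch of the argument yields $d(x,y)\le\max\{2,\lambda+2\mu+1\}$ directly.

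The main obstacle I anticipate is bookkeeping the $\max$ carefully so that the $2\lambda$ additive slack coming from the generic Proposition~\ref{prop:bow-int-thin} does not survive into the final bound. The resolution is that the improvement for CB-graphs is genuinely obtained by re-examining the proof rather than black-boxing the proposition: because CB-graph metric triangles are either strongly equilateral or of the rigid type $(2,2,1)$ (Lemma~\ref{lem:CB-metr-tr}), the quasi-median $x'y'v'$ has controlled geometry, and one can verify that the quantity $d(x,x')$ is absorbed into the triangle side-length bound rather than adding on top of it. I would therefore present the lemma's proof as a direct adaptation of the proof of Proposition~\ref{prop:bow-int-thin}, substituting the sharper CB-graph triangle bound at the step where $d(x',y')$ is estimated, and confirming that both branches of the case analysis terminate at $\max\{2,\lambda+2\mu+1\}$.
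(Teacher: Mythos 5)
There is a genuine gap in your proposal, and it sits exactly at the point you yourself flag as the ``main obstacle.'' Re-running the proof of Proposition~\ref{prop:bow-int-thin} with the CB triangle bound $q=\max\{2,\lambda+2\mu+1\}$ only gives $d(x,y)\le d(x,x')+d(x',y')+d(y',y)\le 2\lambda+\max\{2,\lambda+2\mu+1\}$ in the non-trivial branch, because the bow metric yields only $d(x,x')\le\lambda$ and $d(y,y')\le\lambda$ --- nothing smaller. Your claimed resolution, that the classification of CB metric triangles as strongly equilateral or of type $(2,2,1)$ (Lemma~\ref{lem:CB-metr-tr}) lets the quantity $d(x,x')$ be ``absorbed into the triangle side-length bound,'' is never substantiated and cannot work as stated: the triangle type constrains the triangle $x'y'v'$ itself, but says nothing about the distance from $x$ to its quasi-median vertex $x'$, so the additive $2\lambda$ survives and the bound you actually reach is strictly weaker than $\max\{2,\lambda+2\mu+1\}$ whenever $\lambda>0$.

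The missing idea is the defining property of CB-graphs, which your argument never invokes: convexity of balls. The paper's proof takes $x,y\in S_k(u,v)$ and the quasi-median $x'y'v'$ of $x,y,v$, and shows $x=x'$ \emph{exactly} (not merely $d(x,x')\le\lambda$): since $x'\in I(x,v)$ and $d(u,x)=k$, one has $d(u,x')\ge d(u,v)-d(x',v)=k+d(x,x')$, so if $d(x,x')>0$ then $x'\notin B(u,k)$; but $x'$ lies on a shortest $(x,y)$-path (through $x'$ and $y'$, by the quasi-median property) with both endpoints $x,y\in B(u,k)$, contradicting convexity of the ball $B(u,k)$. Hence $x=x'$ and $y=y'$, and $d(x,y)=d(x',y')\le\max\{2,\lambda+2\mu+1\}$ follows from Corollary~\ref{cor:CB-metr-triangles} with no $2\lambda$ slack at all. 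Note that once convexity is used this way, the bow metric enters only through Corollary~\ref{cor:CB-metr-triangles}; your case analysis involving the branch $d(x,y)\le\mu$ becomes unnecessary.
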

\begin{proof} 
Consider arbitrary vertices $u,v$ in $G$ and arbitrary vertices $x,y\in S_k(u,v)$, where $1\le k \le d(u,v)-1$. 
Let $x'y'v'$ be a quasi-median of $x,y,v$. Since $x'y'v'$ is a metric triangle of $G$, we have $d(x',y')\le  \max\{2,\lambda+2\mu+1\}$. If $d(x,x')>0$, then a shortest path between $x$ and $y$ passing through $x'$ and $y'$ violates the convexity of ball $B(u,k)$ as we then have $x,y\in B(u,k)$ and $x'\notin B(u,k)$. 
Hence, $x=x'$ (and, similarly, $y=y'$) must hold, giving $d(x,y)=d(x',y')\le  \max\{2,\lambda+2\mu+1\}$.
\end{proof}

By Proposition  \ref{prop:delta-hyp-1st}, Proposition \ref{prop:delta-hyp-2nd}, Corollary \ref{cor:CB-metr-triangles}, and Lemma  \ref{lm:CB-leanness},  CB-graphs satisfying a ($\lambda,\mu$)-bow metric are hyperbolic.   

 \begin{theorem} \label{th:CB-hyperb}  
CB-graphs satisfying a ($\lambda,\mu$)-bow metric have slimness at most $\frac{5}{2}\max\{2,\lambda+2\mu+1\}$ and 
are $\frac{5}{2}\max\{2,\lambda+2\mu+1\}$-hyperbolic. 
\end{theorem}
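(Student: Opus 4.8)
The plan is to combine the two CB-specific structural bounds already established — the bound on metric-triangle side lengths and the bound on interval thinness — and feed them into the two general hyperbolicity estimates proved earlier in this section. First I would set $s := \max\{2,\lambda+2\mu+1\}$. By Corollary~\ref{cor:CB-metr-triangles}, every metric triangle of $G$ has maximum side-length at most $s$, so I may take $q = s$ in the general propositions. By Lemma~\ref{lm:CB-leanness}, the intervals of $G$ are $s$-thin, so I may take $p = s$. The key observation is that here $p = q = s$, which makes both general bounds collapse to clean multiples of $s$.

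Next I would simply substitute $p = q = s$ into the two general results. Proposition~\ref{prop:delta-hyp-1st} yields slimness at most $2p + \frac{q}{2} = \frac{5}{2}s$ (and hyperbolicity at most $4p + q + \frac{1}{2} = 5s + \frac{1}{2}$), while Proposition~\ref{prop:delta-hyp-2nd} yields hyperbolicity at most $2q + \frac{p}{2} = \frac{5}{2}s$ (and slimness at most $6q + \frac{3}{2}p + \frac{1}{2} = \frac{15}{2}s + \frac{1}{2}$). Taking the sharper of the two estimates for each parameter — Proposition~\ref{prop:delta-hyp-1st} for the slimness and Proposition~\ref{prop:delta-hyp-2nd} for the hyperbolicity — gives both $\varsigma(G) \le \frac{5}{2}s$ and $\delta(G) \le \frac{5}{2}s$, which is exactly the claimed bound $\frac{5}{2}\max\{2,\lambda+2\mu+1\}$.

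There is essentially no obstacle left once the two CB-specific lemmas are in hand: all the real work of the theorem is carried out in Corollary~\ref{cor:CB-metr-triangles} and Lemma~\ref{lm:CB-leanness}, and this final step is a routine substitution into already-proven inequalities. The only point requiring minor care is bookkeeping — choosing the correct proposition for each of the two parameters so that the reported value is the minimum of the two available bounds, rather than accidentally quoting the weaker estimate for slimness (from Proposition~\ref{prop:delta-hyp-2nd}) or for hyperbolicity (from Proposition~\ref{prop:delta-hyp-1st}).
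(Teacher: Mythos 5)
Your proposal is correct and matches the paper's own proof, which likewise invokes Corollary~\ref{cor:CB-metr-triangles} and Lemma~\ref{lm:CB-leanness} to get $p=q=\max\{2,\lambda+2\mu+1\}$ and then reads the slimness bound $2p+\frac{q}{2}$ from Proposition~\ref{prop:delta-hyp-1st} and the hyperbolicity bound $2q+\frac{p}{2}$ from Proposition~\ref{prop:delta-hyp-2nd}. The only difference is that you spell out the substitution arithmetic, which the paper leaves implicit.
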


\subsection{Modular graphs, Pseudo-modular graphs, Median graphs, and Helly graphs}\label{subsec:MedianHellyETC}
A graph is called {\em median} if $|I(u, v) \cap I(v, w) \cap I(w, v)| = 1$ for every triplet $u, v, w$ of vertices, that is, every triplet of vertices has a unique median (i.e., quasi-median of size 0). Median graphs can be characterized in
several different ways and they play an important role in geometric group theory, concurrency, as well as in combinatorics (see survey ~\cite{BaCh-survey}). 
A graph is called {\em modular} if $I(u, v) \cap I(v, w) \cap I(w, u) \neq\emptyset$ for every triplet $u,v, w$ of vertices, i.e.,
every triplet of vertices admits a (not necessarily unique) median. Clearly median graphs are modular.
Modular graphs are exactly the weakly modular graphs in which all metric triangles of G have size 0. 
A graph $G$ is called {\em pseudo-modular} if any three pairwise intersecting balls of $G$ have a nonempty common intersection \cite{ps-modular}. This condition easily implies both the triangle and quadrangle conditions, and thus
pseudo-modular graphs are weakly modular. In fact, pseudo-modular graphs are quite specific weakly
modular graphs: from the definition also follows that all metric triangles of pseudo-modular graphs have
size 0 or 1, i.e., each metric triangle is either a single vertex or is a triangle of $G$.
Recall also that a graph G is a  {\em Helly} graph if the family of balls of $G$ has the Helly property, that is, every 
collection of pairwise intersecting balls of $G$ has a nonempty common intersection. From the definition
it immediately follows that Helly graphs are pseudo-modular. Helly graphs are the discrete analogues
of hyperconvex spaces: namely, the requirement that radii of balls are from the nonnegative reals is
modified by replacing the reals by the integers. In perfect analogy with hyperconvexity, there is a close
relationship between Helly graphs and absolute retracts: absolute retracts and Helly graphs are the same \cite{BanPesch,retracts}. In particular, for any graph $G$ there exists a smallest Helly graph comprising $G$ as an isometric
subgraph. 

It is known that the hyperbolicity of a median graph as well as of a Helly graph is governed by the thinness of its intervals \cite{CCHO-WMG2020,delta-hyp-1st,obstructions}. Our Corollary \ref{cor:delta-hyp-2nd-equidist} generalizes those results. Recall that for every $\delta$-hyperbolic graph, $\tau(G)\le 2\delta$.  

\begin{proposition}  \label{prop:Helly-hyp-th} 
If the intervals of a pseudo-modular $($in particular, of a Helly graph$)$ $G$ are $p$-thin, then $G$ is $\delta$-hyperbolic for $\frac{p}{2}\le \delta\le \frac{p+1}{2}$.  
\end{proposition}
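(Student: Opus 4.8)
The plan is to establish the two inequalities separately, since both reduce to results already available in the excerpt. Throughout I read $p$ as the exact interval thinness $\tau(G)$, which is the interpretation under which the two-sided estimate actually pins down $\delta$.

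For the lower bound $\delta \ge \frac{p}{2}$, I would invoke only the general fact recorded in the preliminaries that every $\delta$-hyperbolic graph has interval thinness at most $2\delta$. This gives $p = \tau(G) \le 2\delta(G)$, which rearranges to $\delta(G) \ge \frac{p}{2}$. This half holds for \emph{every} graph and uses nothing about pseudo-modularity.

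For the upper bound $\delta \le \frac{p+1}{2}$, the idea is to apply Corollary \ref{cor:delta-hyp-2nd-equidist}, which already turns an interval-thinness bound $p$ together with an equilateral metric-triangle size bound $q$ into the hyperbolicity bound $\frac{p+q}{2}$. So the whole task is to verify that a pseudo-modular graph satisfies that corollary's hypotheses with $q=1$. Two facts suffice. First, pseudo-modular graphs are weakly modular, so by Lemma \ref{lem:wmg-metr-tr} all their metric triangles are strongly equilateral and in particular equilateral, as the corollary requires. Second, as recalled in this subsection, every metric triangle of a pseudo-modular graph has size $0$ or $1$, so its maximum side-length is at most $q=1$. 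Feeding $p=\tau(G)$ and $q=1$ into Corollary \ref{cor:delta-hyp-2nd-equidist} yields $\delta(G) \le \frac{p+1}{2}$. The Helly case is then immediate, since Helly graphs are pseudo-modular.

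I expect essentially no genuine obstacle here: all the analytic work is buried inside Corollary \ref{cor:delta-hyp-2nd-equidist}, and the proof is a matter of correctly matching hypotheses. The one point deserving care is the reading of $p$ — the lower inequality can fail for a loose bound $\tau(G) < p$ (for instance on a tree, where $\tau(G)=0$ but $p$ may be taken arbitrarily large), so the statement must be understood with $p$ equal to the true interval thinness. I would also note that in the purely modular subcase one in fact has $q=0$ and hence $\delta \le \frac{p}{2}$; using $q=1$ uniformly is safe and delivers exactly the claimed $\frac{p+1}{2}$ in general.
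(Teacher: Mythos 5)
Your proof is correct and matches the paper's intended argument exactly: the paper states this proposition without a separate proof, deriving it implicitly from the fact recalled just before it (every $\delta$-hyperbolic graph has $\tau(G)\le 2\delta$, giving the lower bound) together with Corollary \ref{cor:delta-hyp-2nd-equidist} applied with $q=1$, since metric triangles of pseudo-modular graphs are (strongly) equilateral of size at most $1$. Your added remark that the lower bound requires reading $p$ as the exact interval thinness $\tau(G)$ is a legitimate and careful observation about the statement's interpretation.
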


\begin{proposition}  \label{prop:Median-hyp-th} 
If the intervals of a modular $($in particular, of a median graph$)$ $G$ are $p$-thin, then $G$ is $\delta$-hyperbolic for $\delta=\frac{p}{2}$.  
\end{proposition}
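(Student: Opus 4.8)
The plan is to derive this as a special case of Corollary~\ref{cor:delta-hyp-2nd-equidist} by observing that modularity forces the metric-triangle size parameter $q$ to equal $0$. First I would invoke the characterization stated just above the proposition, namely that modular graphs are exactly the weakly modular graphs in which every metric triangle has size $0$. Hence in a modular graph each metric triangle is a single vertex, that is, equilateral of size $q=0$; and since median graphs form a subclass of modular graphs, the same conclusion holds there verbatim.

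With this in hand, I would feed $q=0$ together with the hypothesis that the intervals are $p$-thin into Corollary~\ref{cor:delta-hyp-2nd-equidist}, which guarantees $\frac{p+q}{2}$-hyperbolicity. Taking $q=0$ collapses this bound to $\frac{p}{2}$, establishing the upper bound $\delta\le\frac{p}{2}$.

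For the matching lower bound I would rely on the elementary fact recorded in the Preliminaries that every $\delta$-hyperbolic graph has interval thinness at most $2\delta$. Writing $p=\tau(G)$ for the exact interval thinness, this reads $p\le 2\delta$, i.e.\ $\delta\ge\frac{p}{2}$. Combining the two directions yields $\delta=\frac{p}{2}$, as claimed; the same argument explains why Proposition~\ref{prop:Helly-hyp-th} only achieves the wider range $\frac{p}{2}\le\delta\le\frac{p+1}{2}$, since for pseudo-modular graphs one can only force $q\le 1$ rather than $q=0$.

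I do not expect any serious obstacle here: there is no delicate computation, and the only substantive point is recognizing that the modularity characterization pins $q$ to $0$. This is precisely what makes the general equilateral estimate of Corollary~\ref{cor:delta-hyp-2nd-equidist} tight, so that it meets the universal lower bound coming from the thinness-versus-hyperbolicity inequality and yields exact equality.
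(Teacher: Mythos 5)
Your proposal is correct and takes essentially the same route the paper intends: Proposition~\ref{prop:Median-hyp-th} is presented there as an immediate consequence of Corollary~\ref{cor:delta-hyp-2nd-equidist} applied with $q=0$ (since modular graphs are exactly the weakly modular graphs whose metric triangles all have size $0$), combined with the preliminary fact that every $\delta$-hyperbolic graph has interval thinness at most $2\delta$, which supplies the matching lower bound $\delta\ge\frac{p}{2}$. Your observation that $p$ must be read as the exact interval thinness $\tau(G)$ for the stated equality to hold, and your remark explaining why the pseudo-modular case only yields the range $\frac{p}{2}\le\delta\le\frac{p+1}{2}$, are both consistent with the paper's treatment.
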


Since all metric triangles of modular (and, hence, of median) graphs have size 0 and  all metric triangles of pseudo-modular (and, hence, of Helly) graphs have size 0 or 1,  
from Proposition \ref{prop:delta-hyp-1st}, Corollary \ref{cor:delta-hyp-2nd-equidist}, Proposition \ref{prop:bow-int-thin},  Proposition \ref{prop:Helly-hyp-th} and  Proposition \ref{prop:Median-hyp-th}, we have immediate corollaries.  
 
\begin{corollary} \label{cor:Helly} Pseudo-modular $($and, hence, Helly$)$ graphs satisfying a ($\lambda,\mu$)-bow metric are $\delta$-hyperbolic for $\delta\le \max\{\frac{\mu+1}{2},\lambda+1\}$.  Their slimness $\varsigma$ is bounded as follows:  
\begin{align*}
\varsigma\le\ &
  \left\{\begin{array}{l}
                 \frac{3}{2}\max\{\mu,2\lambda+1\}+2,   \mbox{ when } \lambda\ge 2 \mbox{~ or } \mu\ge 4\\
                2\max\{\mu,2\lambda+1\},  \mbox{~~~~~ otherwise. }
    \end{array}\right.
     \end{align*}
\end{corollary}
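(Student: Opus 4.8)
The plan is to combine the structural facts about pseudo-modular graphs with the general hyperbolicity-from-thinness results already established in the excerpt. The key observation is that pseudo-modular graphs are weakly modular, hence meshed, and by the discussion preceding the corollary all their metric triangles have size $0$ or $1$; in particular the maximum side-length of every metric triangle is bounded by $q=1$. The hyperbolicity bound will therefore follow by feeding the interval thinness together with $q=1$ into the appropriate general propositions.

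First I would bound the interval thinness. Since pseudo-modular graphs are meshed and satisfy $(\lambda,\mu)$-bow metric, I could in principle invoke Corollary~\ref{cor:leanness}, but a sharper bound comes directly from Proposition~\ref{prop:bow-int-thin} applied with $q=1$ (the side length of metric triangles of pseudo-modular graphs): this gives intervals that are $p$-thin with $p\le\max\{\mu,\,2\lambda+1\}$. Next I would obtain the hyperbolicity directly from the specialized thinness-to-hyperbolicity bound for pseudo-modular graphs, namely Proposition~\ref{prop:Helly-hyp-th}, which states $\delta\le\frac{p+1}{2}$. Substituting $p\le\max\{\mu,2\lambda+1\}$ yields
\begin{align*}
\delta\le\frac{\max\{\mu,2\lambda+1\}+1}{2}=\max\left\{\frac{\mu+1}{2},\,\frac{2\lambda+2}{2}\right\}=\max\left\{\frac{\mu+1}{2},\,\lambda+1\right\},
\end{align*}
which is exactly the claimed hyperbolicity bound. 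This is the cleanest route and avoids the weaker generic bounds of Proposition~\ref{prop:delta-hyp-1st} and Proposition~\ref{prop:delta-hyp-2nd}.

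For the slimness bound I would translate these metric parameters through the two complementary hyperbolicity/slimness estimates and take the better one depending on the regime. Using Proposition~\ref{prop:sl-vs-hb} with the $\delta$ just obtained gives $\varsigma\le 3\delta+\frac12\le\frac32\max\{\mu,2\lambda+1\}+2$, which accounts for the first branch. In the complementary small-parameter regime (the ``otherwise'' case, when $\lambda<2$ and $\mu<4$), the size-$q$ triangles contribute a smaller additive constant, and I would instead route the thinness bound $p\le\max\{\mu,2\lambda+1\}$ through Proposition~\ref{prop:delta-hyp-1st} with $q=1$, giving slimness at most $2p+\frac{q}{2}=2\max\{\mu,2\lambda+1\}+\frac12$, or through the equilateral refinement of Corollary~\ref{cor:delta-hyp-2nd-equidist}; a direct comparison of the resulting constants in the two regimes produces the piecewise bound as stated.

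The main obstacle I anticipate is not the hyperbolicity inequality, which is essentially a one-line substitution, but justifying precisely where each piecewise slimness branch is optimal and checking that the regime boundary ``$\lambda\ge 2$ or $\mu\ge 4$'' is the correct crossover point between the two estimates. This requires carefully comparing $\frac32\max\{\mu,2\lambda+1\}+2$ against $2\max\{\mu,2\lambda+1\}$ and verifying which dominates as $\max\{\mu,2\lambda+1\}$ varies; the constant $2$ versus the slope difference of $\frac12$ means the first bound wins once $\max\{\mu,2\lambda+1\}\ge 4$, i.e. once $\mu\ge 4$ or $\lambda\ge 2$, matching the stated threshold. I would therefore present the two candidate estimates explicitly and then take the minimum, being careful that the $q=0$ versus $q=1$ distinction (modular versus merely pseudo-modular) is handled by using the pseudo-modular value $q=1$ uniformly.
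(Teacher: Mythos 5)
Your proposal is correct and follows essentially the same route as the paper: pseudo-modular metric triangles have side length at most $q=1$, Proposition \ref{prop:bow-int-thin} gives interval thinness $p\le\max\{\mu,2\lambda+1\}$, Proposition \ref{prop:Helly-hyp-th} yields the hyperbolicity bound, and the two slimness branches come from the hyperbolicity-to-slimness estimate (equivalently Corollary \ref{cor:delta-hyp-2nd-equidist}) and from Proposition \ref{prop:delta-hyp-1st}, exactly as the paper intends. The one step you leave implicit is dropping the $+\frac{1}{2}$ from $2\max\{\mu,2\lambda+1\}+\frac{1}{2}$: since graph distances are integers, the slimness of a graph is an integer, so this rounds down to $2\max\{\mu,2\lambda+1\}$, and it is this rounded value (not the raw $2p+\frac{q}{2}$ bound) that makes the crossover threshold come out as ``$\lambda\ge 2$ or $\mu\ge 4$'' in your comparison.
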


\begin{corollary} \label{cor:median} Modular $($and, hence, median$)$ 
      graphs satisfying a ($\lambda,\mu$)-bow metric are $\delta$-hyperbolic for $\delta\le \max\{\frac{\mu}{2},\lambda\}$.  Their slimness $\varsigma$ is at most   $\lceil\frac{3}{2}\max\{\mu,2\lambda\}\rceil$. 
\end{corollary}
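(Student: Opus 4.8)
The plan is to chain together the interval-thinness bound for bow metrics with the sharp hyperbolicity estimate available for modular graphs, so the whole argument is a bookkeeping composition of three earlier results. The starting observation is that a modular graph is, by definition, a weakly modular graph all of whose metric triangles have size $0$; in particular every metric triangle of $G$ has all three sides of length at most $q = 0$. This is exactly the hypothesis needed to invoke Proposition~\ref{prop:bow-int-thin} with $q = 0$, which then certifies that the intervals of $G$ are $p$-thin with $p \le \max\{\mu,\, q + 2\lambda\} = \max\{\mu,\, 2\lambda\}$.

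Second, I would feed this thinness bound into Proposition~\ref{prop:Median-hyp-th}, which asserts that a modular graph with $p$-thin intervals is exactly $\tfrac{p}{2}$-hyperbolic. Writing $p_0 := \tau(G)$ for the true interval thinness, we have $\delta = \tfrac{p_0}{2}$ and $p_0 \le \max\{\mu,\, 2\lambda\}$, so $\delta \le \tfrac12 \max\{\mu,\, 2\lambda\} = \max\{\tfrac{\mu}{2},\, \lambda\}$, which is the claimed hyperbolicity bound. It is worth emphasizing that Proposition~\ref{prop:Median-hyp-th} returns the exact value $\tfrac{p}{2}$ rather than merely an upper bound; this is precisely what makes the $q = 0$ contribution disappear and yields the clean expression $\max\{\tfrac{\mu}{2},\, \lambda\}$ (for pseudo-modular graphs the residual size-$1$ triangles are what force the extra $+1$ in Corollary~\ref{cor:Helly}).

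For the slimness, I would apply the inequality $\varsigma \le 3\delta + \tfrac12$ of Proposition~\ref{prop:sl-vs-hb} to the bound just obtained, giving $\varsigma \le \tfrac{3}{2} p_0 + \tfrac12$; the same estimate also drops out of Corollary~\ref{cor:delta-hyp-2nd-equidist} applied with the equilateral size-$0$ triangles and $q = 0$. The one genuine point of care is refining $\tfrac{3}{2} p_0 + \tfrac12$ to the stated ceiling. Here I would use that in an unweighted graph both the interval thinness $p_0$ and the slimness $\varsigma$ are non-negative integers: checking the two parity cases shows that for even $p_0 = 2m$ one gets $\lfloor \tfrac{3}{2} p_0 + \tfrac12 \rfloor = 3m = \lceil \tfrac{3}{2} p_0 \rceil$, and for odd $p_0 = 2m+1$ one gets $\lfloor \tfrac{3}{2} p_0 + \tfrac12 \rfloor = 3m + 2 = \lceil \tfrac{3}{2} p_0 \rceil$. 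Hence $\varsigma \le \lceil \tfrac{3}{2} p_0 \rceil \le \lceil \tfrac{3}{2}\max\{\mu,\, 2\lambda\}\rceil$ by monotonicity of the ceiling.

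I do not expect a real obstacle, since each step is a direct instantiation of a cited proposition; the only place that rewards attention is the final integrality refinement, where one must verify that rounding $3\delta + \tfrac12$ down agrees with $\lceil \tfrac{3}{2} p_0 \rceil$ in both parity cases rather than losing or gaining a unit.
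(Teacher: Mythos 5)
Your proposal is correct and follows essentially the same route as the paper: modular graphs have metric triangles of size $0$, so Proposition~\ref{prop:bow-int-thin} with $q=0$ gives intervals that are $\max\{\mu,2\lambda\}$-thin, Proposition~\ref{prop:Median-hyp-th} then yields $\delta\le\max\{\frac{\mu}{2},\lambda\}$, and the slimness bound follows from Proposition~\ref{prop:sl-vs-hb} (equivalently Corollary~\ref{cor:delta-hyp-2nd-equidist} with $q=0$). Your explicit parity check justifying the ceiling via integrality of the slimness is exactly the detail the paper leaves implicit when it calls the corollary immediate, and it is handled correctly.
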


The results of Corollary \ref{cor:Helly} and Corollary \ref{cor:median} show that the conjecture from \cite{alpha-hyperb} that {\em every  $\alpha_i$-metric graph is $\delta$-hyperbolic for some $\delta\le \frac{i+1}{2}$} is true within pseudo modular graphs (and, hence, for Helly graphs, modular graphs, and median graphs).  


\subsection{Bipartite graphs}
 Now we consider bipartite graphs. First, we show that bipartite graphs satisfying a ($1,\mu$)-bow metric are $\delta$-hyperbolic for some $\delta \le 3(\mu+1)/2+4$. Note that any graph (not necessarily bipartite) satisfying a ($0,\mu$)-bow metric (as they are $\alpha_{\mu}$-metric) are $\delta$-hyperbolic for some $\delta \le 3(\mu+1)/2$~\cite{alpha-hyperb}. Then, we conclude this subsection by showing that, if our conjecture (that ($\lambda,\mu$)-bow metric implies hyperbolicity) is true for all bipartite graphs or for all line graphs of bipartite graphs, then it is true for all graphs.  

We will need the following interesting lemma. Recall that the line graph $L(G)$ of a graph $G$ is a graph such that each vertex of $L(G)$ represents an edge of $G$ and two vertices of $L(G)$ are adjacent if and only if their corresponding edges in $G$ share a common endpoint (i.e., they are incident) in $G$.

\begin{lemma}\label{lem:bip-decrease-lambda}
    If $G$ is a bipartite graph satisfying a ($\lambda,\mu$)-bow metric for some $\lambda > 0$, then its line graph $L(G)$ satisfies $(\lambda-1,\mu+2)$-bow metric.
\end{lemma}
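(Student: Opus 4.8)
The plan is to pass through the standard description of distances in a line graph. For any two distinct edges $e,f$ of a connected graph, $d_{L(G)}(e,f)=1+\min\{d_G(a,b): a\in e,\ b\in f\}$; this is proved by reading a shortest $(e,f)$-path of $L(G)$ as a walk in $G$ through shared endpoints, and conversely by padding a shortest $(a,b)$-path of $G$ with the two edges $e,f$. Writing $D_{ef}:=\min\{d_G(a,b):a\in e,\ b\in f\}$ and applying this formula to the four edges $U,V,W,X$ of $G$ (vertices of $L(G)$) playing the roles of $u,v,w,x$, the hypotheses $V\in I_{L(G)}(U,W)$ and $W\in I_{L(G)}(V,X)$ become the two ``collinearity'' identities $D_{UW}=D_{UV}+D_{VW}+1$ and $D_{VX}=D_{VW}+D_{WX}+1$, the hypothesis $d_{L(G)}(V,W)>\lambda-1$ becomes $D_{VW}\ge\lambda-1$, and the desired conclusion reduces to the purely $G$-metric inequality $D_{UX}\ge D_{UV}+D_{VW}+D_{WX}-\mu$. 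Bipartiteness enters only through one elementary fact that I will use repeatedly: the two endpoints of any edge lie at distances differing by exactly $1$ from any fixed vertex.

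To prove the reduced inequality I would fix a pair $u\in U,\ x\in X$ realizing $D_{UX}=d_G(u,x)$; one must work with such a minimizing pair, since $D_{UX}$ is a minimum and the line-graph distance to be bounded from below is $1+D_{UX}$. The goal is then to produce $v\in V$ and $w\in W$ with $v\in I_G(u,w)$, $w\in I_G(v,x)$ and $d_G(v,w)>\lambda$; granting this, the $(\lambda,\mu)$-bow metric of $G$ gives $d_G(u,x)\ge d_G(u,v)+d_G(v,w)+d_G(w,x)-\mu$, and bounding each leg below by the corresponding minimum, namely $d_G(u,v)\ge D_{UV}$, $d_G(v,w)\ge D_{VW}$ and $d_G(w,x)\ge D_{WX}$, yields exactly $D_{UX}\ge D_{UV}+D_{VW}+D_{WX}-\mu$. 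To locate $v$ and $w$ I would lift the two collinearity identities: reading the shortest $(U,W)$-path of $L(G)$ through $V$ as a geodesic of $G$ shows that some geodesic of $G$ from an endpoint of $U$ to an endpoint of $W$ runs consecutively through the two endpoints of $V$, and symmetrically for $W$ between $V$ and $X$. Using the parity fact to select the endpoints of $V$ and of $W$ nearest to $u$ and to $x$, I would align these two lifted geodesics so that $v$ and $w$ sit on a common geodesic from $u$ to $x$, which delivers the two interval conditions.

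The step I expect to be the main obstacle is securing the strict inequality $d_G(v,w)>\lambda$. The lifted geodesics only guarantee $d_G(v,w)=D_{VW}+1$, which in the boundary case $D_{VW}=\lambda-1$ equals exactly $\lambda$ and so is too small to invoke the bow metric of $G$. Here bipartiteness must be exploited fully: the $\pm1$ parity across the edges $V$ and $W$ lets one replace $v$ or $w$ by the opposite endpoint to gain one extra unit, pushing the overlap to $D_{VW}+2>\lambda$ whenever the two edges are ``anti-aligned''. The genuinely delicate point is the ``aligned'' configuration, in which every endpoint pair of $V$ and $W$ is at distance $D_{VW}$ or $D_{VW}+1$; there one must instead combine the two collinearity identities with the minimality of the pair $(u,x)$ either to exclude this configuration or to argue the reduced inequality directly. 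This is precisely where the hypothesis $\lambda>0$ is used, and where the extra ``$+2$'' in the defect $\mu+2$ (rather than $\mu$) is spent. Once $v,w$ are fixed and $d_G(v,w)>\lambda$ is established, the bow metric of $G$ together with the distance formula translate everything back, showing that $L(G)$ satisfies $(\lambda-1,\mu+2)$-bow metric.
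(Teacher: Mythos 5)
Your setup is sound: the distance formula $d_{L(G)}(e,f)=1+\min\{d_G(a,b):a\in e,\ b\in f\}$ is correct, and so is the reduction of the lemma to the inequality $D_{UX}\ge D_{UV}+D_{VW}+D_{WX}-\mu$. But your proof stops exactly where the lemma's content begins. In what you call the \emph{aligned} configuration (both ``diagonal'' endpoint pairs of $V$ and $W$ at distance $D_{VW}$, the other two pairs at $D_{VW}+1$, which is perfectly possible in a bipartite graph, e.g.\ opposite edges of a $6$-cycle), with $D_{VW}=\lambda-1$ \emph{no} choice of endpoints $v\in V$, $w\in W$ satisfies $d_G(v,w)>\lambda$. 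So in that case your strategy is not merely delicate, it is unavailable: the bow metric of $G$ can never be invoked through endpoints of $V$ and $W$, and the only way forward is to prove that this configuration is incompatible with the hypotheses $V\in I_{L(G)}(U,W)$, $W\in I_{L(G)}(V,X)$. You offer no argument for that (``either exclude this configuration or argue the reduced inequality directly'' restates the problem rather than solving it). This exclusion is precisely what the paper's proof accomplishes: it lifts a geodesic of $L(G)$ from $U$ through $V$ to $W$ into a path of $G$, notes that its internal edges form a shortest path of $G$ from an endpoint $u$ of $U$ to the near endpoint $y'$ of $W$, rules out $d_G(u,y)=p-2$ for the far endpoint $y$ of $W$ by a shortcut argument in $L(G)$, and then uses bipartite parity to exclude $d_G(u,y)=p-1$, forcing $d_G(u,y)=p$; hence the \emph{whole} edge $W$ lies on a $(u,y)$-geodesic of $G$. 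Doing the same from the $X$ side, one concludes that $V$ and $W$ are the terminal edges of a shortest path of $G$ joining their far endpoints $v,w$, whence $d_G(v,w)=D_{VW}+2>\lambda$, and the two interval conditions $v\in I_G(u,w)$, $w\in I_G(v,x)$ hold for these same constructed endpoints.

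Two further design choices in your plan would also fail. First, insisting on a pair $(u,x)$ realizing $D_{UX}$ is both unobtainable and unnecessary: the hypotheses constrain geodesics among $U,V,W$ and among $V,W,X$ only, so nothing forces interval conditions relative to a minimizing pair for $D_{UX}$; the paper instead works with whatever endpoints the liftings produce and pays $1$ at the very end via $d_{L(G)}(U,X)\ge d_G(u,x)-1$, which is affordable exactly because the constructed overlap is $D_{VW}+2$ rather than $D_{VW}$. Second, your fallback of swapping an endpoint of $V$ or $W$ to gain one unit of overlap forfeits whatever interval conditions were arranged for the unswapped endpoints; the overlap bound and the interval conditions must be secured \emph{simultaneously}, for the same four vertices, which is what the lifting argument does. (Relatedly, your phrase about placing $v$ and $w$ on a common $(u,x)$-geodesic cannot be what is intended: that would assert the conclusion with defect $0$, which is precisely what a bow metric does not guarantee.) As it stands, the proposal is a correct translation of the statement plus an accurate diagnosis of the obstacle, but the obstacle itself---where bipartiteness and the hypothesis $\lambda>0$ actually do their work---is left unresolved.
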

\begin{proof}
    Consider four vertices $e_u,e_x,e_y,e_v$ of $L(G)$ such that $d_{L(G)}(e_x,e_y) = \lambda$ and $e_x \in I_{L(G)}(e_u,e_y)$, $e_y \in I_{L(G)}(e_x,e_v)$. 
    Fix a shortest path $P(e_u,e_y) = (e_u=e_0,e_1,\ldots,e_p=e_y)$ of $L(G)$ such that $e_{p-\lambda}=e_x$.
    For the unique end-vertices $u,y'$ in the intersections $e_u \cap e_1$, $e_{p-1}\cap e_y$ respectively, the internal edges $e_1,e_2,\ldots,e_{p-1}$ induce a shortest $(u,y')$-path of $G$.
    Write $e_y = y'y$.
    If $d_G(u,y) = p-2$, then we could replace in $P(e_u,e_y)$ the internal sequence $e_1,e_2,\ldots,e_{p-1}$ by the $p-2$ edges of a shortest $(u,y)$-path of $G$.
    The latter would contradict our assumption that $P(e_u,e_y)$ is a shortest path.
    Therefore, $d_G(u,y) \ge p-1$, which implies $d_G(u,y) = p$ because $G$ is bipartite.
    In particular, edges $e_x,e_y$ lie on a shortest $(u,y)$-path of $G$.    
    We obtain similarly the existence of end-vertices $v$ of $e_v$ and $x$ of $e_x$ such that $e_y,e_x$ lie on a shortest $(v,x)$-path of $G$.
    Doing so, $x \in I(u,y)$ and $y \in I(x,v)$.
    Furthermore, $y$ cannot be a closest to $x$ end-vertex of $e_y$, that is because edges $e_x,e_y$ lie on some shortest $(u,y)$-path of $G$.
    By symmetry, $x$ cannot be a closest to $y$ end-vertex of $e_x$.
    Altogether, it implies that $e_x,e_y$ are the terminal edges of some shortest $(x,y)$-path of $G$.
    Then, $d_G(x,y) \ge d_{L(G)}(e_x,e_y) + 1 = \lambda+1$.
    Since $G$ is $(\lambda,\mu)$-bow metric, $d_G(u,v) \ge d_G(u,x) + d_G(x,y) + d_G(y,v) - \mu = (d_{L(G)}(e_u,e_x)-1)+(\lambda+1)+(d_{L(G)}(e_y,e_v)-1) - \mu = d_{L(G)}(e_u,e_x) + \lambda + d_{L(G)}(e_y,e_v) - (\mu+1)$.
    Finally, $d_{L(G)}(e_u,e_v) \ge d_G(u,v)-1 \ge d_{L(G)}(e_u,e_x) + \lambda + d_{L(G)}(e_y,e_v) - (\mu+2)$.
\end{proof}

As a consequence, we get the following corollary. 

\begin{corollary}
    Bipartite graphs satisfying a ($1,\mu$)-bow metric are $\delta$-hyperbolic for some $\delta \le 3(\mu+1)/2+4$. 
\end{corollary}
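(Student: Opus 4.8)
The plan is to reduce the statement to known facts about $\alpha_i$-metric graphs by passing to the line graph $L(G)$, and then to transfer the resulting hyperbolicity bound back from $L(G)$ to $G$. First I would apply Lemma~\ref{lem:bip-decrease-lambda} with $\lambda=1$: since $G$ is bipartite and satisfies $(1,\mu)$-bow metric, $L(G)$ satisfies $(1-1,\mu+2)=(0,\mu+2)$-bow metric. As recalled at the start of Section~\ref{Sec:graphclasses}, the graphs satisfying $(0,i)$-bow metric are exactly the $\alpha_i$-metric graphs, so $L(G)$ is $\alpha_{\mu+2}$-metric.

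Next I would invoke the bound of~\cite{alpha-hyperb} quoted in the introduction, namely that every $\alpha_i$-metric graph is $\delta$-hyperbolic for some $\delta\le\frac32(i+1)$. With $i=\mu+2$ this gives $\delta(L(G))\le\frac32(\mu+3)=\frac32(\mu+1)+3$. It then remains to compare the hyperbolicity of $G$ with that of $L(G)$: combined with the target bound $\frac32(\mu+1)+4$, what I need to prove is precisely $\delta(G)\le\delta(L(G))+1$.

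For this comparison I would use the exact distance formula $d_{L(G)}(e,f)=1+\min_{p\in e,\,q\in f}d_G(p,q)$ for distinct edges. Choosing, for every vertex $a$, an incident edge $e_a$, this yields $|d_{L(G)}(e_a,e_b)-d_G(a,b)|\le 1$, so $a\mapsto e_a$ is a rough isometry of additive defect $1$; plugging $e_u,e_v,e_w,e_x$ into the four-point condition for the $\delta(L(G))$-hyperbolic graph $L(G)$ and replacing each $L$-distance by the corresponding $G$-distance shows that $G$ is $(\delta(L(G))+c)$-hyperbolic for some constant $c$. To get the sharp value $c=1$ I would exploit the freedom to pick the incident edges depending on the four-tuple together with the parity of distances forced by bipartiteness. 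Concretely, for the pairing $\{u,v\},\{w,x\}$ realizing the largest $G$-distance sum $S_1$ (these two pairs being \emph{disjoint}, hence decoupled), one orients the edges "outward" so that $d_{L(G)}(e_u,e_v)\ge d_G(u,v)$ and $d_{L(G)}(e_w,e_x)\ge d_G(w,x)$, while the formula always gives $d_{L(G)}(e_a,e_b)\le d_G(a,b)+1$ for the other two pairings. Hence the largest of the three $L$-sums is $\ge S_1$ and the second largest is at most $S_2+2$, where $S_2$ is the second largest $G$-sum; the four-point inequality $\mathrm{largest}-\mathrm{second}\le 2\delta(L(G))$ then forces $S_1-S_2\le 2\delta(L(G))+2$, i.e. $\delta(G)\le\delta(L(G))+1$.

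The hard part is exactly obtaining the additive constant $c=1$ rather than the $c=2$ produced by the naive rough-isometry estimate. The only delicate point is the degenerate configuration in which neither endpoint of a pair admits an outward incident edge (so the orientation trick forces $d_{L(G)}(e_u,e_v)=d_G(u,v)-1$); I expect such configurations to be "tree-like," where the hyperbolicity is already small, and hence to be absorbable without loss, but cleanly verifying this — or else replacing the last step by a citation of the known line-graph/hyperbolicity comparison — is where the genuine work lies. Putting the three steps together gives $\delta(G)\le\frac32(\mu+1)+3+1=\frac32(\mu+1)+4$, as required.
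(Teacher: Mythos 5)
Your overall route is the paper's own: apply Lemma~\ref{lem:bip-decrease-lambda} with $\lambda=1$ so that $L(G)$ satisfies $(0,\mu+2)$-bow metric, i.e.\ is $\alpha_{\mu+2}$-metric; invoke the bound $\delta\le\frac{3}{2}(i+1)$ of \cite{alpha-hyperb} to get $\delta(L(G))\le\frac{3}{2}(\mu+1)+3$; and transfer back via $\delta(G)\le\delta(L(G))+1$. The paper disposes of the transfer step by citation: that inequality is exactly \cite[Theorem~6]{CoDu16}, the same result it later invokes in the proof of Theorem~\ref{th:reduce}. You instead try to prove the transfer inequality from scratch, and that is where your proposal has a genuine gap.

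Your orientation argument is fine when ``outward'' edges exist: if $u$ has a neighbor $u'$ with $d_G(u',v)=d_G(u,v)+1$, and symmetrically for $v$, $w$, $x$, then indeed $d_{L(G)}(e_u,e_v)=d_G(u,v)+1$, the $L$-sum of the chosen pairing dominates the other two, and your comparison even yields $\delta(G)\le\delta(L(G))$ on such quadruples. But in the configuration you flag as degenerate --- every neighbor of $u$ strictly closer to $v$ and vice versa, and likewise for the pair $w,x$ --- the best available guarantee is $d_{L(G)}(e_u,e_v)\ge d_G(u,v)-1$, and the computation degrades to $\delta(G)\le\delta(L(G))+2$, one unit worse than the corollary needs. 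Moreover, your hope that these configurations are ``tree-like, where the hyperbolicity is already small'' is false: in the $(n\times n)$-rectilinear grid (a bipartite graph of hyperbolicity $n$), the quadruple formed by the four corners realizes the hyperbolicity, and every corner has all of its neighbors strictly closer to the opposite corner. So the fully degenerate case occurs precisely at hyperbolicity-realizing quadruples of graphs with arbitrarily large hyperbolicity and cannot be discarded; handling it is the actual content of \cite[Theorem~6]{CoDu16}. Your stated fallback --- citing the known line-graph/hyperbolicity comparison --- is exactly what the paper does, and with that citation your argument coincides with the paper's proof; as a self-contained argument, however, the last step does not go through.
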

\begin{proof}
    Let $G$ be any bipartite ($1,\mu$)-bow metric graph.
    By Lemma~\ref{lem:bip-decrease-lambda}, its line graph $L(G)$ is $\alpha_{\mu+2}$-metric.
    Then, $L(G)$ is $\delta'$-hyperbolic, for some $\delta' \le 3(\mu+1)/2 + 3$~\cite{alpha-hyperb}.
    The latter implies that $G$ is $\delta$-hyperbolic, for some $\delta \le \delta'+1 \le 3(\mu+1)/2 + 4$~\cite{CoDu16}.
\end{proof}

The case of bipartite graphs satisfying a ($\lambda,\mu$)-bow metric
for $\lambda\ge 2$ is open. In fact, as it follows from the proof of Theorem \ref{th:reduce} below, if our conjecture is true for all bipartite graphs, then it is true for all graphs as well. 

We can show the following interesting result. It also reduces our conjecture from general graphs to the line graphs of bipartite graphs. 

\begin{theorem} \label{th:reduce}  
A graphs $G$ satisfying a ($\lambda,\mu$)-bow metric has hyperbolicity at most $f(\lambda,\mu)$ if and only if the line graph $L(H)$ of its $1$-subdivision $H$ has hyperbolicity at most $O(f(\lambda,\mu))$.
\end{theorem}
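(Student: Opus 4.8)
The plan is to prove the equivalence by chaining two transformations whose metric effects on bow-metric parameters and on hyperbolicity are controlled by results already established in the excerpt. The two gadgets are the $1$-subdivision $\Sigma(\cdot)=H$ and the line graph $L(\cdot)$, and the key insight is that both of these transformations distort hyperbolicity only by constant additive amounts while keeping bow-metric parameters bounded.

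First I would handle the forward direction. Assume $G$ satisfies a $(\lambda,\mu)$-bow metric and has hyperbolicity at most $f(\lambda,\mu)$. By Lemma~\ref{lm:1-subdiv}, its $1$-subdivision $H=\Sigma(G)$ satisfies a $(2\lambda+2,2\mu+2)$-bow metric. The graph $H$ is bipartite (a $1$-subdivision of any graph is bipartite), so I can apply Lemma~\ref{lem:bip-decrease-lambda} repeatedly to its line graph: each application of the lemma to a bipartite graph with positive first bow-metric parameter decreases that parameter by one while increasing the second by two. However, for the theorem I only need the behavior of hyperbolicity under these operations, not of the bow-metric parameters. The hyperbolicity of $H$ is within a constant of the hyperbolicity of $G$ (subdividing each edge roughly doubles distances, and it is standard that $\delta(\Sigma(G))$ and $\delta(G)$ differ by at most a constant factor), and the hyperbolicity of $L(H)$ differs from that of $H$ by at most an additive constant by the result of~\cite{CoDu16} already invoked in the corollary above. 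Hence $\delta(L(H)) = O(\delta(H)) = O(f(\lambda,\mu))$, giving one direction.

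For the converse, I would argue contrapositively in terms of bow-metric parameters: the point is that knowing a hyperbolicity bound for $L(H)$ forces one for $G$. Given that $G$ satisfies $(\lambda,\mu)$-bow metric, I first pass to $H=\Sigma(G)$, which satisfies $(2\lambda+2,2\mu+2)$-bow metric by Lemma~\ref{lm:1-subdiv} and is bipartite. Applying Lemma~\ref{lem:bip-decrease-lambda} to $L(H)$ relates the bow metric of $L(H)$ to that of $H$, so $L(H)$ satisfies a $(\lambda',\mu')$-bow metric with $\lambda',\mu'$ bounded in terms of $\lambda,\mu$. If $L(H)$ has hyperbolicity at most $O(f(\lambda,\mu))$, then since $\delta(H)\le \delta(L(H))+O(1)$ and $\delta(G)$ is within a constant factor of $\delta(H)$, we recover $\delta(G)=O(f(\lambda,\mu))$. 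Thus the hyperbolicity bounds for $G$ and for $L(H)$ imply one another up to the claimed $O(\cdot)$ distortion.

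The main obstacle I anticipate is pinning down the precise quantitative relationship $\delta(\Sigma(G)) \asymp \delta(G)$ in both directions with explicit constants, since the definition of hyperbolicity via the four-point condition interacts with edge-subdivision in a way that rescales distances non-uniformly on the new degree-two vertices; one must verify that the two larger Gromov distance sums on $\Sigma(G)$ track those on $G$ up to an additive constant rather than drifting. The line-graph step is cleaner because the cited bound $\delta(L(H)) \le \delta(H)+1$ (and the reverse inequality up to a constant) from~\cite{CoDu16} is quoted directly, but care is still needed to check that the $O(\cdot)$ hidden in the statement genuinely absorbs the accumulated additive and multiplicative slack from both transformations, so that the equivalence of ``at most $f(\lambda,\mu)$'' with ``at most $O(f(\lambda,\mu))$'' is consistent in both directions.
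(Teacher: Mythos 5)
Your proposal is correct and takes essentially the same route as the paper: chain $G \to H=\Sigma(G) \to L(H)$ and transfer hyperbolicity bounds via the subdivision relation $\delta(G)\le \delta(H)/2$ and the line-graph bound $\delta(H)\le \delta(L(H))+1$ from~\cite{CoDu16}, exactly the two facts the paper cites. The only differences are cosmetic: the paper writes out only the backward direction (a bound on $\delta(L(H))$ yields one on $\delta(G)$), leaving the forward direction to the same standard inequalities you invoke, and your observation that the bow-metric propagation lemmas (Lemma~\ref{lm:1-subdiv} and Lemma~\ref{lem:bip-decrease-lambda}) are not needed for the hyperbolicity equivalence itself, but rather for the ensuing corollary about reducing the conjecture to line graphs of bipartite graphs, is accurate.
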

\begin{proof} Let $G$ be a graph satisfying a ($\lambda,\mu$)-bow metric.  
    By Lemma \ref{lm:1-subdiv}, the $1$-subdivision $H$ of $G$ satisfies ($2\lambda+2,2\mu+2$)-bow metric. By Lemma \ref{lem:bip-decrease-lambda}, the line graph $L(H)$ satisfies $(2\lambda+1,2\mu+4)$-bow metric. 
    
    Now, if the hyperbolicity of $L(H)$ is at most $\delta''=g(\lambda,\mu)$ (for some function $g(\cdot,\cdot)$), then $H$ is $\delta'$-hyperbolic for some $\delta'\le \delta''+1$ (see ~\cite[Theorem 6]{CoDu16}) and, hence,  $G$ is $\delta$-hyperbolic for some $\delta\le \delta'/2$ (see ~\cite[page 194]{CoDu16}). 
\end{proof}

\begin{corollary}
    If our conjecture (that ($\lambda,\mu$)-bow metric implies hyperbolicity) is true for all bipartite graphs or for all line graphs of bipartite graphs, then it is true for all graphs.  
\end{corollary}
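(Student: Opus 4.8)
The plan is to reduce the general case to the two hypothesized special cases by exploiting the single structural fact that the $1$-subdivision $\Sigma(G)$ of \emph{any} graph $G$ is bipartite (each original edge becomes a path of length two, so $\Sigma(G)$ contains no odd cycle). Concretely, I would read the conjecture as the statement that for every pair $\lambda,\mu \ge 0$ there is a bound $f(\lambda,\mu)$ on the hyperbolicity of every graph in the class under consideration that satisfies $(\lambda,\mu)$-bow metric, and I would argue separately under each of the two hypotheses.

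First, suppose the conjecture holds for all bipartite graphs. Given an arbitrary graph $G$ satisfying $(\lambda,\mu)$-bow metric, I would invoke Lemma~\ref{lm:1-subdiv} to conclude that $H := \Sigma(G)$ satisfies $(2\lambda+2,\,2\mu+2)$-bow metric. Since $H$ is bipartite, the hypothesis yields a bound $\delta' \le f(2\lambda+2,\,2\mu+2)$ on the hyperbolicity of $H$. Finally, I would transfer this bound back to $G$ using the known relation between a graph and its $1$-subdivision, namely $\delta(G) \le \delta(\Sigma(G))/2$ (the same step used at the end of the proof of Theorem~\ref{th:reduce}, via~\cite{CoDu16}), giving hyperbolicity of $G$ at most $f(2\lambda+2,\,2\mu+2)/2$, a bound depending only on $\lambda$ and $\mu$.

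Second, suppose instead that the conjecture holds for all line graphs of bipartite graphs. Here I would chain Lemma~\ref{lm:1-subdiv} and Lemma~\ref{lem:bip-decrease-lambda}: starting from $G$, the subdivision $H = \Sigma(G)$ is bipartite and satisfies $(2\lambda+2,\,2\mu+2)$-bow metric, and then $L(H)$ satisfies $(2\lambda+1,\,2\mu+4)$-bow metric. Since $L(H)$ is the line graph of the bipartite graph $H$, the hypothesis bounds its hyperbolicity by some $\delta''$ depending only on $\lambda,\mu$. This is precisely the situation addressed by Theorem~\ref{th:reduce}, whose reverse direction then yields a bound on the hyperbolicity of $G$ of order $O(\delta'')$; alternatively one applies the two transfer steps from the proof of Theorem~\ref{th:reduce} directly ($\delta(H) \le \delta''+1$ and $\delta(G) \le \delta(H)/2$).

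There is no serious obstacle here, as all the required machinery has already been assembled; the proof is essentially a bookkeeping argument resting on the bipartiteness of $\Sigma(G)$. The one point demanding care is the quantifier structure: one must ensure the conjecture is stated uniformly over all $(\lambda,\mu)$, so that applying it to the shifted parameters $(2\lambda+2,\,2\mu+2)$ in the bipartite case, or to $(2\lambda+1,\,2\mu+4)$ after the line-graph step, is legitimate and still produces a bound that is a function of the original $\lambda,\mu$ alone. Beyond that, it suffices to track the constants incurred by subdivision and by the line-graph construction, all of which are explicitly available from Lemma~\ref{lm:1-subdiv}, Lemma~\ref{lem:bip-decrease-lambda}, Theorem~\ref{th:reduce}, and~\cite{CoDu16}.
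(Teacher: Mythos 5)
Your proposal is correct and follows essentially the same route as the paper: the paper derives this corollary from Theorem~\ref{th:reduce}, whose proof chains Lemma~\ref{lm:1-subdiv} (the bipartite subdivision $\Sigma(G)$ inherits a $(2\lambda+2,2\mu+2)$-bow metric), Lemma~\ref{lem:bip-decrease-lambda} (its line graph inherits a $(2\lambda+1,2\mu+4)$-bow metric), and the hyperbolicity transfer bounds from~\cite{CoDu16}, exactly as you do. Your added remark about the uniform quantification over $(\lambda,\mu)$ is a fair point of care, but it introduces no divergence from the paper's argument.
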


\section{Conclusion}
We conjectured that, in graphs, a ($\lambda,\mu$)-bow metric implies hyperbolicity and showed that our conjecture is true for several large families of graphs. If there is a counterexample (non-hyperbolic) graph to our conjecture, it must satisfy a ($\lambda,\mu$)-bow metric but have unbounded side lengths of metric triangles  and an unbounded interval thinness. The question whether ($\lambda,\mu$)-bow metric implies hyperbolicity in all graphs remains  as a main open question. It is sufficient to answer it for bipartite graphs or for line graphs of bipartite graphs.  We demonstrated also that many known in literature graph classes satisfy a $(\lambda,\mu)$-bow metric for some bounded values of $\lambda$ and $\mu$. Another remaining open question is whether some other  interesting families of graphs satisfy $(\lambda,\mu)$-bow metrics for some small values of $\lambda$ and $\mu$.


\end{document}